\documentclass[11pt,a4paper]{article}
\pdfoutput=1

\RequirePackage{amsmath}%
\usepackage{amsfonts}%
\usepackage{amssymb}%
\usepackage{appendix}
\usepackage{lineno}
\usepackage{color}%
\usepackage{url}%
\usepackage[hidelinks]{hyperref}
\usepackage{floatflt,graphicx,graphics,subfig}
\usepackage{mathdots}
\usepackage{algpseudocode}
\usepackage{algorithm}

\newtheorem{theorem}{Theorem}

\newtheorem{example}{Example}

\newtheorem{remark}{Remark}

\let\Re\relax
\DeclareMathOperator{\Re}{\mathrm{Re}}
\let\Im\relax
\DeclareMathOperator{\Im}{\mathrm{Im}}

\renewcommand{\i}{\mathrm{i}}
\newcommand{\bI}{{\bf I}}
\newcommand{\bM}{{\bf M}}
\newcommand{\bN}{{\bf N}}

\newcommand{\bK}{{\bf K}}
\newcommand{\bD}{{\bf D}}
\newcommand{\bE}{{\bf E}}

\newcommand{\DD}{{\mathbb D}}

\newcommand{\bt}{{\bf t}}

\newcommand{\bx}{{\bf x}}

\newcommand{\bvb}{{\bf b}}
\newcommand{\bc}{{\bf c}}
\newcommand{\bn}{{\bf n}}
\newcommand{\bT}{{\bf T}}

\newcommand{\Null} {{\rm Null}}

\renewcommand{\span}{\mathrm{span}}

\begin{document}
	
\title{A boundary integral equation method for Steklov eigenvalue problems for smooth planar domains}

\author{Jamie Swan$^{\rm a}$, Mohamed M.S. Nasser$^{\rm a}$, Harri Hakula$^{\rm b}$ \&  Matti Vuorinen$^{\rm c}$}
	
\date{}
\maketitle
 	
\vskip-0.7cm %
\centerline{$^{\rm a}$Department of Mathematics, Statistics \& Physics, Wichita State University,} %
\centerline{Wichita, KS 67260-0033, USA}%
\centerline{\tt jmswan@shockers.wichita.edu, mms.nasser@wichita.edu}%

\vskip+0.1cm %
\centerline{$^{\rm b}$Aalto University, Department of Mathematics and System Analysis,}
\centerline{P.O. Box 11100, FI–00076 Aalto, Finland}
\centerline{\tt harri.hakula@aalto.fi}           %  \\
        
\vskip+0.1cm %				
\centerline{$^{\rm c}$Department of Mathematics and Statistics, University of Turku,} 
\centerline{FI-20014 Turku, Finland}
\centerline{\tt vuorinen@utu.fi}

\begin{abstract}

In this paper, we study the computational question of whether the Steklov
spectrum of smooth simply connected planar domains can be approximated
accurately by a boundary-only formulation based on harmonic conjugation. For the
unit disk, the Dirichlet-to-Neumann operator can be written explicitly in terms
of the classical conjugation operator. We show how this viewpoint extends to
general bounded and unbounded simply connected domains through the generalized
conjugation operator 
defined through the boundary integral equation with the generalized Neumann kernel. 
Combined with Fourier differentiation on an equidistant boundary grid, this leads to a
dense algebraic eigenvalue problem for the boundary traces of Steklov
eigenfunctions. The resulting method uses only boundary data, treats interior
and exterior problems in a unified way, and reconstructs eigenfunctions in the
domain by harmonic extension. Numerical experiments on benchmark domains and on
parameter-dependent smooth families, including ellipses and star-like curves,
show high accuracy for smooth boundaries and illustrate how the Steklov spectrum
changes with geometry.

\end{abstract}

\begin{center}
\begin{quotation}
{\noindent {{\bf Keywords}.\;\; Steklov eigenvalues, Dirichlet-to-Neumann operator, boundary integral equations, generalized Neumann kernel, harmonic conjugation, spectral geometry}%
}%
\end{quotation}
\end{center}

\begin{center}
\begin{quotation}
{\noindent {{\bf MSC 2020}.\;\; 65N25, 35P15, 65N38, 30E25, 65E05}%
}%
\end{quotation}
\end{center}

%%%%%%%%%%%%%%%
%%%%%%%%%%%%%%%
%%%%%%%%%%%%%%%

%-------------------------------------------------------------------
\section{Introduction}

The Steklov eigenvalue problem for the Laplacian on a planar domain
$G \subset \mathbb{C}$ consists of finding nontrivial pairs $(u,\lambda)$
of $C^2$-functions $u:G \to \mathbb{R}$  and real numbers $\lambda$ such that
\begin{equation}\label{stekProb}
\Delta u = 0 \quad \text{in } G, \qquad
\frac{\partial u}{\partial \bn} = \lambda u \quad \text{on } \Gamma = \partial G,
\end{equation}
where $\frac{\partial u}{\partial\bn}$ is the exterior normal derivative. 
This paper treats simply connected planar domains only.
The eigenvalues satisfy
\[
0 = \lambda_0 < \lambda_1 \le \lambda_2 \le \cdots \to \infty
\]
and define the Steklov spectrum of the domain $G$.
For both bounded and unbounded simply connected domains $G$, the Steklov eigenvalues have the following asymptotic behavior
\begin{equation}\label{eq:asym_b}
\lambda_{2k-1} = \frac{2\pi k}{|\Gamma|} + \epsilon_k, \quad \lambda_{2k} = \frac{2\pi k}{|\Gamma|} + \epsilon_k'
\end{equation}
where $|\Gamma|$ is the length of the boundary $\Gamma$ and $\epsilon_k,\epsilon_k'\to0$ as $k\to \infty$~\cite{Bun,Dit}.

An equivalent formulation to \eqref{stekProb} is obtained through the Dirichlet-to-Neumann (DtN)
operator, which maps boundary Dirichlet data to the corresponding Neumann
traces of its harmonic extension. The Steklov eigenvalues coincide with the
spectrum of this operator. The problem is therefore a boundary spectral problem
for a first-order pseudodifferential operator. Dirichlet-to-Neumann maps are
standard objects in elliptic boundary value theory and play a central role in
spectral geometry and inverse boundary problems \cite{HofmannEgert,kuz,LeeUhlmann,Lee,TaylorPDE}.
Physically, the Steklov problem models systems where the dynamics are
driven by interactions at the interface. In the theory of heat
conduction, it describes the cooling of a solid body immersed in a fluid
bath at zero temperature, where the heat flux across the boundary is
proportional to the temperature difference, and the cooling rate is the
eigenvalue. In fluid mechanics, specifically in the study of sloshing,
the Steklov eigenvalues correspond to the natural frequencies of the
free surface of an ideal fluid contained in a vessel. The boundary
condition in this context represents the kinematic and dynamic
constraints at the free surface under the influence of gravity.

Sharp analytical results are available for Steklov eigenvalues under geometric
constraints. For simply connected planar domains with fixed perimeter, the disk
maximizes the first nonzero Steklov eigenvalue \cite{Henrot,Weinstock}. More general
inequalities of Hersch--Payne--Schiffer type relate low-order Steklov eigenvalues
to boundary length and area \cite{PolterovichSurvey,HerschPayneSchiffer}. These results indicate a strong
dependence of the Steklov spectrum on boundary geometry.

The Steklov problem is well suited to
boundary-based discretizations, since both the governing equation and the
spectral condition are imposed on the boundary. For the unit disk, the DtN
operator admits an explicit representation in terms of the classical conjugation operator
(or Hilbert transform) $\bK$, leading to closed-form expressions for Steklov
eigenvalues and eigenfunctions~\cite{Duren}. Extending this structure to general
simply connected domains requires a more general operator formulation.

The central question addressed in this paper is: Can the
Dirichlet-to-Neumann operator for a smooth simply connected domain be
approximated directly on the boundary, without volumetric discretization, by
combining harmonic conjugation with periodic differentiation. In this work, we
develop such a boundary integral equation (BIE) method for bounded and unbounded simply
connected planar domains with smooth boundaries. The method is based on the
generalized conjugation operator $\bE$, which extends the classical conjugation
operator $\bK$ from the unit disk to general domains and provides explicit boundary
relations for harmonic conjugates via integral equations with well-behaved
kernels \cite{Nas-E,Weg-Nas}. Combined with Fourier-based differentiation, this
framework provides a direct representation of the Dirichlet-to-Neumann operator
on the boundary.
The resulting formulation leads to an algebraic eigenvalue problem whose
spectrum approximates the Steklov spectrum. For analytic boundaries, the
numerical results reported below indicate rapid convergence. The method applies
to both interior and exterior domains and avoids volumetric discretization. In
addition to eigenvalues, the approach produces boundary traces and interior
harmonic extensions of Steklov eigenfunctions.

We first analyze the unit disk, where the resulting formulas can be checked
against the exact Steklov spectrum, and then extend the construction to general
simply connected domains through the operator $\bE$. This allows the main
scientific question to be answered in a clean progression: the disk provides the
model calculation, the generalized conjugation operator provides the analytical
extension, and the numerical examples show how the resulting method behaves for
interior and exterior smooth geometries.

We assume that the boundary $\Gamma=\partial G$ is a smooth Jordan curve
with positive orientation, i.e., $\Gamma$ is oriented so that $G$ is always on the left of $\Gamma$. Hence, $\Gamma$ has a counterclockwise orientation for bounded $G$ and clockwise
orientation for unbounded $G$.
The boundary curve is parametrized by a
$2\pi$-periodic complex function $\eta(t)$ with $0\leq t \leq 2\pi$. We
assume $\eta(t)$ is twice continuously differentiable and the first
derivative  $\eta'(t) \neq 0$~\cite{Weg-Nas}. Furthermore, assuming the
boundary is smooth will ensure accuracy of the numerical calculations~\cite{Alhe,Weg}.

A given  harmonic function $u(z)$, where $z=x+\i y\in G$, can be considered as the real part of an analytic function $f(z)$ in $G$. 
We establish the relationship between $f(z)$ and the outward normal derivative of the function $u(z)$. 
The complex unit tangent vector to the smooth Jordan curve $\Gamma$ is $\eta'(t)/|\eta'(t)|$ for $0 \leq t \leq 2\pi$. 
Since $\Gamma$ has a positive orientation, the outward unit normal vector is then $\bn(\eta(t)) = -\i\eta'(t)/|\eta'(t)|= e^{\i \theta (\eta (t))}$ where $\theta (\eta (t))$ is the angle between the positive real axis and the outward normal vector $\bn (t)$, $0 \leq t \leq 2\pi$. 
The outward normal derivative of $u(z)$ is then given by 
$$\partial u/ \partial \bn = \nabla u \cdot \bn =  
u_x \cos \theta +  u_y \sin \theta  = \Re [e^{\i \theta}(u_x-\i u_y)] \,.$$ 
By the Cauchy-Riemann equations, $f'(z) = u_x(z)-\i u_y(z)$, so
\begin{equation}\label{eq:par-u-n} 
\left.\frac{\partial u}{\partial \bn}\right|_{\eta(t)} =
\Re\left[\frac{-\i\eta'(t)}{|\eta'(t)|}f'(\eta(t))\right], \quad
0 \leq t \leq 2\pi. 
\end{equation}

Let  $H$ denote the space of all H\"older continuous $2\pi$-periodic real functions.
In view of the smoothness of the parametrization $\eta(t)$ of the boundary $\Gamma$, H\"older continuous functions $\hat\gamma$ on the boundary $\Gamma$ can be interpreted via $\gamma(t)=\hat\gamma(\eta(t))$, $0\le t\le 2\pi$, also as H\"older continuous functions $\gamma$ of the parameter $t$ and vice versa.
Given a function $\gamma\in H$, it is known that a unique harmonic function $u$ in the domain $G$ exists such that $u(\eta(t))=\gamma(t)$, $0\le t\le 2\pi$.
The Dirichlet-to-Neumann map of the function $\gamma(t)$ is then $\bT\gamma(t)=\left.\frac{\partial u}{\partial \bn}\right|_{\eta(t)}$ where $\bT$ denotes the Dirichlet-to-Neumann operator. The Steklov eigenvalue problem can be then written as $\bT\gamma=\lambda\gamma$ and hence the Steklov eigenvalues coincide with the spectrum of the operator $\bT$.

Finally, it is worth mentioning that the Dirichlet-to-Neumann map can be computed using the integral equation with the {\it adjoint} generalized Neumann kernel~\cite{Nas-adj}. However, in this paper, we will use the integral equation with the generalized Neumann kernel which will lead to an elegant form of the Dirichlet-to-Neumann operator $\bT$ as shown in equations~\eqref{eq:DtN-K} and~\eqref{eq:DtN-E} below.

\subsection{Related Work}

The present work lies at the intersection of Steklov spectral geometry,
boundary integral equations, and numerical conformal mappings. On the analytical
side, the Steklov problem has been studied extensively through its relation to
the Dirichlet-to-Neumann operator, isoperimetric inequalities, and inverse
boundary questions. For simply connected planar domains, classical results such
as the Weinstock inequality and the Hersch--Payne--Schiffer inequalities show
that the low-order Steklov spectrum is strongly constrained by boundary
geometry~\cite{PolterovichSurvey,Henrot,HerschPayneSchiffer,Weinstock}. These
results motivate the numerical study of how the spectrum changes under smooth
deformations of the boundary.

From the computational point of view, Steklov eigenvalues have been approximated
by a range of finite element, boundary element, and conformal-mapping-based
methods. The work of Alhejaili and Kao~\cite{Alhe} is particularly relevant for
the smooth simply connected setting considered here, since it also exploits the
interaction between harmonic functions and complex analysis. Our approach is
different in that it uses the generalized conjugation operator directly on the physical boundary, leading to a
boundary-only formulation that treats bounded and unbounded domains in the same
operator framework. The method developed here should be viewed as complementary to finite element
and high-order boundary element approaches rather than as a universal
replacement. Finite element methods are flexible with respect to geometry,
coefficients, and lower-regularity boundaries, and they remain the natural tool
when one needs volumetric discretization or local mesh refinement. High-order
boundary element methods retain the boundary-only character of Laplace problems
while also accommodating a broader range of geometries and approximation spaces
\cite{BabuskaOsborn,SauterSchwab,SchwabBook}.

The operator-theoretic ingredients come from the theory of the boundary integral equation with the generalized
Neumann kernel and the generalized conjugation operator developed in
\cite{Nas-E,Nas-AMC,Weg-Nas}. Those works provide the analytic machinery needed
to recover harmonic conjugates from boundary data on smooth domains. The present
paper combines that machinery with the Steklov boundary condition and Fourier
differentiation to obtain a practical eigenvalue solver for smooth interior and
exterior Steklov problems.
The method uses
dense matrices in its current form, and is not intended in this paper to address
polygonal or corner singularities. Those issues require a different analytical
and numerical treatment and are therefore outside the scope of the present
smooth-boundary study.

\subsection{Organization}

Section~\ref{sec:disk} treats the unit disk and uses the classical conjugation
operator $\bK$ to derive the model eigenvalue formulation in a setting where the exact
Steklov spectrum is known. Section~\ref{sec:E} introduces the generalized
conjugation operator $\bE$ for smooth simply connected domains.
Section~\ref{sec:sim} combines $\bE$ with the Steklov boundary condition to
derive the algebraic eigenvalue problem used in computation. The numerical
examples in Section~\ref{sec:ex} first validate the method on benchmark
geometries, then examine how the spectrum varies in smooth
parameter-dependent families, discuss computational complexity, and conclude
with an independent comparison against an available $hp$-FEM solver.
Section~\ref{sec:con} concludes with the main computational message of the
paper, and the appendix records the differentiation matrix used in the
discretization.

\section{Computing eigenvalues for the unit disk}\label{sec:disk}

We begin with the unit disk, where the Steklov spectrum is
known exactly, to establish the method in a setting that admits direct
error verification before extending it to general simply connected
domains with smooth boundaries.

The Steklov eigenvalue problem for the unit disk is stated as
\begin{equation}\label{eq:st-1}
		\Delta u= 0 \quad {\rm in} \quad \DD; \quad\frac{\partial u}{\partial n}= 
		\lambda u \quad {\rm on}\,\,\, C=\partial \DD.
	\end{equation}
The Steklov eigenvalues of the unit disk, found by using separation of variables method, 
are $0, 1, 1, 2, 2, \ldots , k, k,\ldots$. The eigenfunction for $\lambda _0 = 0$ is  a constant function, while for eigenvalues $\lambda_{2k} = \lambda_{2k-1} = k$, where $k \geq 1$ have the corresponding eigenfunctions, $u_{2k} = r^k \cos (k \theta )$ and $u_{2k-1} = r^k \sin (k\theta )$~\cite{Alhe}.  

The harmonic function $u(z)$ can be considered as a real part of an analytic function $f(z)$ in the unit disk $\DD$. On the unit circle with the parametrization $\eta(t)=e^{\i t}$ for $0\le t\le 2\pi$, we assume that 
\begin{equation}\label{eq:f-bdv-1}
	f(\eta(t))=\gamma(t)+\i\mu(t).
\end{equation}
Thus the value of the real part of $f(\eta (t))$ on the unit circle is $\gamma(t)=u(\eta(t))$. 
By taking the derivative of both sides of equation~\eqref{eq:f-bdv-1} with respect to the parameter $t$, we obtain
\begin{equation}\label{eq:f'-bdv-1}
    \eta'(t) f'(\eta(t)) = \gamma'(t)+\i\mu'(t).
\end{equation}
Note that $|\eta'(t)|=1$. Thus using~\eqref{eq:par-u-n} with the parametrization of the unit circle and using the imaginary part of equation~\eqref{eq:f'-bdv-1}, we have
\begin{equation}\label{eq:partial-1}
   \left.\frac{\partial u}{\partial n}\right|_{\eta(t)} =\Re\left[\frac{-\i\eta'(t)}{|\eta'(t)|}f'(\eta(t))\right]
	= \Im[\eta'(t) f'(\eta(t))] = \mu'(t).
\end{equation}
Let $\bD$ be the differentiation operator with respect to the parameter $t$, i.e., $\bD$ is defined by $\bD\mu(t) = \mu'(t)$. Then, in view of~\eqref{eq:partial-1}, the boundary condition in~\eqref{eq:st-1} implies
\begin{equation}\label{eq:mu-lam}
	\bD\mu(t)=\lambda \gamma(t).
\end{equation}

\begin{theorem}[{\cite[p.~106]{Hen}}]\label{th:K}
Let $\gamma\in H$ and let $f(z)$ be analytic in the unit disk with $f(0)$ real such that its boundary values $f(\eta(t))$ exist on the unit circle and satisfy $\Re[f(\eta(t))]=\gamma(t)$. Then the function $\mu(t) = \Im [f(\eta(t))]$ is given by
    \begin{equation}\label{eq:thm-mu}
        \mu(s) = \bK \gamma(s)=  \int_0^{2\pi} \frac{1}{2\pi}\cot \left(\frac{s - t}{2}\right)\gamma(t)dt, \quad s\in[0,2\pi],
    \end{equation}
    where the integral is understood as a Cauchy principal value integral. 
\end{theorem}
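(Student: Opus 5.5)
We will prove the theorem with Fourier series, i.e.\ by identifying $\bK$ as the Fourier multiplier $-\i\,\mathrm{sgn}(k)$. We then check that the principal value integral in \eqref{eq:thm-mu} realizes this multiplier.

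\emph{Fourier side.} Expand $\gamma(t)=a_0+\sum_{k\ge1}(a_k\cos kt+b_k\sin kt)$; this converges in $L^2$, and the coefficients decay because $\gamma$ is H\"older continuous. The analytic function in the disk with real part $\gamma$ on the circle and with $f(0)$ real is then
\[
f(z)=a_0+\sum_{k\ge1}(a_k-\i b_k)z^k ,
\]
since the Poisson extension of $\gamma$ is $a_0+\sum_{k\ge1} r^k(a_k\cos k\theta+b_k\sin k\theta)$. This $f$ is unique: if two such functions existed, their difference would have zero real part on the circle, hence be an imaginary constant, and $f(0)$ real forces that constant to be $0$. On $|z|=1$ we read off
\[
\mu(t)=\Im f(e^{\i t})=\sum_{k\ge1}(a_k\sin kt-b_k\cos kt).
\]
Thus the conjugation map sends $\cos kt\mapsto\sin kt$ and $\sin kt\mapsto-\cos kt$, and it kills constants. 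The boundary values exist and $\mu$ is again H\"older continuous by Privalov's theorem, which we take from the literature cited for the theorem.

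\emph{Kernel side.} We show that the principal value operator in \eqref{eq:thm-mu} acts the same way on each Fourier mode. Note that $\frac{1}{2\pi}\cot\frac{s-t}{2}$ is odd in $s-t$, so it annihilates constants. For $k\ge1$, substitute $\tau=s-t$ and use $\mathrm{p.v.}\int_0^{2\pi}\cot(\tau/2)\cos k\tau\,d\tau=0$ (odd integrand about $\pi$) together with
\[
\frac1{2\pi}\,\mathrm{p.v.}\int_0^{2\pi}\cot\frac{\tau}{2}\sin k\tau\,d\tau=1 .
\]
The last integral follows from the identity
\[
\cot\frac{\tau}{2}\,\sin k\tau=1+\cos k\tau+2\sum_{j=1}^{k-1}\cos j\tau ,
\]
which one checks by multiplying both sides by $\sin(\tau/2)$ and telescoping. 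Writing $\cos k(s-\tau)$ and $\sin k(s-\tau)$ via the addition formulas and inserting these two integrals gives $\cos ks\mapsto\sin ks$ and $\sin ks\mapsto-\cos ks$, which matches the Fourier side.

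\emph{Passage to general $\gamma$.} The main obstacle is justifying the step from trigonometric polynomials to a general $\gamma\in H$. We handle it in three steps. First, for H\"older $\gamma$ the principal value exists, because
\[
\mathrm{p.v.}\int \cot\frac{s-t}{2}\,\gamma(t)\,dt=\int\cot\frac{s-t}{2}\,\bigl(\gamma(t)-\gamma(s)\bigr)\,dt ,
\]
and the right-hand integral converges absolutely. Second, the operator defined by the kernel is bounded on $L^2$, since it acts diagonally with multipliers of modulus at most $1$. Third, trigonometric polynomials are dense, so the two operators agree in $L^2$, and hence pointwise because both sides are continuous. Alternatively, one can argue directly: take the Schwarz formula $f(z)=\frac1{2\pi}\int_0^{2\pi}\frac{e^{\i t}+z}{e^{\i t}-z}\gamma(t)\,dt$, whose imaginary part is the conjugate Poisson integral, and let $z\to e^{\i s}$ using the Plemelj--Sokhotski formula. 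At $z=e^{\i s}$ the kernel equals $\i\cot\frac{s-t}{2}$, which yields the stated formula.
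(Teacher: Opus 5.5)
The paper gives no proof of this statement; it is quoted from Henrici, so your argument has to stand on its own. Your Fourier side and your computation on modes are correct. The identity $\cot\frac{\tau}{2}\sin k\tau=1+\cos k\tau+2\sum_{j=1}^{k-1}\cos j\tau$ does telescope after multiplying by $\sin\frac{\tau}{2}$, and the resulting rules $\cos ks\mapsto\sin ks$, $\sin ks\mapsto-\cos ks$ match the boundary values of $z^k$ and $-\i z^k$.

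\textbf{The gap} is in the step you yourself flag as the main obstacle. You have only shown that the principal value operator in \eqref{eq:thm-mu} agrees with the multiplier $-\i\,\mathrm{sgn}(k)$ on trigonometric polynomials. That gives a bounded operator on a dense subspace, and hence some $L^2$ extension. It does not show that this extension, evaluated at your H\"older $\gamma$, equals the pointwise principal value integral at $\gamma$, and that identification is the whole content of the theorem. Saying the kernel operator is bounded on $L^2$ ``since it acts diagonally'' presupposes that it acts diagonally on all of $L^2$. That is a genuine theorem about the conjugate function (a.e.\ existence of the principal value and its agreement with the Fourier-defined conjugate), not a consequence of the modewise computation. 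As written, the density step is circular.

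\textbf{A repair} uses only what you already have, provided $f$ is continuous on $\overline{\DD}$ (which your uniqueness step needs anyway). From the subtracted form and $|\cot\frac{\tau}{2}|\le 2/|\tau|$ on $[-\pi,\pi]$, for every $\beta\in(0,1)$ and H\"older seminorm $[\phi]_\beta$,
\[
\sup_{s}|\bK\phi(s)|\le\frac{[\phi]_\beta}{2\pi}\int_{-\pi}^{\pi}2|\tau|^{\beta-1}\,d\tau=\frac{2\pi^{\beta-1}}{\beta}\,[\phi]_\beta .
\]
Let $\gamma$ be H\"older of order $\alpha$ and let $\gamma_N$ be its Fej\'er means.
\begin{itemize}
\item Since $\gamma_N$ is a convolution with a nonnegative kernel of unit mass, $[\gamma_N]_\alpha\le[\gamma]_\alpha$. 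Also $\gamma_N\to\gamma$ uniformly.
\item Hence $[\gamma_N-\gamma]_\beta\le(2\|\gamma_N-\gamma\|_\infty)^{1-\beta/\alpha}(2[\gamma]_\alpha)^{\beta/\alpha}\to0$ for $\beta<\alpha$, so $\bK\gamma_N\to\bK\gamma$ uniformly.
\item On the other side, $\Im f(re^{\i t})\to\mu(t)$ uniformly as $r\to1$, so $\mu$ has exactly the conjugate Fourier coefficients.
\item By your modewise computation, $\bK\gamma_N$ is then the $N$-th Fej\'er mean of $\mu$, which tends to $\mu$ uniformly.
\end{itemize}
Thus $\bK\gamma=\mu$, and you do not even need Privalov's theorem.

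\textbf{Your alternative via the Schwarz integral} is sound. Indeed $\frac{e^{\i t}+e^{\i s}}{e^{\i t}-e^{\i s}}=\i\cot\frac{s-t}{2}$, and the Plemelj--Sokhotski jump produces the term $\gamma(s)$. However, this route imports the Plemelj--Sokhotski theorem for H\"older densities, which is of the same depth as the statement itself.

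\textbf{On uniqueness:} your uniqueness step relies on the maximum principle, so it requires $f$ to be continuous on $\overline{\DD}$. That is the intended reading of ``boundary values exist''. With merely a.e.\ boundary values uniqueness fails: $f(z)=\frac{1+z}{1-z}$ has $f(0)=1$, and $\Re f$ has boundary value $0$ at every point of the circle except $z=1$.
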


The operator $\bK$ is known as the conjugation operator (or the Hilbert transform) and per~\cite{Hen} its $L_2$ norm is $1$.
Then equation~\eqref{eq:mu-lam} can be written as 
\begin{equation}\label{eq:bKp-lam_gam-2}
	\bD\bK\gamma(t)=\lambda \gamma(t).
\end{equation}
That is, the Dirichlet-to-Neumann operator $\bT$ for the unit circle is given by 
\begin{equation}\label{eq:DtN-K}
	\bT=\bD\bK.
\end{equation}

The eigenvalues $\lambda$ and the corresponding eigenfunctions $\gamma$ will then be computed numerically from~\eqref{eq:bKp-lam_gam-2}. The function $\gamma$ is first represented on a grid of $n$ equidistant points 
\begin{equation}\label{eq:tj}
t_j = (j-1)\frac{2\pi}{n}, \quad j=1,2,\ldots,n,
\end{equation}
where $n$ is an even positive integer. 
Define the $n \times 1$ vector by $\bt = [t_1,t_2,\ldots,t_n]^T$ where $T$ denotes transpose. Then $\gamma(\bt)$ is defined as the $n\times1$ vector obtained by 
componentwise evaluations of the function $\gamma(t)$ at the points $t_j, ~ j=1,2,\ldots n$.

As explained in the Appendix~A, the operator $\bD$ can be discretized by the matrix
\begin{equation}\label{eq:D_eq_FMF}
   D= FWF^\ast 
\end{equation}
where $F$ is the $n \times n$ discrete Fourier transform matrix, $F^\ast$ is the Hermitian transpose of $F$, and the matrix $W$ is the $n\times n$ block matrix
\begin{equation}\label{eq:mat-W}
W= -\frac{\i}{n} \left[\begin{array}{cccc}
	~~0~~   & ~~0~~    &  ~~0~~   & 0  \\
	 0  & R  & 0  & 0  \\
	0    &0 & 0 & 0 \\
    0  & 0 & 0 &  -JRJ    \\
\end{array}\right]
\end{equation}
with the $(n/2-1)\times(n/2-1)$ matrices 
\[
R=\left[\begin{array}{cccc}
	~~1~~   & ~~0~~    &\cdots &0     \\
	0   & 2    &\cdots &0     \\
	\vdots     &\vdots      &\ddots &\vdots   \\
	0          & 0   &\cdots &n/2-1   \\
\end{array}\right], \quad
J=\left[\begin{array}{cccc}
~0~      &\cdots &~0~  &~1~   \\
0      &\cdots &1  &0     \\
\vdots        &\iddots  &\vdots    &\vdots   \\
1             &\cdots &0 &0   \\
\end{array}\right].
\]
The discrete Fourier transform matrix $F$ can be computed using the MATLAB function {\tt dftmtx}.
The conjugation operator $\bK$ is discretized by the $n \times n $ Wittich's matrix~\cite{Gai} 
\begin{equation}\label{eq:K-mat}
(K)_{ij} = \begin{cases}
    0, \quad \text{if }~i-j~\text{ is even,}\\
    \frac{2}{n} \cot \frac{(i-j)\pi}{n}, \quad \text{if }~i-j~\text{ is odd.}
\end{cases}
\end{equation}
Thus equation~\eqref{eq:bKp-lam_gam-2} is discretized by the $n\times n$ algebraic eigenvalue problem
\begin{equation}\label{eq:DK-mat}
DK \gamma(\bt) = \lambda \gamma(\bt).
\end{equation}

Note that $0$ is an eigenvalue of both the  operator $\bK$ and its discretized matrix $K$. However, the dimensions of the null-spaces of the operator $\bK$ and the matrix $K$ are not the same: $\dim(\Null(\bK))=1$ and $\dim(\Null(K))=2$. 
The operator $\bK$ has only the constant function as an eigenfunction corresponding to the eigenvalue $0$, whereas the matrix $K$ has two linearly independent eigenvectors corresponding to the eigenvalue $0$. 
These two eigenvectors are the constant vector and the vector $\cos(n\bt)$ which is simply a sequence of alternating $+1$ and $-1$~\cite{Weg}. Due to this fact, $0$ is an eigenvalue of the matrix $DK$ of multiplicity $2$. We will denote this eigenvalue as $\lambda_{-1}=\lambda_0=0$. 
This labelling keeps $\lambda_1$ as the first nonzero eigenvalue, consistent with the continuous problem, while accounting for the extra spurious zero eigenvalue introduced by the discretization.

We are interested in computing approximate values of only the eigenvalues $\lambda_1,\ldots,\lambda_{\hat n}$ of the matrix $DK$ and their corresponding eigenvectors $\gamma_1(\bt),\ldots,\gamma_{\hat n}(\bt)$ for $\hat n<<n$.
These eigenvalues and their corresponding eigenvectors can be computed in MATLAB using the function {\tt eigs}. To produce the eigenvalues in order of smallest to largest with the MATLAB function {\tt eigs}, we define ``sigma'' as ``smallestabs''. 
Further, it is known that $\lambda=0$ is an eigenvalue of the problem~\eqref{eq:st-1}. Thus, instead of using the function {\tt eigs} to compute the eigenvalues of the matrix $DK$, we will use {\tt eigs} to compute the eigenvalues of the matrix $DK+I$ where $I$ is the identity matrix. Then the eigenvalues of the matrix $DK$ are obtained by subtracting $1$ from the computed eigenvalues. This procedure will not affect the corresponding eigenvectors.
For each $k$, let $\lambda_{k,n}$ be the calculated eigenvalue with $n$ grid points and $\lambda_{k}$ be the exact eigenvalue. The relative error of each eigenvalue is calculated by
\begin{equation}\label{eq:ea_err}
 \dfrac{|\lambda_{k,n} - \lambda_{k}|}{\lambda_{k}}, \quad k=1,2,3, \ldots .
\end{equation}

The relative error for the first $10$ nonzero Steklov eigenvalues versus the
number of grid points $n$ is illustrated in the ellipse experiment for $r=1$, which corresponds to the unit disk; see Figure~\ref{fig:ell_err}(a). 
It is clear that the error is below $10^{-13}$ for $n \ge 20$. The corresponding eigenmodes for the disk case are likewise visible in the first panel of Figure~\ref{fig:ex_ell_B}, again for $r=1$.

\section{The generalized conjugation operator $\bE$}\label{sec:E}

The conjugation operator $\bK$ for the disk is generalized to an operator $\bE$ defined on smooth simply connected domains via the boundary integral equation with the generalized Neumann kernel in~\cite{Nas-E}. This operator provides the key analytical ingredient for extending the disk method to general geometries.

Let $G$ be a bounded or unbounded simply connected domain in the $z$-plane and let $\Gamma = \partial G$ be a smooth Jordan curve oriented such that $G$ is to the left of $\Gamma$. The curve is parametrized by $\eta(t),~0\leq t \leq 2\pi$, where $\eta(t)$ is a twice continuously differentiable function such that $\eta '(t) \neq 0$ on $[0,2\pi ]$. 
We define a complex-valued function $A(t)$ for $t\in [0,2\pi]$ by
\begin{equation}\label{eq:A}
A(t)= \begin{cases}
    \eta(t)-\alpha, \quad \text{if $G$ is bounded,}\\
    1, \quad \text{if $G$ is unbounded,}
\end{cases}
\end{equation}
where $\alpha$ is a given point in $G$.
We define the real kernels $M(s,t)$ and $N(s,t)$ as real and imaginary parts
\[
M(s,t)+\i N(s,t)= \frac{1}{\pi}  \dfrac{A(s)}{A(t)} \dfrac{\eta'(t)}{\eta(t)-\eta(s)}.
\]
The kernel $N(s,t)$ is known as the Generalized Neumann kernel~\cite{Weg-Nas}. It is continuous with
\[
N(t,t) = \frac{1}{\pi} \Im \left ( \frac{1}{2} \frac{\eta''(t)}{\eta'(t)}-\frac{A'(t)}{A(t)} \right ).
\]
The kernel $M(s,t)$ can be represented by
\[
M(s,t) = - \frac{1}{2\pi} \cot \frac{s-t}{2} + \tilde{M}(s,t)
\]
with a continuous kernel $\tilde{M}(s,t)$ that has the diagonal values
\[
\tilde{M}(t,t) = \frac{1}{\pi} \Re \left ( \frac{1}{2} \frac{\eta''(t)}{\eta'(t)}-\frac{A'(t)}{A(t)} \right ).
\]

The integral operators with the kernels $N(s,t)$ and $M(s,t)$ are defined on the space $H$ by 
\begin{eqnarray}
\bN \gamma(s) &=& \int_0^{2\pi} N(s,t)\gamma(t) dt, \quad s \in [0,2\pi],\\
\label{eq:bM}
\bM \gamma(s) &=& \int_0^{2\pi} M(s,t)\gamma(t) dt, \quad s \in [0,2\pi].
\end{eqnarray}
The integral operator $\bN$ is compact and the integral operator $\bM$ is singular where the integral in~\eqref{eq:bM} is understood as a Cauchy principal value integral.  Both operators $\bN$ and $\bM$ are bounded on $H$ and both operators map $H$ into $H$. 
Thus, when $\gamma\in H$, the function $\bM\gamma$ is also in $H$.
Further details can be found in~\cite{Weg-Nas}.

\begin{theorem}[{\cite[Theorem~2.1]{Nas-E}}]\label{th:null}
The null-spaces of the operators $\Null(\bI\pm\bN)$ and $\bM$ are given by
\begin{eqnarray}
\label{eq:null-}
\Null(\bI-\bN)&=&\{0\},\\
\label{eq:null+}
\Null(\bI+\bN)&=&\Null(\bM)=\span\{1\}.
\end{eqnarray}

\end{theorem}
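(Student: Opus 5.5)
The plan is to use the classical Plemelj--Sokhotski description of the generalized Neumann kernel. For $\gamma\in H$, consider the Cauchy-type integral
\[
\Phi(z)=\frac{1}{2\pi\i}\int_\Gamma \frac{\gamma(t)}{A(t)}\,\frac{\eta'(t)\,dt}{\eta(t)-z},\qquad z\notin\Gamma .
\]
Its boundary values from $G$ (denoted $\Phi^+$) and from the complement $G^-$ (denoted $\Phi^-$) satisfy, on $\Gamma$,
\[
A(s)\Phi^\pm(\eta(s))=\tfrac12\bigl[\pm\gamma(s)+\i(\bN\gamma)(s)-(\bM\gamma)(s)\bigr],
\]
up to orientation signs which I will fix carefully. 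This relation comes directly from the definition $M+\i N=\frac{1}{\pi}\frac{A(s)}{A(t)}\frac{\eta'(t)}{\eta(t)-\eta(s)}$. Taking real and imaginary parts then gives a dictionary:
\begin{itemize}
\item $\gamma\in\Null(\bI+\bN)$ or $\gamma\in\Null(\bI-\bN)$ corresponds to a Riemann--Hilbert condition $\Re[A\Phi^\mp]=0$ or $\Im[\,\cdot\,]=0$ on $\Gamma$, of the form $\Re[\overline{\i A}\,\Phi^\pm]=0$ or similar;
\item $\gamma\in\Null(\bM)$ corresponds to a similar real-part condition on a combination of $\Phi^+$ and $\Phi^-$.
\end{itemize}

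Next I will use the index of the homogeneous Riemann--Hilbert problem $\Re[\bar A f]=0$. In the bounded case $A=\eta-\alpha$ has winding number $1$ about $\alpha$. In the unbounded case $A=1$ has winding number $0$, but $\Gamma$ is clockwise and infinity lies in $G$. Counting zeros via the argument principle in the standard way (as in Wegmann--Nasser):
\begin{itemize}
\item the problem in $G$ with coefficient $A$ has index $1$, so its solution space is spanned by the single function $f=\i A$ (bounded case) or the constant $\i$ (unbounded case), both of which are real-trivial;
\item the problem in the complementary domain $G^-$ has index $-1$, so it has only the trivial solution.
\end{itemize}

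With these facts, I will argue the three claims as follows.

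\textbf{$\Null(\bI-\bN)=\{0\}$.} For $\gamma$ in this null space, the function $\Phi^-$ (the exterior boundary value) satisfies the homogeneous problem of negative index in $G^-$, so $\Phi^-\equiv0$. The jump relation $\Phi^+-\Phi^-=\gamma/A$ then shows that $\gamma/A$ extends analytically into $G$ with $\Re[A\cdot(\gamma/A)]$ real. This forces $A\Phi^+$ to be real on $\Gamma$ and to solve an index-$1$ problem whose only solutions give a purely imaginary value, so $\gamma=0$.

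\textbf{$\Null(\bI+\bN)=\span\{1\}$.} Symmetrically, $\Phi^+$ solves the index-$1$ problem in $G$. Hence $A\Phi^+=c\,\i$ times the generator, and the jump relation yields $\gamma=$ constant. Conversely, a direct check with $\gamma=1$ and Cauchy's formula shows that $\bN1=-1$ and $\bM1=0$.

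\textbf{$\Null(\bM)=\span\{1\}$.} If $\bM\gamma=0$, I will set $\psi=\bN\gamma$ and observe that the function $A\Phi^+$ has boundary values $\tfrac12(\gamma+\i\psi)$. Using $\bM1=0$, I will reduce to showing that $\psi$ is forced to be constant and $\gamma\in\Null(\bI+\bN)$. The identities from the Plemelj relations, $\bN^2-\bM^2=\bI$ and $\bM\bN+\bN\bM=0$, will be the tool here. From $\bM\gamma=0$ they give $\bN^2\gamma=\gamma$, i.e.\ $(\bI-\bN)(\bI+\bN)\gamma=0$. Since $\bI-\bN$ is injective, this implies $(\bI+\bN)\gamma=0$, hence $\gamma$ is constant.

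The main obstacle I expect is bookkeeping signs and the index computation in the unbounded case. There, $\infty\in G$ and the normalization of $\Phi$ at infinity must be checked, which I will handle via the parametrization orientation and the argument principle applied to $A$.
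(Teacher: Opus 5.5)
The paper does not prove this statement; it imports it from \cite{Nas-E}. Judged on its own, your architecture is sound. It uses Plemelj jump relations for the Cauchy integral $\Phi$ of $\gamma/A$, then uniqueness for homogeneous Riemann--Hilbert problems, then $\Null(\bM)$ via $\bN^2-\bM^2=\bI$ (which does follow from the Cauchy singular operator squaring to the identity) together with injectivity of $\bI-\bN$. That last step is correct.

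The first two parts, however, rest on solution-space facts that are reversed. The kernel gives $A\Phi^{\pm}=\tfrac12(\pm\gamma+\bN\gamma)-\tfrac{\i}{2}\bM\gamma$; your displayed formula puts the factor $\i$ on the wrong operator. So $\Null(\bI+\bN)$ corresponds to $\Re[A\Phi^+]=0$ in $G$, and $\Null(\bI-\bN)$ to $\Re[A\Phi^-]=0$ in $G^-$.
\begin{itemize}
\item In $G$ this homogeneous problem has \emph{only} the trivial solution. In the bounded case, $g=(z-\alpha)\Phi$ has $\Re g=0$ on $\Gamma$, so $g$ is an imaginary constant, and $g(\alpha)=0$ forces it to vanish. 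In the unbounded case, $\Phi(\infty)=0$ does the same, and excludes your constant $\i$.
\item Your generator $\i A$ is not a solution at all, since $\Re[A\cdot\i A]=-\Im(A^2)\not\equiv0$.
\item In $G^-$, by contrast, the problem has the nontrivial solutions $\Phi=\i c/A$ for bounded $G$ (note $(z-\alpha)\Phi$ stays bounded at $\infty$), and $\Phi=\i c$ for unbounded $G$.
\end{itemize}

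Consequently, your step ``$\Phi^-\equiv0$'' in the proof of $\Null(\bI-\bN)=\{0\}$ is false as stated. A priori you only get $\Phi^-=\i c/A$, hence $A\Phi^+=\gamma+\i c$ on $\Gamma$. What finishes the argument is a normalization you never invoke: $(z-\alpha)\Phi$ (resp.\ $\Phi$) is analytic in $G$ with constant imaginary part $c$ on $\Gamma$, hence constant. It vanishes at $\alpha$ (resp.\ at $\infty$), so $c=0$ and $\gamma=0$.

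In the proof of $\Null(\bI+\bN)=\span\{1\}$, the interior analysis only yields $\Phi^+\equiv0$, which by itself says nothing about $\gamma$. Your ``the jump relation yields $\gamma$ constant'' needs the missing exterior step. Since $\Phi^-=-\gamma/A$, the function $(z-\alpha)\Phi$ (resp.\ $\Phi$) is analytic in $G^-$, bounded at $\infty$ when $G$ is bounded because $\Phi=O(1/z)$, and real on $\Gamma$. It is therefore a real constant, so $\gamma$ is constant.

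The vanishing of $A$ at $\alpha\in G$, or of $\Phi$ at $\infty\in G$, is exactly what separates $\bI-\bN$ from $\bI+\bN$, and it is absent from your sketch. With these two steps repaired, your $\Null(\bM)$ argument goes through unchanged.
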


\begin{theorem}[{\cite[Theorem~2]{Weg-Nas}}]\label{th:ie}
Let $\gamma,\mu \in H$ be such that the formula 
    \begin{equation}\label{eq: thm2_1}
       A(t)f(\eta(t)) =  \gamma(t)  + \i\mu(t)
    \end{equation}
    defines the boundary values of a function $f$ that is analytic in $G$ with $f(\infty)=0$ for unbounded G. Then the function $\mu$ is the unique solution of the integral equation
    \begin{equation}\label{eq:ie}
        (\bI - \bN) \mu = -\bM \gamma.
    \end{equation}
\end{theorem}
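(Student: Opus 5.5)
The natural route is a Plemelj--Sokhotski (Cauchy integral) argument applied to the function $A f$ on $\Gamma$, followed by uniqueness from Theorem~\ref{th:null}. Define $g(z)=A(z)f(z)$ in $G$: for bounded $G$ take $A(z)=z-\alpha$, so $g$ is analytic in $G$ and vanishes at $z=\alpha$; for unbounded $G$ take $g=f$, which is analytic in $G$ with $g(\infty)=0$. In both cases the boundary values are $g(\eta(t))=\gamma(t)+\i\mu(t)$ by \eqref{eq: thm2_1}. The key idea is to apply the Cauchy integral not to $g$ itself but to $g/A$, since $f=g/A$ is analytic in $G$ (and vanishes at infinity in the unbounded case).

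First I would write Cauchy's formula for $f$ at a boundary point. Since $f$ is analytic in $G$, the Plemelj--Sokhotski formula for the boundary limit from $G$ (with $G$ on the left) gives, for $s\in[0,2\pi]$,
\[
f(\eta(s))=\frac{1}{2}f(\eta(s))+\frac{1}{2\pi\i}\,\mathrm{PV}\!\int_0^{2\pi}\frac{f(\eta(t))\,\eta'(t)}{\eta(t)-\eta(s)}\,dt .
\]
In the unbounded case the contribution from infinity vanishes because $f(\infty)=0$; in the bounded case no extra term appears. Multiplying by $A(s)$ and using $f(\eta(t))=(\gamma+\i\mu)(t)/A(t)$ then yields
\[
\gamma(s)+\i\mu(s)=\frac{1}{\pi\i}\,\mathrm{PV}\!\int_0^{2\pi}\frac{A(s)}{A(t)}\frac{\eta'(t)}{\eta(t)-\eta(s)}\,(\gamma(t)+\i\mu(t))\,dt
=-\i\int_0^{2\pi}(M+\i N)(s,t)(\gamma+\i\mu)(t)\,dt .
\]

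Taking real parts of this identity with $\gamma,\mu$ real gives
\[
\gamma=\bN\gamma+\bM\mu .
\]
Taking imaginary parts gives
\[
\mu=-\bM\gamma+\bN\mu ,
\]
that is, $(\bI-\bN)\mu=-\bM\gamma$, which is exactly \eqref{eq:ie}. For uniqueness, if two solutions existed, their difference would lie in $\Null(\bI-\bN)=\{0\}$ by Theorem~\ref{th:null}, eq.~\eqref{eq:null-}. Hence $\mu$ is the unique solution.

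The main obstacle is justifying the Plemelj step rigorously for H\"older data. One has to check that the boundary values exist and are H\"older, and, for unbounded $G$, to handle the clockwise orientation together with the behavior at infinity. Concretely, one applies Cauchy's theorem on $G\cap\{|z|<R\}$ and lets $R\to\infty$, using $f(z)=O(1/z)$ so that the large-circle integral tends to $0$. Since the Cauchy-integral jump relations for H\"older densities on smooth curves are classical (Privalov), I would cite them rather than reprove them. Beyond that, the remaining care is purely bookkeeping: confirming that the principal value splits as the $-\frac{1}{2\pi}\cot\frac{s-t}{2}$ part of $M$ plus the continuous parts $\tilde M$ and $N$, consistent with the kernel representations stated above.
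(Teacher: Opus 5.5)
Your proof is correct. Rewriting the Sokhotski--Plemelj relation as $f(w)=\frac{1}{\pi\i}\,\mathrm{PV}\!\int_\Gamma \frac{f(\zeta)}{\zeta-w}\,d\zeta$ for $f=(\gamma+\i\mu)/A$, multiplying by $A(s)$ and taking imaginary parts is the standard derivation behind the cited result \cite[Theorem~2]{Weg-Nas}, which the paper quotes without proof; your orientation and $f(\infty)=0$ bookkeeping matches the convention of Remark~\ref{re:orien}, and your uniqueness step via $\Null(\bI-\bN)=\{0\}$ is exactly the argument the paper gives right after that remark.
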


\begin{remark}\label{re:orien}
Note that some papers use counterclockwise orientation for both bounded and unbounded domains, while we use clockwise orientation for the unbounded domain so that the integral equation~\eqref{eq:ie} is valid for both bounded and unbounded domains. 
\end{remark}

Since $\Null(\bI-\bN)=\{0\}$, the Fredholm Alternative Theorem implies that the solution of the integral equation~\eqref{eq:ie} is unique. Further, it follows that $(\bI-\bN)^{-1}$ exists and is bounded~\cite{Atk}. The operator $\bM$ is also bounded on $H$~\cite{Weg-Nas}.
Hence the integral operator $\bE$ defined on $H$ by 
\[
\bE = - (\bI - \bN) ^{-1} \bM.
\]
is bounded. The operator $\bE$ has a weighted $L_2$ norm which is equal to $1$~\cite{Nas-E2}.

\begin{theorem}[{\cite[p.50]{Nas-E}}]\label{th:E}
    Let $\gamma, \mu \in H$. Then $f(\eta(t)) =  \gamma(t)  + \i\mu(t)$ is the boundary value of an analytic function in $G$ with $\Im f(\alpha) = 0 $ for bounded $G$ and $\Im f(\infty) = 0 $ for unbounded $G$ if and only if 
    \begin{equation}\label{eq: E}
            \mu = \bE \gamma.
    \end{equation}
\end{theorem}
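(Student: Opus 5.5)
The plan is to reduce Theorem~\ref{th:E} to Theorem~\ref{th:ie}. The reduction uses two facts. First, subtracting a real constant from $\gamma$ does not change $\bM\gamma$, because $\Null(\bM)=\span\{1\}$ by Theorem~\ref{th:null}. Second, $\bE=-(\bI-\bN)^{-1}\bM$ is a well-defined single-valued operator, since $\Null(\bI-\bN)=\{0\}$. The forward direction comes from building an auxiliary analytic function to which Theorem~\ref{th:ie} applies. The converse then follows from existence of a harmonic conjugate together with the forward direction.

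\emph{Forward direction, bounded $G$.} Let $f$ be analytic in $G$ with boundary values $\gamma+\i\mu$ and with $c:=f(\alpha)\in\mathbb{R}$. I would set $g(z)=(f(z)-c)/(z-\alpha)$. This function is analytic in $G$ because the singularity at $\alpha$ is removable. On $\Gamma$ we have $A(t)g(\eta(t))=f(\eta(t))-c=(\gamma(t)-c)+\i\mu(t)$, and $\gamma-c\in H$. Theorem~\ref{th:ie} therefore gives $(\bI-\bN)\mu=-\bM(\gamma-c)$. Since $\bM 1=0$, the right-hand side equals $-\bM\gamma$. Applying $(\bI-\bN)^{-1}$ yields $\mu=\bE\gamma$.

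\emph{Forward direction, unbounded $G$.} Here $A\equiv1$, and I would set $g=f-c$ with $c=f(\infty)\in\mathbb{R}$. Then $g$ is analytic in $G$ with $g(\infty)=0$, and its boundary values are $(\gamma-c)+\i\mu$. The same argument as in the bounded case gives $\mu=\bE\gamma$.

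\emph{Converse.} Suppose $\mu=\bE\gamma$. Let $u$ be the harmonic extension of $\gamma$ into $G$; in the unbounded case take the one bounded at $\infty$. Next take a harmonic conjugate $v$ normalized by $v(\alpha)=0$, respectively $v(\infty)=0$. In the bounded case $v$ is single-valued because $G$ is simply connected. In the unbounded case $G$ is simply connected on the Riemann sphere and $u$ is harmonic at $\infty$, so there is no logarithmic period and $v$ is again single-valued. Then $f=u+\i v$ is analytic with $\Im f(\alpha)=0$ (respectively $\Im f(\infty)=0$). Write $\tilde\mu=v|_\Gamma$, so that $f(\eta(t))=\gamma(t)+\i\tilde\mu(t)$. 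The forward direction gives $\tilde\mu=\bE\gamma=\mu$, so $\gamma+\i\mu$ is the boundary value of $f$, as required.

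\emph{Main obstacle.} The delicate step is regularity in the converse. We must know that $v$ extends continuously to $\Gamma$ with $\tilde\mu\in H$, so that Theorem~\ref{th:ie} is applicable. I would handle this with the Privalov theorem: the harmonic conjugate of Hölder boundary data on a smooth (say $C^{1,\beta}$, here $C^2$) Jordan curve has Hölder boundary values. One way to arrange this is to transport to the disk by a conformal map, whose boundary correspondence is $C^{1,\beta}$ by Kellogg--Warschawski, and then use the Hölder boundedness of $\bK$ from Theorem~\ref{th:K}. Apart from this, the argument is purely algebraic.
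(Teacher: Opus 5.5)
Your proof is correct, and it differs from the paper's in the converse.

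\textbf{Forward direction.} This matches the paper. For unbounded $G$ the paper argues exactly as you do: it sets $\Psi=f-c$, applies Theorem~\ref{th:ie}, and uses $\bM c=0$. For bounded $G$ the paper only refers to \cite{Nas-E}. Your division by $z-\alpha$ is the natural way to absorb the factor $A(t)=\eta(t)-\alpha$ required by Theorem~\ref{th:ie}, and it shows why the normalization $\Im f(\alpha)=0$ is needed.

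\textbf{Converse, paper's route.} The paper writes this out only for unbounded $G$. It starts from the fact that $\mu$ is the unique solution of $(\bI-\bN)\mu=-\bM\gamma$. It then invokes an existence result from the Riemann--Hilbert theory behind the generalized Neumann kernel (\cite[Equation (101)]{Weg-Nas}). That result supplies an analytic $\Psi$ with $\Psi(\infty)=0$ and boundary values $\gamma+c+\i\mu$ for some real constant $c$, and the paper takes $f=\Psi-c$.

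\textbf{Converse, your route.} You build $f$ from $\gamma$ alone, using the Dirichlet problem and a normalized single-valued harmonic conjugate. The forward direction then identifies $v|_\Gamma$ with $\bE\gamma=\mu$.

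\textbf{Trade-offs.} The paper's route stays inside the integral-equation framework. It gets regularity at no cost, because the imaginary part of the boundary values is the given $\mu\in H$. Your route uses only classical potential theory: the Dirichlet solvability that the paper already assumes in its introduction, and your correct no-period argument at $\infty$. It treats bounded and unbounded $G$ by one argument, and it shows that the ``if'' direction is really a uniqueness consequence of the ``only if'' direction. The price is the Privalov/Kellogg--Warschawski step you flagged. That step is genuinely needed to place $v|_\Gamma$ in $H$ before Theorem~\ref{th:ie} can be applied.
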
    
\noindent\textbf{Proof} 
For bounded domains, the proof is sketched in~\cite{Nas-E}.

To prove the theorem for an unbounded domain $G$, let $f(z)$ be analytic in $G$ with $\Im f(\infty ) = 0$ such that the boundary values of $f$ are given by 
\[
f(\eta(t)) = \gamma (t) + \i \mu (t), \quad t\in [0,2\pi].
\]
Define 
\[
\Psi(z) = f(z) - c
\]
where $c = f(\infty)$ is real. Then $\Psi(z)$ is analytic in $G$ with $\Psi(\infty) =0 $ with the boundary values
\[
A(t) \Psi(\eta (t)) = \gamma (t) - c+ \i \mu (t)
\]
where $A(t) = 1$ for unbounded $G$. Thus it follows from Theorem~\ref{th:ie} that
\[
(\bI - \bN) \mu = - \bM (\gamma -c ).
\]
Since, by~\eqref{eq:null+}, $\bM c = 0$, then $\mu$ is the unique solution of the integral equation~\eqref{eq:ie} and hence $\mu$ is given by equation~\eqref{eq: E}. 

Now let $\mu = \bE \gamma$, which implies that $\mu$ is the unique solution of the integral equation~\eqref{eq:ie}. Then it follows from {\cite[Equation (101)]{Weg-Nas}} and Remark~\ref{re:orien} that there exists an analytic function $\Psi$ in $G$ with $\Psi(\infty) = 0$ such that 
\[
\Psi(\eta(t)) = \gamma (t) + c + \i \mu (t) 
\]
and $c$ is a real constant.
Define $f(z) = \Psi(z) - c$, then $f(z)$ is analytic in $G$ with $\Im f(\infty) = 0$ and $f(\eta(t)) = \gamma(t) + \i \mu(t)$. \hfill $\Box$

\section{Discretization and algorithm}\label{sec:sim}

With the help of the operator $\bE$ introduced in \S\ref{sec:E}, the method presented in \S\ref{sec:disk} for computing the eigenvalues for the unit disk will be extended in this section to bounded and unbounded simply connected domains with smooth boundaries. 

Let $G$ be a simply connected domain such that $\Gamma=\partial G$ is as described in the previous section. We consider the following boundary value problem
\begin{equation}\label{eq:st-2}
	\Delta u= 0 \quad {\rm in} \quad G; \quad\frac{\partial u}{\partial \bn}=
	\lambda u \quad {\rm on}\,\,\, \Gamma=\partial G.
\end{equation}
For unbounded $G$, the function $u$ is also required to satisfy $u(z)\to C$ as $|z|\to\infty$ with a constant $C$. 

The function $u(z)$ is the real part of an analytic function $f(z)$ in the domain $G$~\cite{Hen}.
Without loss of generality, we assume that $\Im f(\alpha)=0$ for bounded $G$ and $\Im f(\infty)=0$ for unbounded $G$. Assume that the boundary $\Gamma$ is parametrized by the function $\eta(t)$,  $0\le t\le 2\pi$, where $\eta(t)$ satisfies the assumptions in the previous section. We assume that 
\begin{equation}\label{eq:f-bdv-2}
	f(\eta(t))=\gamma(t)+\i\mu(t)
\end{equation}
where $\gamma(t) = u(\eta(t))$. Then Theorem~\ref{th:E} implies that
\begin{equation}\label{eq:mu-Egam}
\mu(t) = \bE \gamma(t).
\end{equation}

By replacing the outward normal derivative in equation~\eqref{eq:par-u-n} using the boundary condition of equation~\eqref{eq:st-2}, we obtain
\begin{equation}\label{eq:lam_etp_u}
    \lambda  u(\eta (t)) = \Re\left[-\i \frac{\eta '(t)}{|\eta'(t)|} f'(\eta(t))\right] 
		= \frac{1}{|\eta'(t)|}  \Im\left[\eta '(t) f'(\eta(t))\right].
\end{equation}
Differentiating both sides of~\eqref{eq:f-bdv-2} with respect to the parameter $t$ yields
\[
\eta'(t) f'(\eta(t)) = \gamma'(t)+\i\mu'(t).
\]
Then~\eqref{eq:lam_etp_u} implies that
\begin{equation}\label{eq:mu-lam-2}
    \mu'(t) = \lambda |\eta'(t)| \gamma(t).
\end{equation}
In view of~\eqref{eq:mu-Egam}, equation~\eqref{eq:mu-lam-2} can be written as 
\begin{equation}\label{eq:DE-lam}
  \bD\bE\gamma(t) = \lambda |\eta'(t)| \gamma(t),
\end{equation}
where $\bD$ is the differentiation operator. 
Let $\rho(t) = 1/|\eta'(t)|$. Thus equation~\eqref{eq:DE-lam} is rewritten as
\begin{equation}\label{eq:DE-lam-2}
  \rho(t)\bD\bE\gamma(t) = \lambda \gamma(t).
\end{equation}
Consequently, the Dirichlet-to-Neumann map of the function $\gamma(t)$ for the simply connected domain $G$ is given by 
\begin{equation}\label{eq:DtN-E}
  \bT\gamma(t)=\rho(t)\bD\bE\gamma(t).
\end{equation}

\begin{remark}
When $G$ is the unit disk and hence $\Gamma$ is the unit circle, the generalized conjugation operator $\bE$ reduces to the classical conjugation operator $\bK$~\cite{Nas-E}. Further, we will have $\rho(t)=1/|\eta'(t)|=1$. Hence, the Dirichlet-to-Neumann operator $\bT$ in~\eqref{eq:DtN-E} reduces to the operator $\bT$ in~\eqref{eq:DtN-K} and the method presented in this section reduces to the method presented in~\S\ref{sec:disk}.
\end{remark}

The operator $\bE$ is discretized by discretizing the integral equation~\eqref{eq:ie}. 
The function $\bM\gamma$ in the right-hand side of the integral equation~\eqref{eq:ie} is continuous since the function $\gamma$ is H\"older continuous. 
Further, since the integrals in~\eqref{eq:ie} are over $2\pi$-periodic functions, the integral equation~\eqref{eq:ie} can be best discretized by the Nystr\"om method with the trapezoidal rule~\cite{Atk}.
The stability and convergence of the Nystr\"om method is based on the compactness of the operator $\bN$ in the space of continuous functions equipped with the sup-norm, on the convergence of
the trapezoidal rule for all continuous functions, and on the theory of collectively
compact operator sequences~\cite{Atk}. 
The numerical solution of the integral equation will then converge with a similar rate of convergence as the trapezoidal rule~\cite[p.~322]{Atk}.	
The order of the convergence of the trapezoidal rule depends on the smoothness of the integrands in~\eqref{eq:DE-lam-2} which in turn depends on the smoothness of the boundary $\Gamma$.  
If the integrand is $q$ times continuously differentiable, then the rate of convergence of
the trapezoidal rule is $O(n^{-q})$. For analytic boundaries, the trapezoidal rule converges exponentially with $O(e^{-cn})$ where $c$ a positive constant~\cite{Tre-Wei}.

For any $2\pi$-periodic continuous function $\phi(t)$, the trapezoidal rule approximates the integral of $\phi(t)$ over the interval $[0,2\pi]$ by
\[
\int_{0}^{2\pi}\phi(t)dt \approx \frac{2\pi}{n}\sum_{k=1}^{n}\phi(t_k).
\]
We then use the trapezoidal rule to discretize the integrals in~\eqref{eq:ie}. The kernels of the operators  $\bN$ and $\tilde\bM$ are continuous. Hence, the Nystr\"om method with the trapezoidal rule can be used to find discretization matrices $B$ and $\tilde C$ of the operators $\bN$ and $\tilde\bM$, respectively, with the entries
\[
(B)_{ij} =  \frac{2\pi}{n}N(t_i,t_j),\quad
(\tilde C)_{ij} =  \frac{2\pi}{n}\tilde M(t_i,t_j).
\]
The operator $\bK$ is discretized by the $n \times n $ Wittich's matrix $K$ given by~\eqref{eq:K-mat}. 
Thus the integral operator $\bM$ is discretized by the $n \times n $ matrix $C$ with the entries
\[
(C)_{ij} = - (K)_{ij} + (\tilde C)_{ij},
\]
and the integral operator $\bE$ is then discretized by the $n \times n$ matrix 
\begin{equation}\label{eq:matE}
E = -(I - B)^{-1}C.
\end{equation}

Note that $0$ is a simple eigenvalue of both operators $\bK$ and $\bE$ with the constant function as the corresponding eigenfunction~\cite{Hen,Nas-E}. The other eigenvalues of $\bK$ and $\bE$ are $\pm\i$ where each of these eigenvalues have infinite number of corresponding eigenfunctions and hence both operators $\bK$ and $\bE$ are not compact~\cite{Hen,Nas-E}. 
Both discretization matrices $K$ and $E$ of the operators $\bK$ and $\bE$, respectively, have the eigenvalue $0$ with multiplicity $2$ with two linearly independent eigenvectors, namely, the vector whose elements are all $1$'s and the vector whose elements are a sequence of alternating of $+1$ and $-1$. All the remaining eigenvalues of the matrices are $\pm\i$ and each has the multiplicity $n/2-1$. Figure~\ref{fig:eig-ex3} shows the eigenvalues for matrices $K$ (left) and $E$ (center) for the domain $G_1$ of Example~\ref{ex:1} from~\S\ref{sec:ex}. Figure~\ref{fig:eig-ex3} (right) shows the absolute error $|\hat\lambda_{k}-\hat\lambda_{k,n}|$ for $k=1,2,\ldots,n$ with $n=2^{10}$ where, for each $k$, $\hat\lambda_{k}$ is the exact eigenvalue and $\hat\lambda_{k,n}$ is the numerically computed eigenvalue for both matrices $K$ and $E$. The numerical eigenvalues of both matrices $K$ and $E$ are computed using the MATLAB function {\tt eig}.

\begin{figure}[ht] %eig_e_k.m
\centerline{
\scalebox{0.28}{\includegraphics{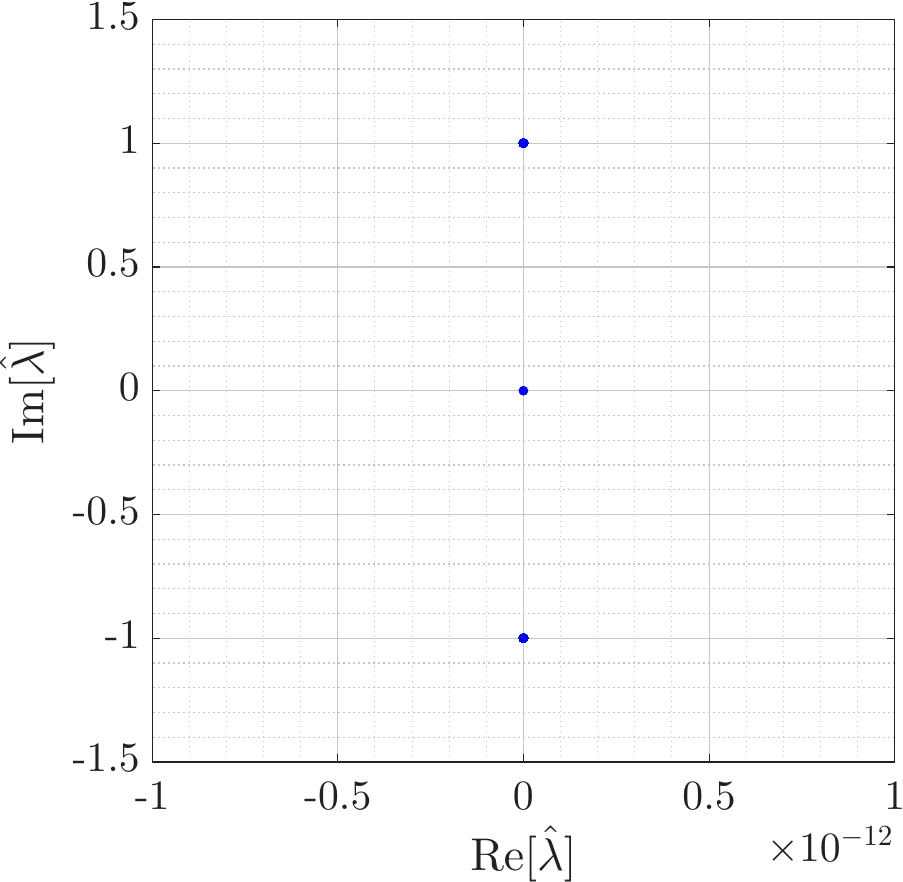}}
\hfill
\scalebox{0.28}{\includegraphics{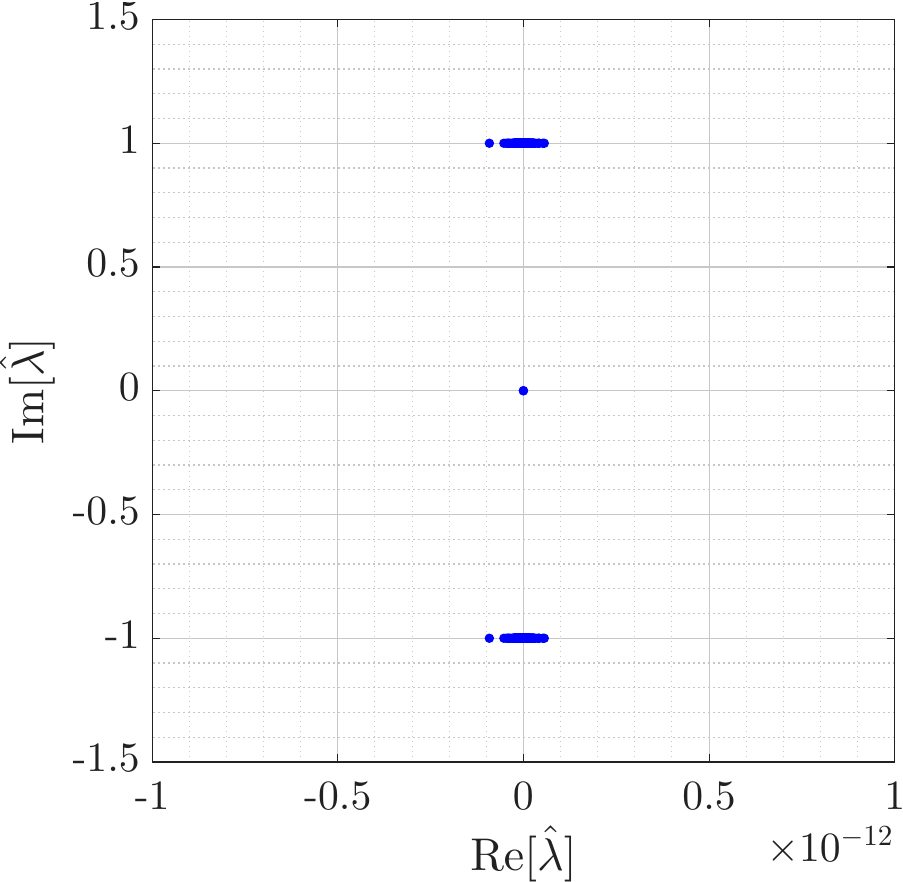}}
\hfill
\scalebox{0.28}{\includegraphics{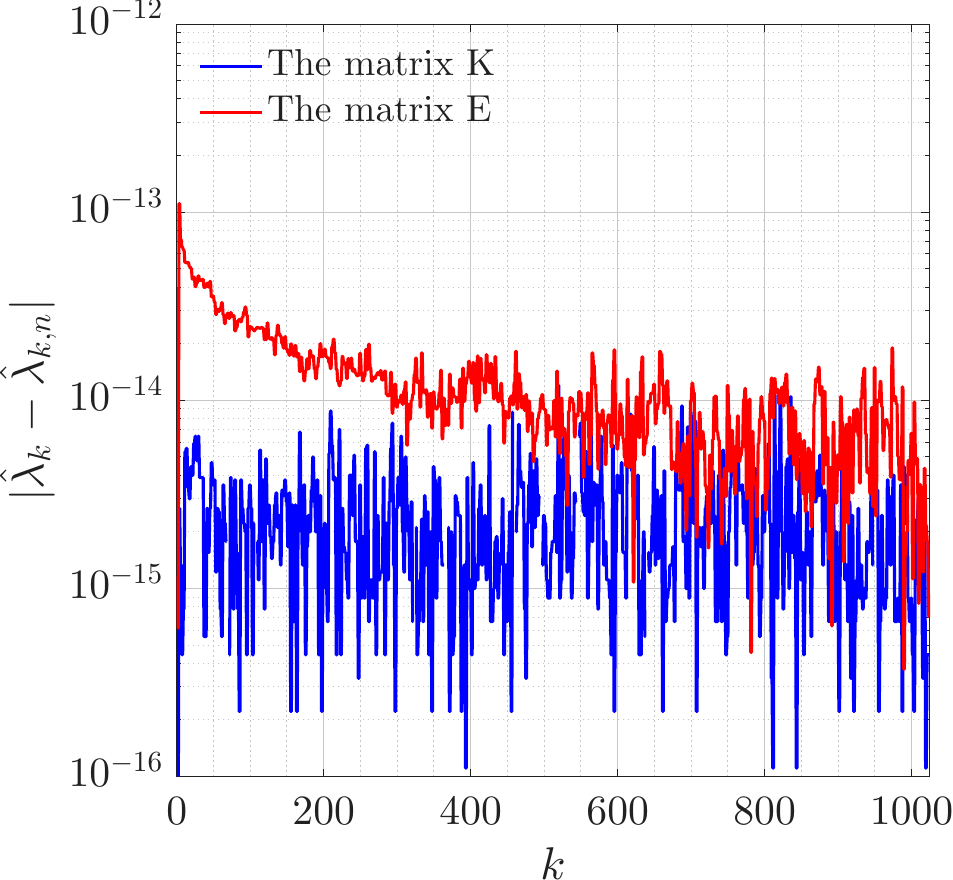}}
}
\caption{The eigenvalues of the matrices $K$ (left) and $E$ (center) for the bounded domain in Figure~\ref{fig:domains} (left) obtained with $n=2^{10}$. On the right, the absolute error of the numerically computed eigenvalue for both matrices $K$ and $E$.}
	\label{fig:eig-ex3}
\end{figure}

Let $P$ be the $n \times n $ diagonal matrix whose diagonal is the $n\times1$ vector $\rho(\bt)$.
Using~\eqref{eq:D_eq_FMF} and~\eqref{eq:matE}, equation~\eqref{eq:DE-lam-2} can be discretized to obtain the algebraic eigenvalue problem 
\begin{equation}\label{eq:matQ}
Q \bx = \lambda \bx, \quad Q=PFWF^\ast E, \quad \bx=\gamma(\bt).
\end{equation}
The algebraic eigenvalue problem~\eqref{eq:matQ} has $n$ eigenvalues which will be denoted by $\lambda_{j}$ for $j=-1,0,1,\ldots,n-2$, where $\lambda_{-1}=\lambda_{0}=0$. 

As discussed in~\S\ref{sec:disk}, we will compute approximate values of only the eigenvalues $\lambda_1,\ldots,\lambda_{\hat n}$ and their corresponding eigenvectors $\gamma_1(\bt),\ldots,\gamma_{\hat n}(\bt)$ for $\hat n<<n$. These approximate eigenvalues will be denoted by $\lambda_{1,n},\ldots,\lambda_{\hat n,n}$. We will use the MATLAB function {\tt eigs} to compute the eigenvalues of the $n\times n$ matrix $Q+I$ and then the eigenvalues of $Q$ are again obtained by subtracting $1$ from the computed eigenvalues.
The computed eigenvectors $\gamma_j(\bt)$ are normalized by
\[
\frac{2\pi}{n} \sum_{k=1}^{n} |\eta'(t_k)| \gamma_j^2(t_k)=1. 
\]
That is, we assume that $\int_0^{2\pi} \gamma_j(t)^2 | \eta'(t) | dt =1$.

Once the approximate eigenvector $\gamma_j(\bt)$ is computed, we compute 
\[
\mu_j (\bt) = E \gamma_j (\bt)
\]
and hence, by~\eqref{eq:f-bdv-2}, 
\[
f_j(\eta(\bt)) = \gamma_j(\bt) + \i\mu_j(\bt)
\]
is a discretization of a boundary value of an analytic function $f_j$ in the domain $G$ with $\Im f_j(\alpha)=0$ for bounded $G$ and $\Im f_j(\infty)=0$ for unbounded $G$. 
Approximate values of $f_j(z)$ for $z\in G$ can be computed by the Cauchy integral formula. The values of $f_j(z)$ can be computed by using MATLAB function {\tt fcau} from~\cite{Nas-ETNA}.
Thus $u_j(z) = \Re[f_j(z)]$ is an approximation of an eigenfunction for the Steklov eigenvalue problem~\eqref{eq:st-2} corresponding to the eigenvalue $\lambda_j$. The above proposed BIE method is summarized in Algorithm~\ref{alog:1}.

\begin{algorithm}[h]
\caption{Steklov eigenvalue computation}\label{alog:1}
\begin{enumerate}
	\item Parametrize $\Gamma$ by $\eta(t)$, $0 \le t \le 2\pi$, on $n$ equidistant grid points.
	\item Form the Nystr\"om matrices for the kernels $N$ and $M$.
	\item Assemble $E = -(I - B)^{-1}C$.
	\item Form $Q = P \cdot D \cdot E$ where $D = F W F^\ast$ and $P = \mathrm{diag}(\rho(\mathbf{t}))$.
	\item Compute eigenvalues of $Q + I$ via \texttt{eigs}; subtract 1.
	\item Recover eigenfunctions: $\mu_j = E \gamma_j$, extend to $G$ via Cauchy integral formula.
\end{enumerate}
\end{algorithm}

\section{Numerical examples}\label{sec:ex}

In this section, the BIE method presented in~\S\ref{sec:sim} is used to compute
eigenvalues and eigenfunctions for several smooth bounded and unbounded simply
connected domains. The numerical evidence is organized around three questions:
whether it reproduces known results on standard smooth test domains, what it
reveals about geometry-dependent spectral behavior in smooth
parameter-dependent families, and how its results compare with those of an
independent high-order volumetric solver on a benchmark geometry.
The numerical experiments are outlined in Table~\ref{tab:exp-roadmap}.

\begin{table}[ht]
\caption{Roadmap of the numerical experiments in Section~\ref{sec:ex}.}\label{tab:exp-roadmap}
\centering
\footnotesize
\begin{tabular}{@{}p{3.3cm}p{2.9cm}p{4.8cm}@{}}
\hline\noalign{\smallskip}
Experiment & Purpose & Main observation \\
\noalign{\smallskip}\hline\noalign{\smallskip}
\hyperref[ex:1]{Benchmark domains $G_1$ and $G_2$ (Ex.~\ref*{ex:1})} & Reproduce published smooth-domain data & Agreement with published spectra on two benchmark geometries and rapid observed convergence on analytic boundaries \\
\hyperref[ex:2]{Kite interior/exterior pair (Ex.~\ref*{ex:2})} & Test the unified bounded/exterior formulation & The same BIE framework resolves both interior and exterior problems, and the exterior spectrum enters the asymptotic regime earlier \\
\hyperref[ex:ell]{Ellipses with fixed perimeter (Ex.~\ref*{ex:ell})} & Study geometry dependence under smooth elongation & Interior branches exhibit crossings and non-monotonicity, while the computed exterior branches are monotone; classical inequalities are validated numerically \\
\hyperref[ex:star2]{Symmetric star-like domains (Ex.~\ref*{ex:star2})} & Track response under strong smooth deformation & The method remains effective as the geometry approaches a near-pinched regime and captures substantial changes in the low modes \\
\hyperref[sec:hpfemcomp]{$G_1$ $hp$-FEM benchmark (Sec.~\ref*{sec:hpfemcomp})} & Independent cross-validation & BIE and $hp$-FEM agree closely on the first ten area-scaled eigenvalues, and the FEM $p$-study provides an additional reference value \\
\noalign{\smallskip}\hline
\end{tabular}%
\end{table}
\subsection{Benchmark domains from the literature}

In this subsection, for comparison, we consider two examples with domains that have been
studied before in the literature. 

\begin{example}\label{ex:1}
We consider the following two bounded simply connected domains from~\cite[Figures~5 and~6]{Alhe}. The boundary of the first domain $G_1$ is parametrized by (see Figure~\ref{fig:domains} (left))
\begin{equation}
    \eta_1(t) = 8 + 5e^{\i t} + 0.5e^{6\i t}, \quad 0\le t\le 2\pi.
\end{equation}
The boundary of the second domain $G_2$ is parametrized by (see Figure~\ref{fig:domains} (right))
\begin{equation}
    \eta_2(t) = 0.4\i e^{\i t} \sqrt{\dfrac{2}{1.16-.84e^{2\i t}}}, \quad 0\le t\le 2\pi.  
\end{equation}
The domain $G_2$ here is a rotation of the domain in~\cite[Figure~6]{Alhe} and hence the domain $G_2$ and the domain in~\cite[Figure~6]{Alhe} have the same Steklov spectrum.
\end{example}

We use the above method with $n=2^{10}$ grid points to compute the first $10$
nonzero eigenvalues $\lambda_k$ for both domains $G_1$ and $G_2$.
To compare our results with the results presented in~\cite{Alhe}, Table~\ref{tab:ex1} presents the first $10$ nonzero area-scaled eigenvalues $\tilde{\lambda}_k = \lambda_k\sqrt{|G|}$ where $|G|$ is the area of the domain $G$. 
Table~\ref{tab:ex1} also presents the results obtained with $2^{10}$ grid points in~\cite[Table~3, Table~4]{Alhe}.
As shown in the table, our method agrees at least with ten decimal places with the results presented in~\cite{Alhe}.
The relative error computed by~\eqref{eq:ea_err} for each of the first $10$ nonzero Steklov eigenvalues versus the number of grid points $n$ for both domains $G_1$ and $G_2$ is presented in Figure~\ref{fig:dom1_error}. 
Since the exact eigenvalues are unknown, we consider the eigenvalues $\lambda_k$
calculated with $2^{10}$ grid points as reference values. The approximate
eigenvalues $\lambda_{k,n}$ are then calculated for $20 \le n \le 400$ grid
points. Figure~\ref{fig:dom1_error} suggests exponential-like convergence in
this benchmark, which is consistent with the analyticity of the boundaries.

The number of iterations and the CPU time (sec) required for the convergence of the MATLAB function {\tt eigs} for the first $10$ nonzero eigenvalues for the domains $G_1$ and $G_2$ are presented in Figure~\ref{fig:dom1_time}. 
The condition number of the matrix $Q+I$, for the matrix $Q$ defined in~\eqref{eq:matQ}, is also presented in Figure~\ref{fig:dom1_time}. 
Figure~\ref{fig:dom1_time} indicates that, although the condition number of the
matrix $Q+I$ increases with $n$, the number of iterations in {\tt eigs} remains
nearly independent of $n$ over the tested range. To illustrate the asymptotic
behavior of the Steklov eigenvalues~\eqref{eq:asym_b},
Figures~\ref{fig:dom1_eig} and~\ref{fig:dom2_eig} present the approximate
nonzero eigenvalues $\lambda_k$ (for $1\le k\le 10$, $30\le k\le 40$, and
$90\le k\le 100$) computed using the above method with $n=2^{10}$ for both
$G_1$ and $G_2$. The plots indicate that the eigenvalues for the domain $G_2$
enter the asymptotic regime earlier than those for the domain $G_1$.  
The eigenmodes and their boundary traces corresponding to the first $8$ nonzero
eigenvalues computed with $n=2^{10}$ for both domains $G_1$ and $G_2$ are shown
in Figures~\ref{fig:dom_1_eigf} and~\ref{fig:dom_1_eigf_b}. The boundary traces
reflect the oscillatory structure of the corresponding harmonic modes and
provide a complementary view of the computed eigenfunctions on $\Gamma$.

\begin{table}[ht]
\caption{The first $10$ nonzero area-scaled eigenvalues $\tilde{\lambda}_k$ for the domains $G_1$ and $G_2$ in Example~\ref{ex:1} obtained with $n=2^{10}$ and the corresponding eigenvalues from~\cite{Alhe}.}\label{tab:ex1}
\centering
\begin{tabular}{ccccc}
\hline\noalign{\smallskip}
  & $G_1$ & \cite[Table 3]{Alhe} & $G_2$ & \cite[Table 4]{Alhe} \\
\noalign{\smallskip}\hline\noalign{\smallskip}
 $\tilde\lambda_1$    & 1.61465185265077 & 1.61465185265076 & 0.82158389917705 & 0.82158389917723 \\
 $\tilde\lambda_2$    & 1.61465185265086 & 1.61465185265091 & 2.88853778576938 & 2.88853778576941 \\
 $\tilde\lambda_3$    & 2.97737736702950 & 2.97737736702987 & 2.94484661549781 & 2.94484661549826 \\
 $\tilde\lambda_4$    & 2.97737736702974 & 2.97737736702991 & 3.34172628966417 & 3.34172628966405\\
 $\tilde\lambda_5$    & 5.48337898612383 & 5.48337898612405 & 4.55074794910963 & 4.55074794911011\\
 $\tilde\lambda_6$    & 5.48337898612393 & 5.48337898612444 & 5.03673963982603 & 5.03673963982608\\
 $\tilde\lambda_7$    & 6.70773879741621 & 6.70773879741643 & 6.23305352696130 & 6.23305352696119\\
 $\tilde\lambda_8$    & 6.70773879741642 & 6.70773879741657 & 6.32549098892433 & 6.32549098892451 \\
 $\tilde\lambda_9$    & 7.65773980917837 & 7.65773980917866 & 7.80580771944321 & 7.80580771944354\\
 $\tilde\lambda_{10}$ & 9.01958292273808 & 9.01958292273825 & 7.90841610595226 & 7.90841610595226\\
\noalign{\smallskip}\hline
\end{tabular}
\end{table}

\begin{figure}[ht] %example_table3_E.m, example_table4_E.m
	\centering{
	\hfill{\includegraphics[scale=0.28]{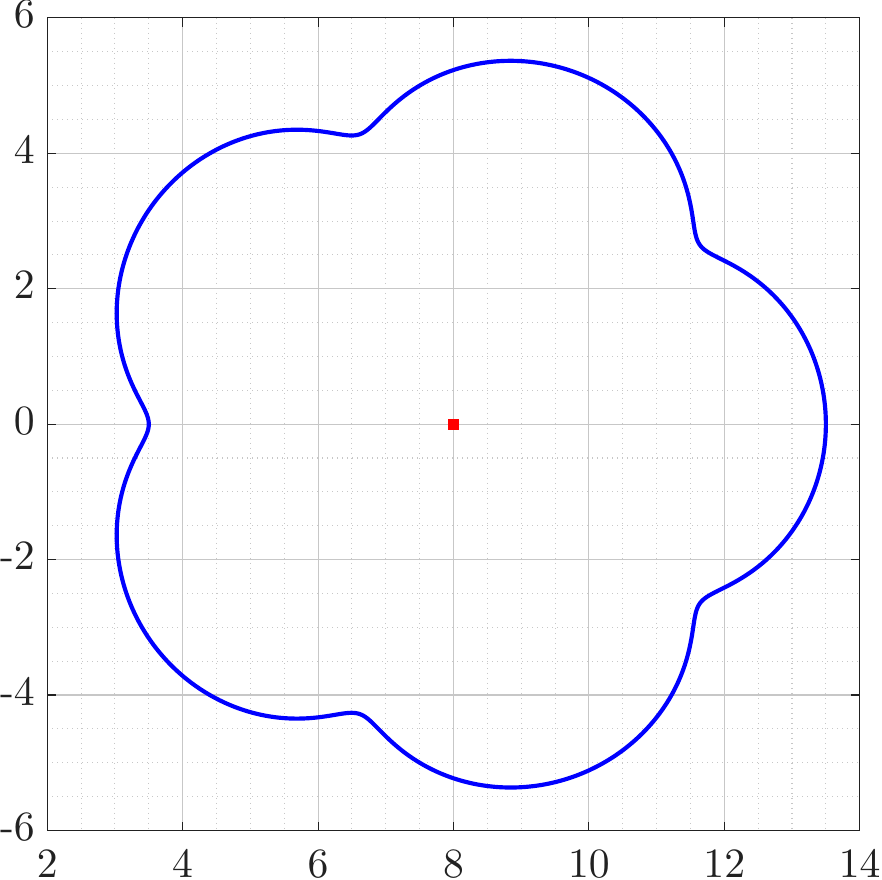}}
    \hfill{\includegraphics[scale=0.28]{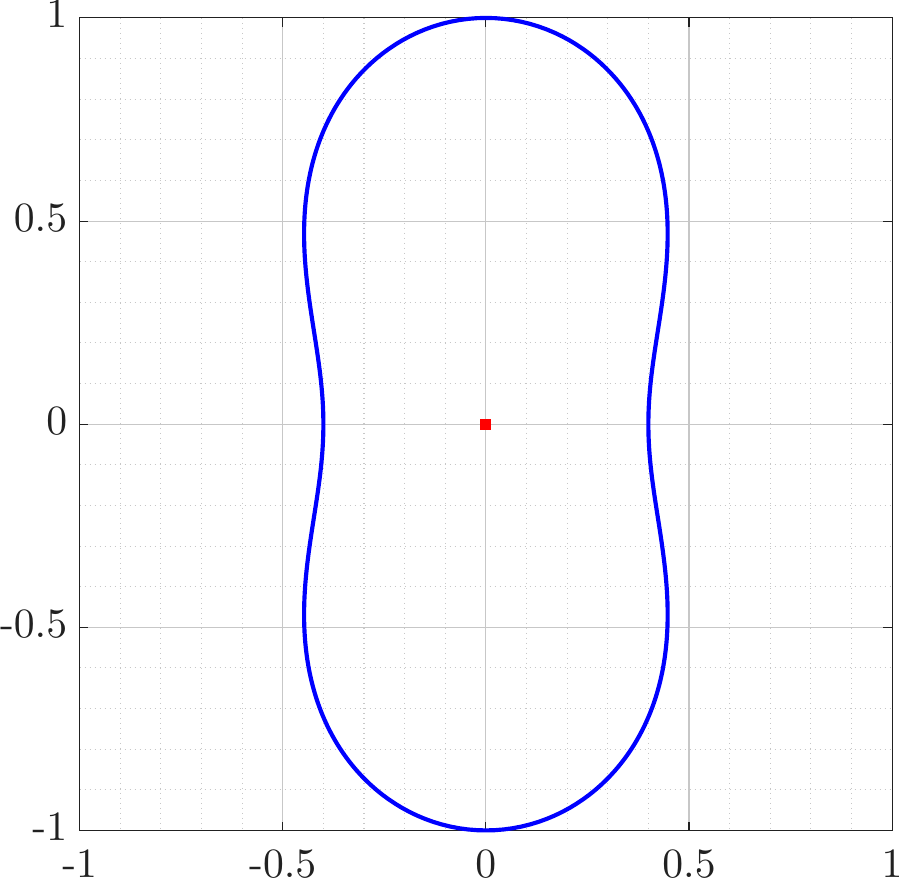}}
    \hfill}
\caption{The boundary $\Gamma$ for the domain $G_1$ (left) and the domain $G_2$ (right) for Example~\ref{ex:1}.}
	\label{fig:domains}
\end{figure}

\begin{figure}[ht] 
	\centering{
    \hfill\scalebox{0.3}{\includegraphics[trim=0 0cm 0 0,clip]{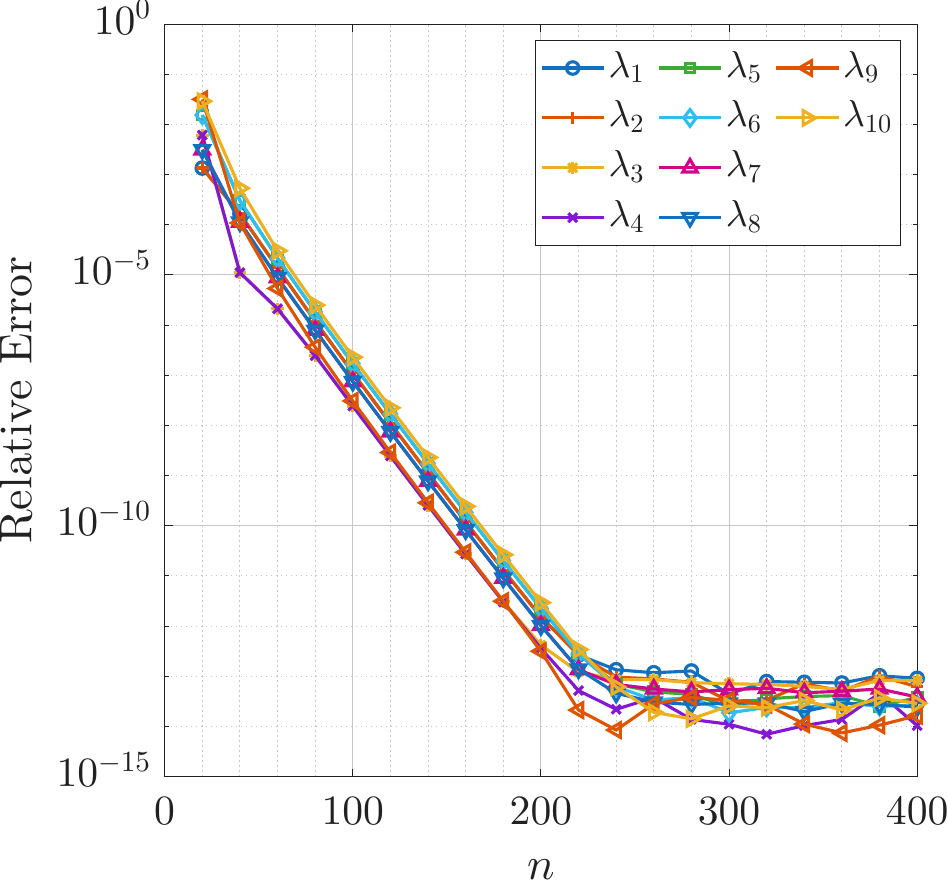}}
		\hfill\scalebox{0.3}{\includegraphics[trim=0 0cm 0 0,clip]{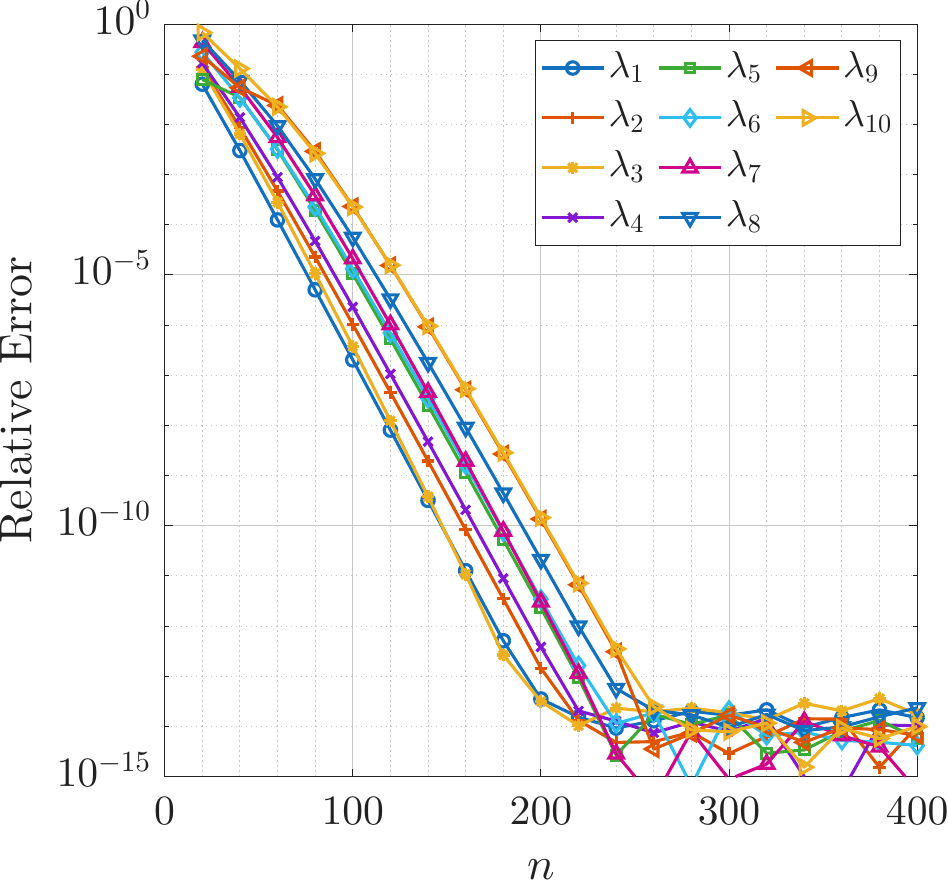}}
    \hfill}
\caption{The relative error for the first $10$ nonzero eigenvalues for the domains $G_1$ (left) and $G_2$ (right) in Example~\ref{ex:1}.}
	\label{fig:dom1_error}
\end{figure}

\begin{figure}[ht] 
	\centering{
    \hfill\scalebox{0.25}{\includegraphics[trim=0 0.0cm 0 0,clip]{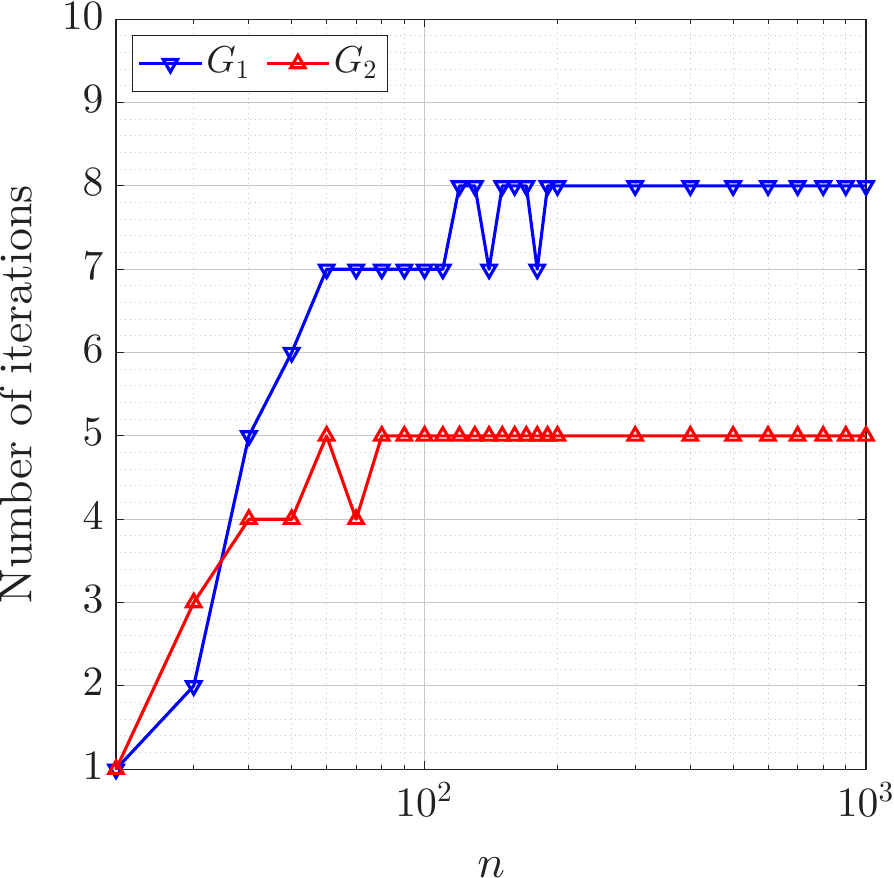}}
		\hfill\scalebox{0.25}{\includegraphics[trim=0 0.0cm 0 0,clip]{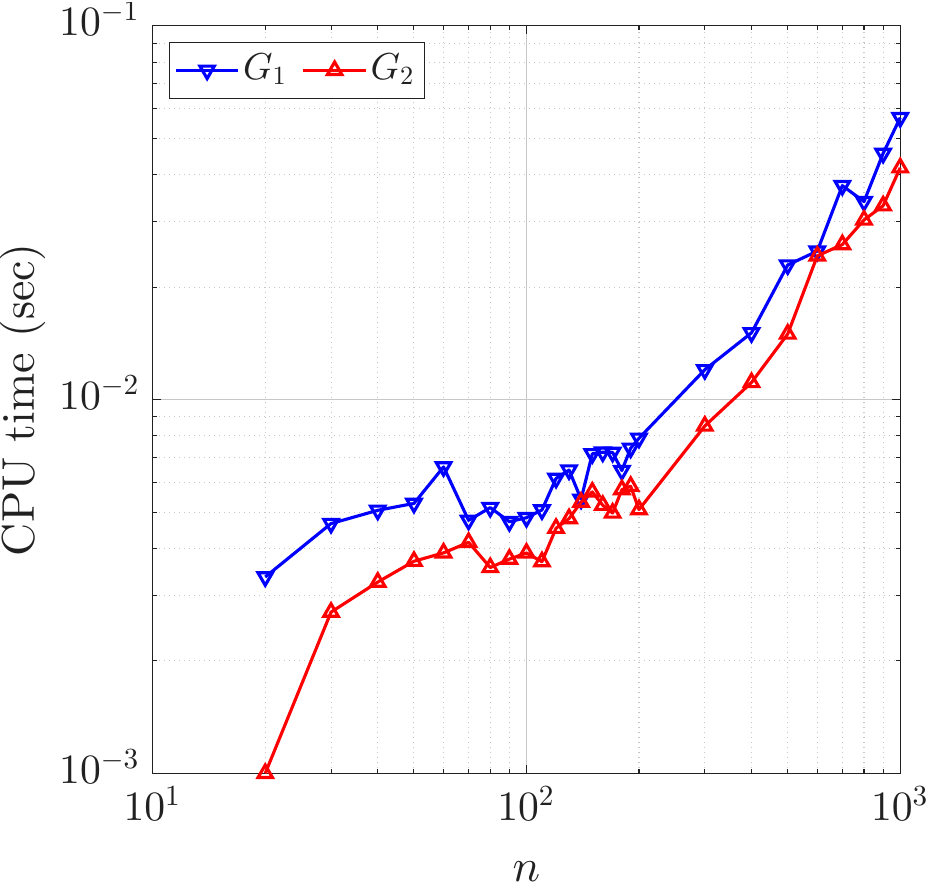}}
		\hfill\scalebox{0.25}{\includegraphics[trim=0 0.0cm 0 0,clip]{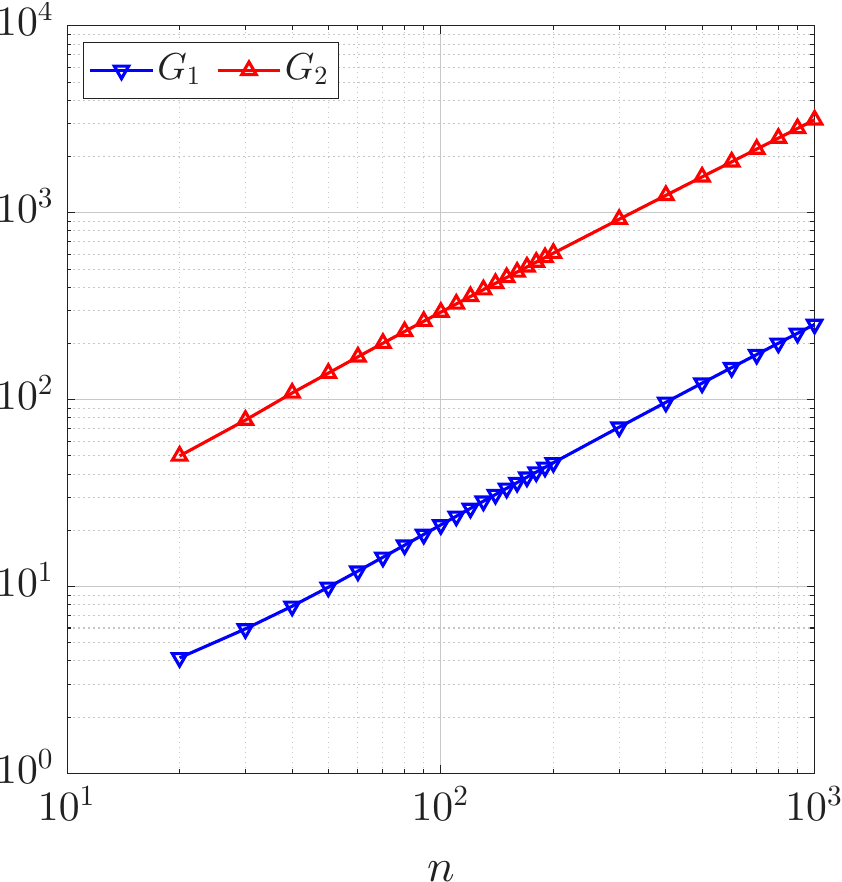}}
    \hfill}
\caption{The number of iterations and CPU time (sec) required for the convergence of the MATLAB function {\tt eigs} for the first $10$ nonzero eigenvalues and the condition number of the matrix $Q+I$ for the domains $G_1$ and $G_2$ in Example~\ref{ex:1}.}
	\label{fig:dom1_time}
\end{figure}

\begin{figure}[ht] %example_new_ext.m
	\centering{
	\scalebox{0.25}{\includegraphics{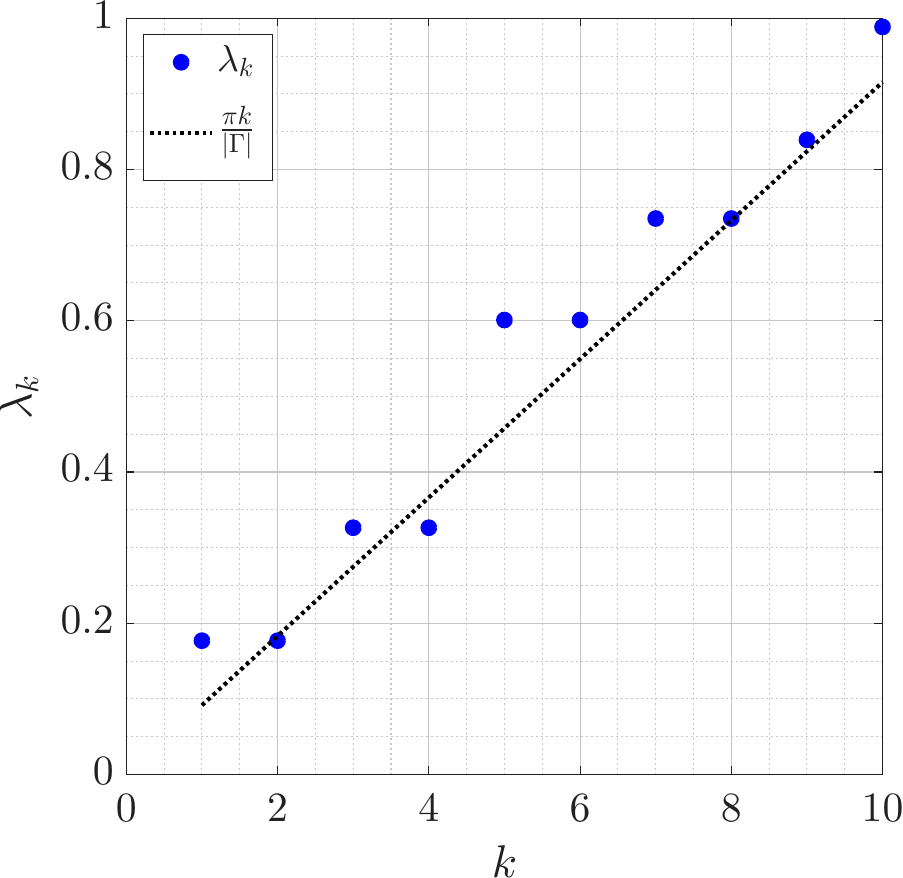}}
	\hfill\scalebox{0.25}{\includegraphics{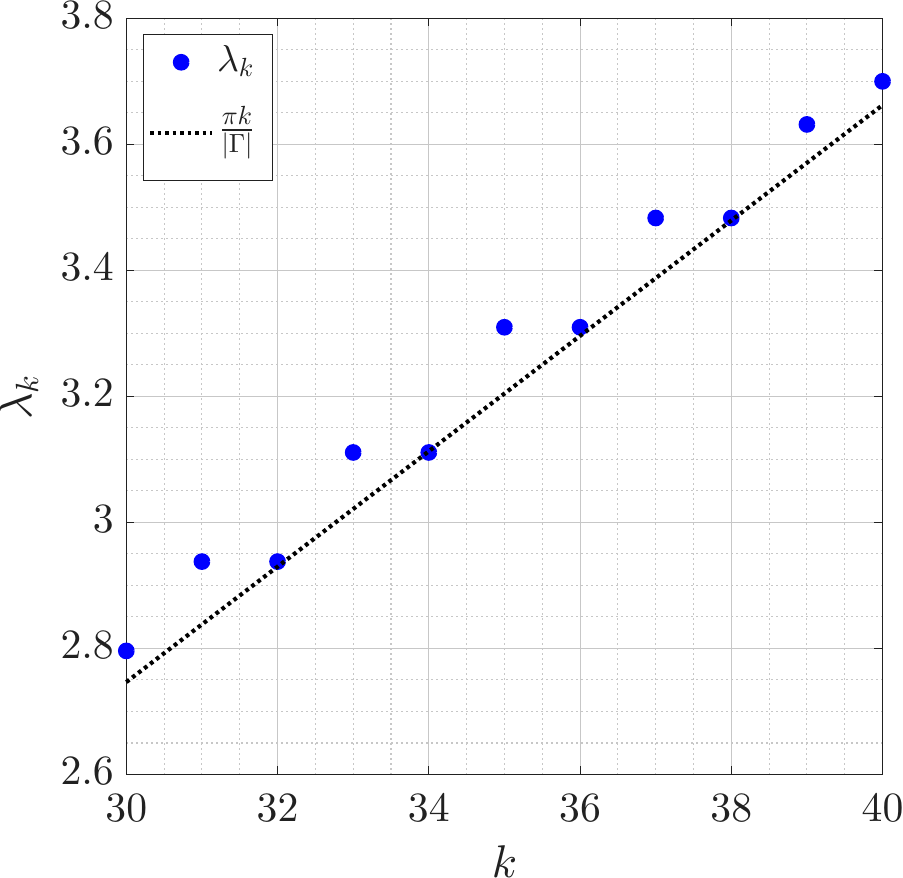}}
	\hfill\scalebox{0.25}{\includegraphics{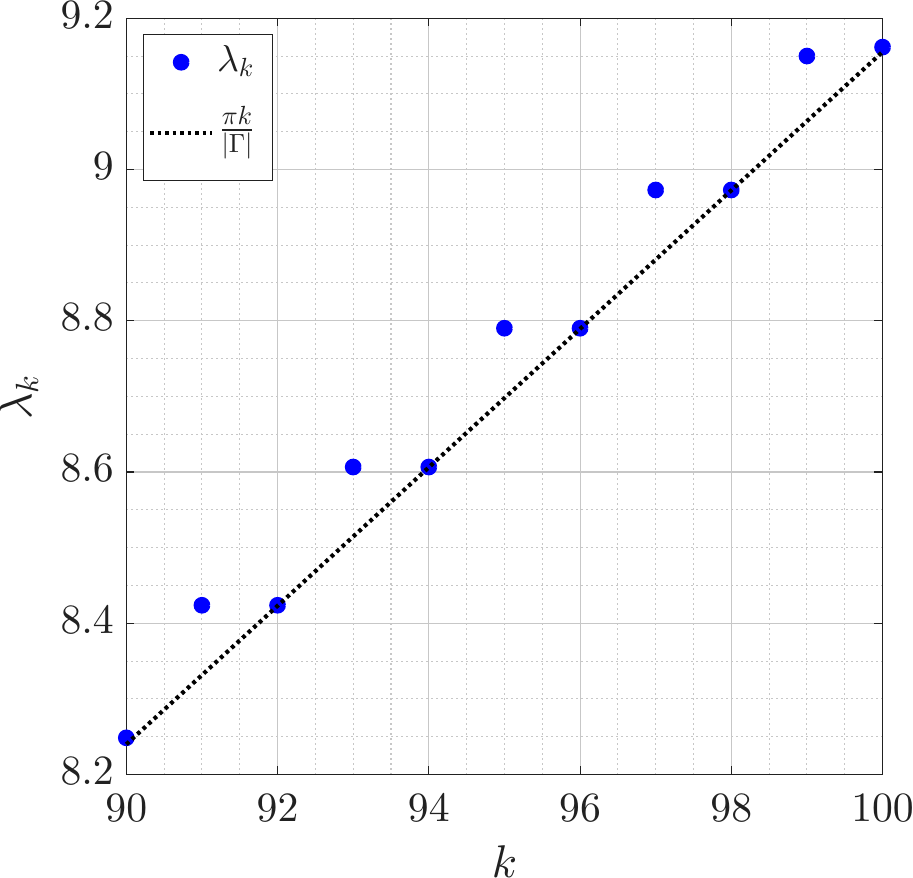}}
   }
\caption{The eigenvalues $\lambda_k$ for $1\le k \le 10$ (left), $30 \le k \le 40$ (center), and $90 \le k \le 100$ (right) for the domain $G_1$ in Example~\ref{ex:1}.}
	\label{fig:dom1_eig}
\end{figure}

\begin{figure}[ht] %example_new_ext.m
	\centering{
	\scalebox{0.25}{\includegraphics{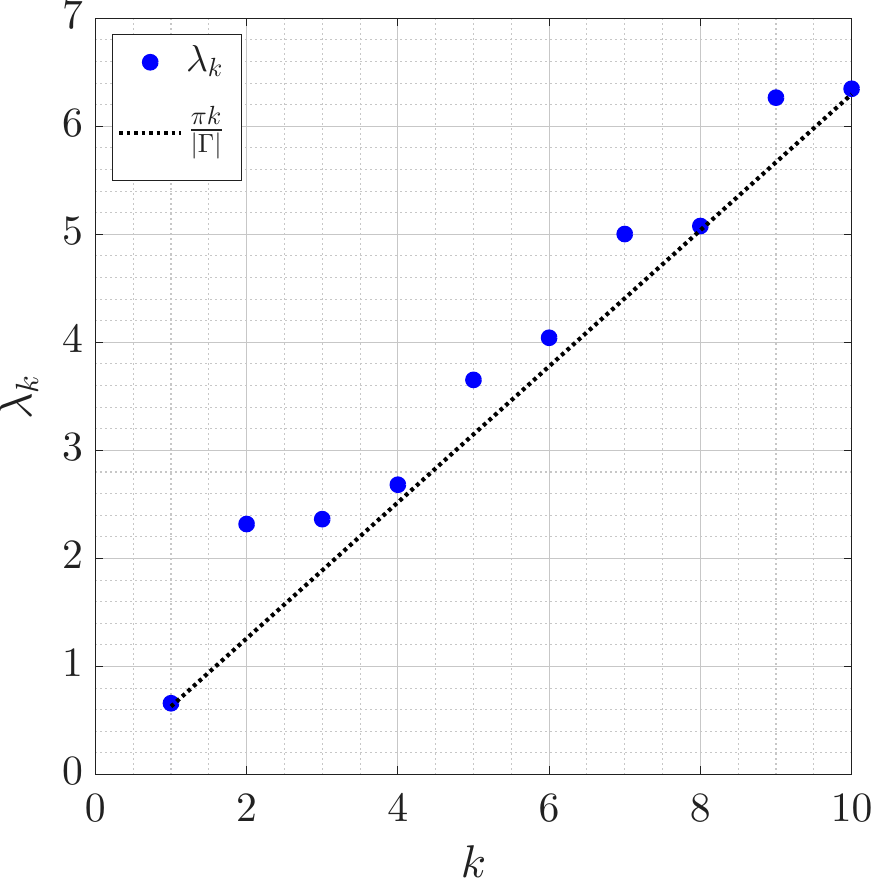}}
	\hfill\scalebox{0.25}{\includegraphics{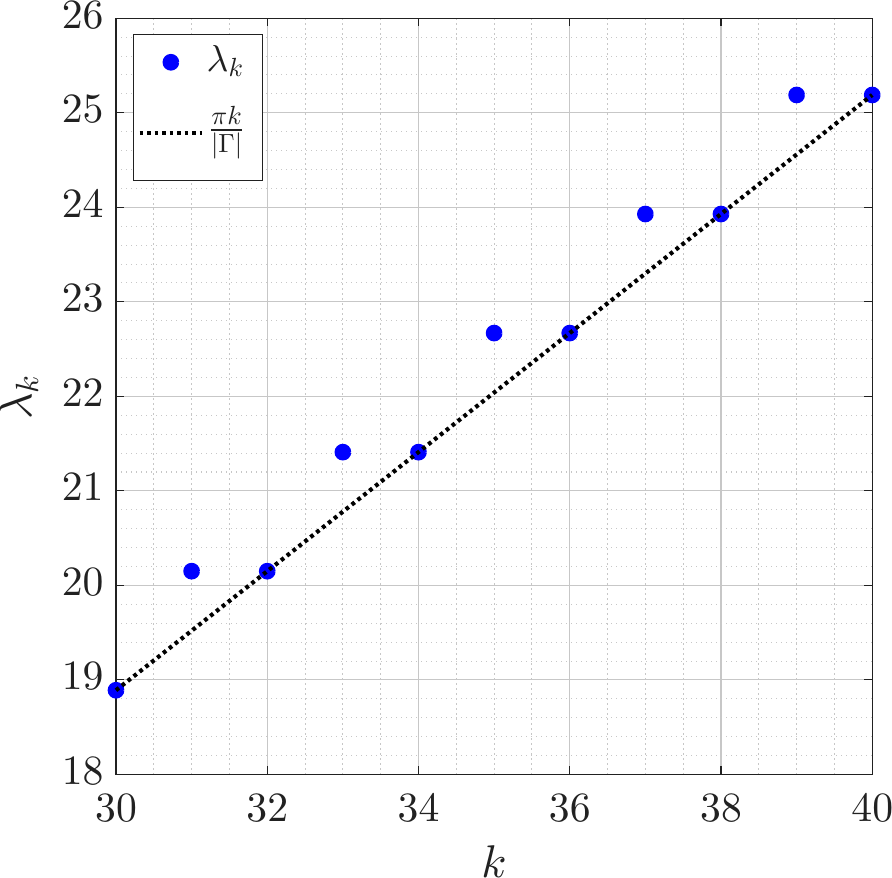}}
	\hfill\scalebox{0.25}{\includegraphics{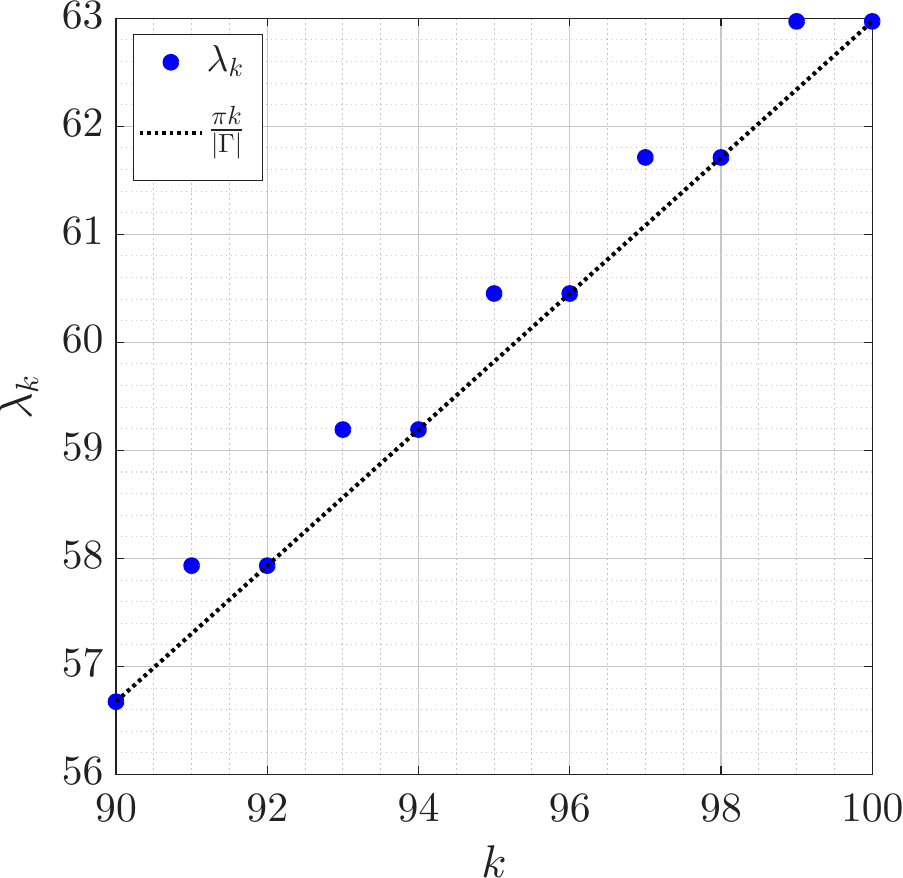}}
    }
\caption{The eigenvalues $\lambda_k$ for $1\le k \le 10$ (left), $30 \le k \le 40$ (center), and $90 \le k \le 100$ (right) for the domain $G_2$ in Example~\ref{ex:1}.}
	\label{fig:dom2_eig}
\end{figure}

\begin{figure}[htb] %
	\centerline{
		\scalebox{0.3}{\includegraphics[trim=2.0cm 0.75cm 2.0cm 1.00cm,clip]{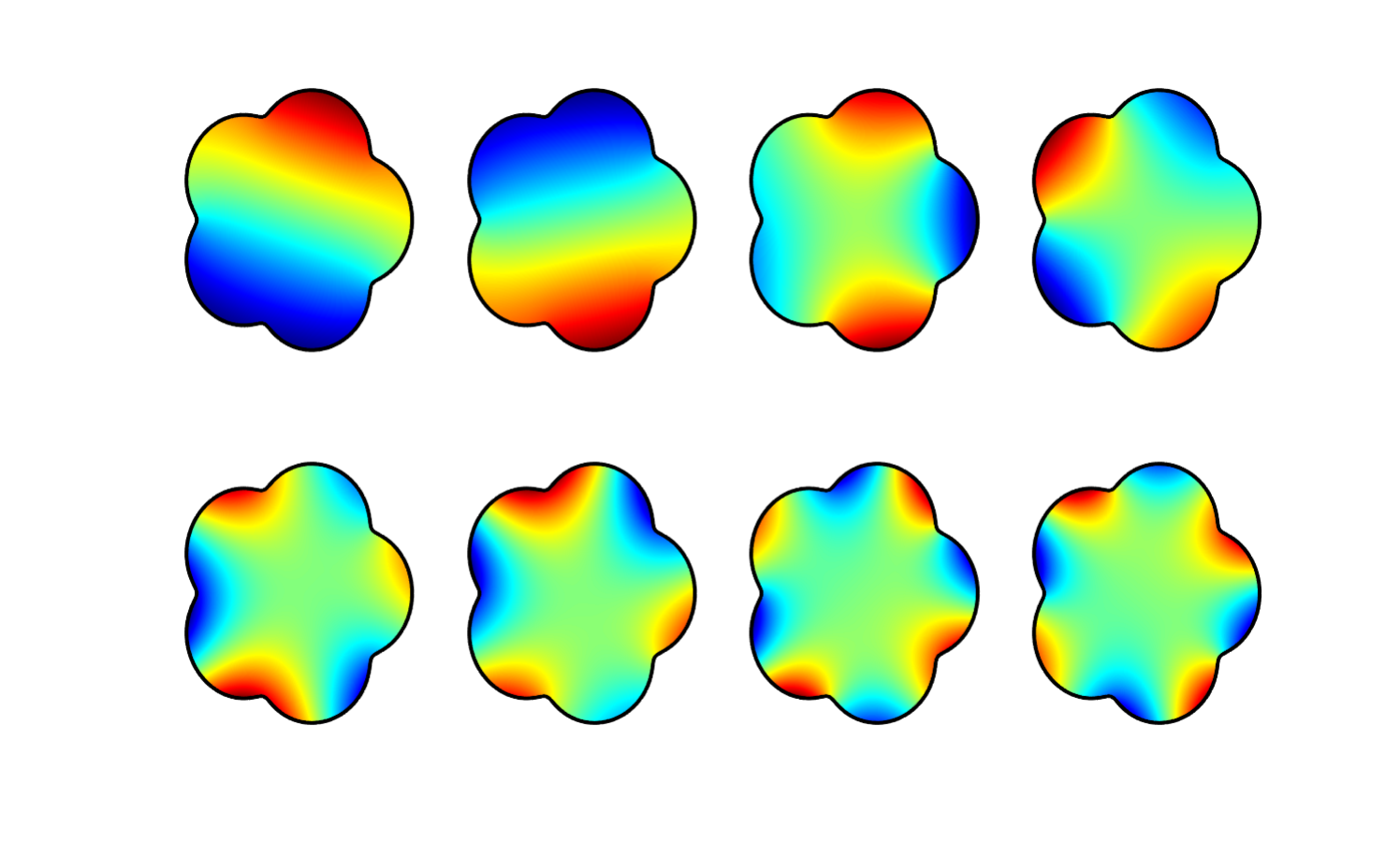}}
		\hfill
		\scalebox{0.3}{\includegraphics[trim=2.0cm 0.75cm 2.0cm 1.00cm,clip]{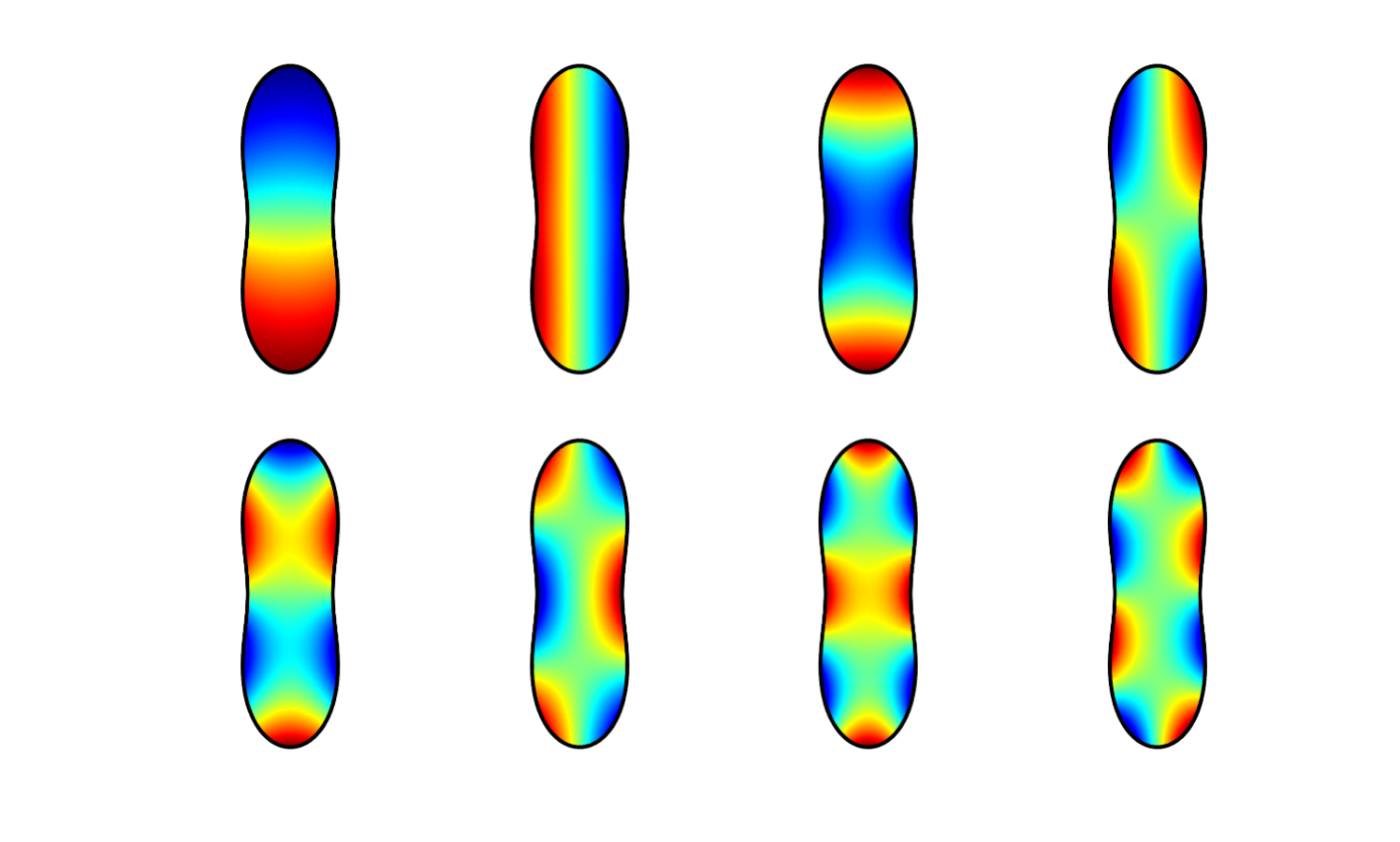}}
		}
\caption{Eigenmodes for the first $8$ nonzero eigenvalues $\lambda_1,\lambda_2,\lambda_3,\lambda_4$ (first row, from left to right) and $\lambda_5,\lambda_6,\lambda_7,\lambda_8$ (second row, from left to right) for the domains $G_1$ (left) and $G_2$ (right) in Example~\ref{ex:1}. }
	\label{fig:dom_1_eigf}
\end{figure}

\begin{figure}[htb] %
	\centerline{
		\scalebox{0.28}{\includegraphics[trim=0.25cm 0.25cm 0.25cm 0.25cm,clip]{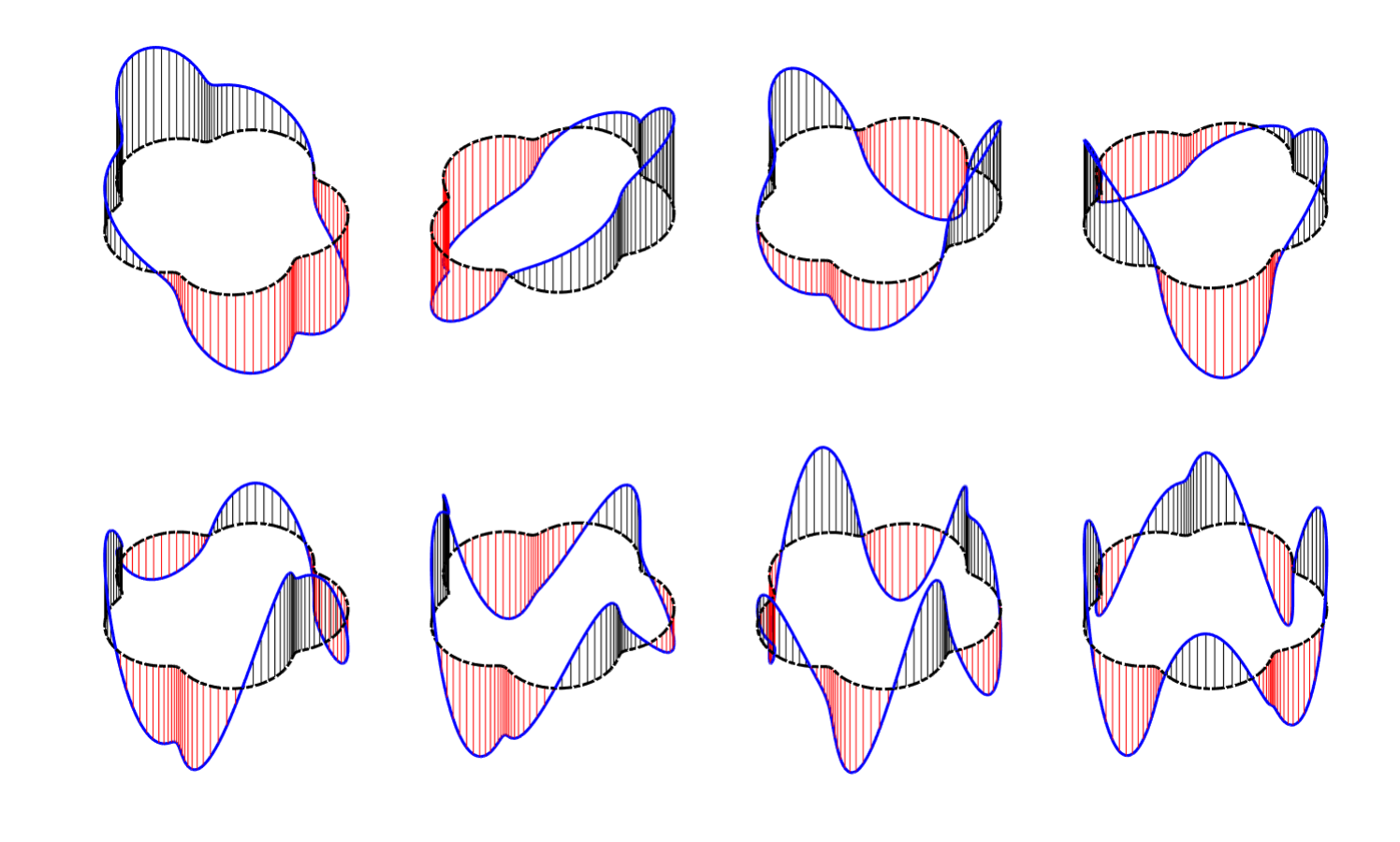}}
		\hfill
		\scalebox{0.28}{\includegraphics[trim=0.25cm 0.25cm 0.25cm 0.25cm,clip]{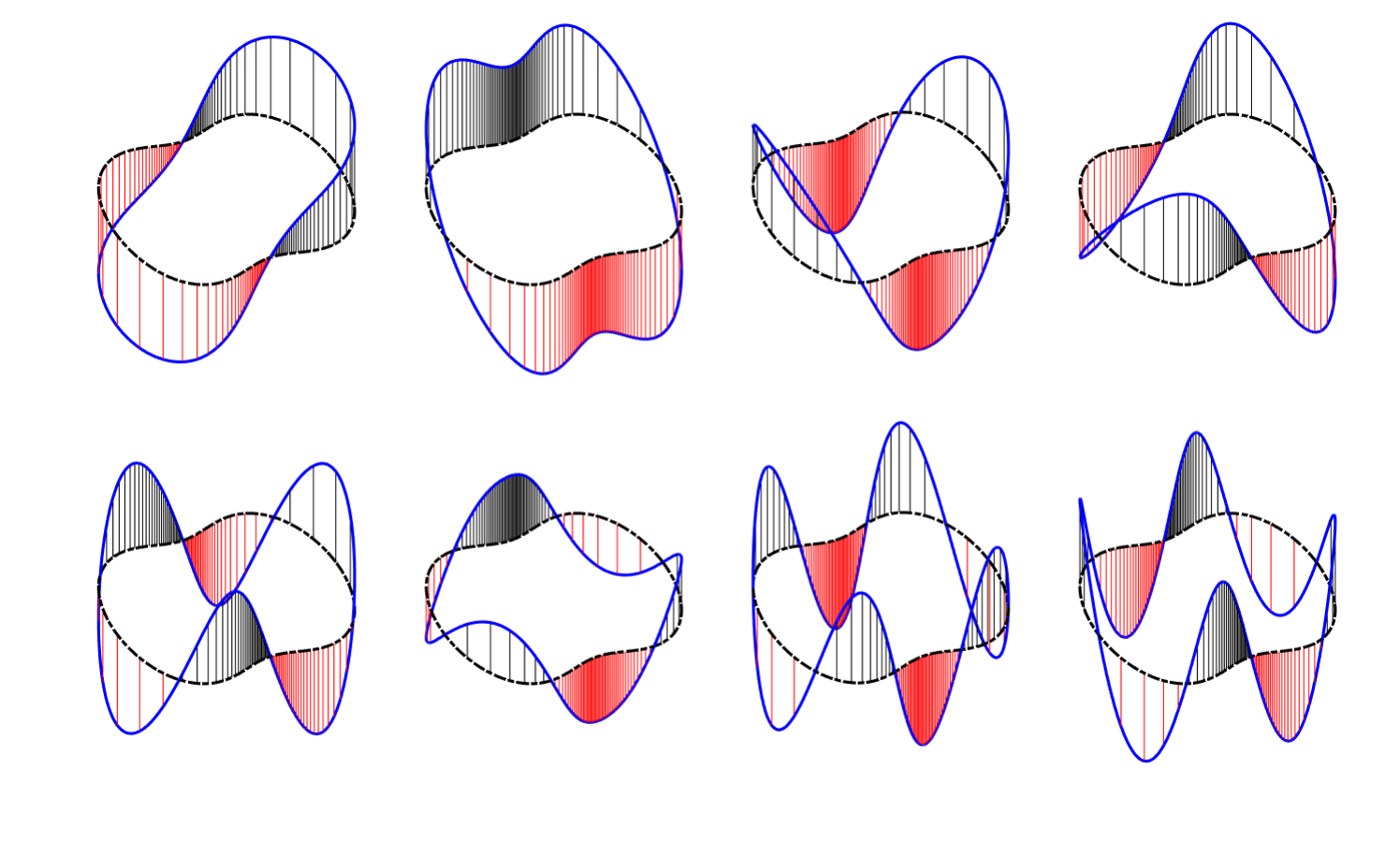}}
		}
\caption{Boundary traces of the eigenmodes corresponding to the first $8$ nonzero eigenvalues $\lambda_1,\lambda_2,\lambda_3,\lambda_4$ (first row, from left to right) and $\lambda_5,\lambda_6,\lambda_7,\lambda_8$ (second row, from left to right) for the domains $G_1$ (left) and $G_2$ (right) in Example~\ref{ex:1}. }
	\label{fig:dom_1_eigf_b}
\end{figure}

\begin{example}\label{ex:2}
We consider the following bounded and unbounded simply connected domains from~\cite[Figure~4]{Bun}. Let the bounded simply connected domain $G_1$ and the unbounded simply connected domain $G_2$ be the interior and exterior of an asymmetric ``kite'' $\Gamma$, respectively (see Figure~\ref{fig:dom3}). The orientation of $\Gamma$ for bounded domain $G_1$ is assumed to be counterclockwise and $\Gamma$ is parametrized by
\begin{equation}
\eta(t) = 1.5 \cos(t)+0.7 \cos(2t)-0.4+\i( 1.5\sin(t)-0.3\cos(t)), 
\end{equation}
$0 \leq t \le 2\pi$. For unbounded domain $G_2$, the orientation of $\Gamma$ is assumed to be clockwise and hence $\Gamma$ will be parametrized by
\begin{equation}
\eta(t) = 1.5\cos(t)+0.7\cos(2t)-0.4+\i( -1.5\sin(t)-0.3\cos(t)),
\end{equation}
$0 \leq t \le 2\pi$.
\end{example}

\begin{figure}[ht] %example_new_ext.m, example_new_ext_U.m
	\centering{
	\hfill{\includegraphics[scale=0.3]{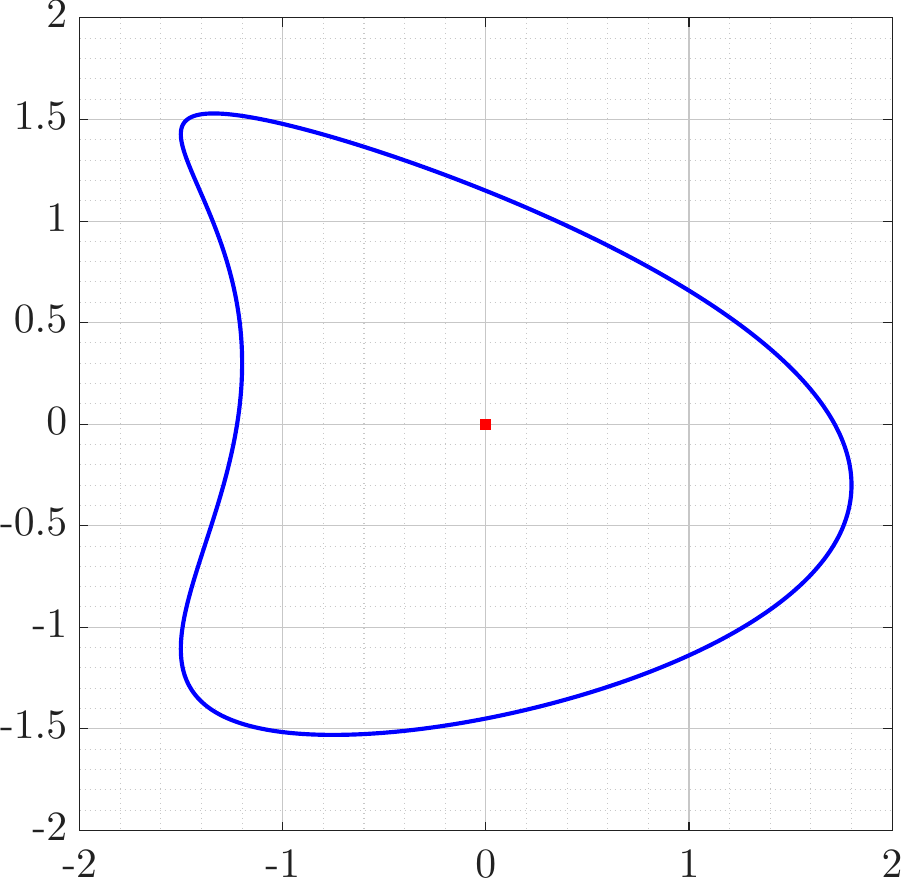}}
    \hfill}
\caption{The boundary $\Gamma$ for Example~\ref{ex:2}}
	\label{fig:dom3}
\end{figure}

For this example, we present in Table~\ref{tab:ex2} the first $9$ nonzero eigenvalues $\lambda_k$ calculated using the presented method with $n=2^{10}$ grid points to compare our results with those presented in~\cite[Table 1]{Bun}.
As in the previous example, the approximate eigenvalues $\lambda_k$ calculated with $n=2^{10}$ grid points will be considered as the exact values. The relative error for each of the approximate values $\lambda_{k,n}$ of the first $10$ nonzero Steklov eigenvalues for $20\le n\le 400$ will be computed using~\eqref{eq:ea_err}. 
Figure~\ref{fig:dom3_bou_error} shows these relative errors for the bounded
domain $G_1$ (left) and the unbounded domain $G_2$ (right) versus the number of
grid points $n$. For both domains, the error level is approximately $10^{-14}$
once $n \ge 150$ in this experiment. Figure~\ref{fig:kite_time} presents the
number of iterations and the CPU time (sec) required for the convergence of the
MATLAB function {\tt eigs} for the first $10$ nonzero eigenvalues as well as
the condition number of the matrix $Q+I$ for both domains $G_1$ and $G_2$. The
iteration counts are again nearly independent of $n$ over the tested range.
The approximate nonzero eigenvalues $\lambda_k$ (for $1\le k\le 10$, $30\le k\le 40$, and $90\le k\le 100$) computed using the above method with $n=2^{10}$ for both domains $G_1$ and $G_2$ are presented in Figure~\ref{fig:kite_eig}. This figure illustrates that the eigenvalues start following the asymptotic behavior~\eqref{eq:asym_b} for the unbounded domain $G_2$ earlier than the bounded domain $G_1$.  
The eigenmodes corresponding to the first eight nonzero eigenvalues and their boundary traces computed with $n=2^{10}$ for both domains $G_1$ and $G_2$ are shown in Figures~\ref{fig:kite_eigf} and~\ref{fig:kite_eigf_b}.

\begin{table}[ht]
\caption{The first $9$ nonzero eigenvalues $\lambda_k$ for the bounded domain $G_1$ and unbounded domain $G_2$ in Example~\ref{ex:2} obtained with $n=2^{10}$ and the corresponding eigenvalues from~\cite{Bun}.}\label{tab:ex2}
\centering
\begin{tabular}{ccccc}
\hline\noalign{\smallskip}
  & $G_1$ & \cite[Table 1]{Bun} & $G_2$ & \cite[Table 1]{Bun} \\
\noalign{\smallskip}\hline\noalign{\smallskip}
 $\lambda_1$ & 0.40305996416748 & 0.403 & 0.54467770056080 & 0.545 \\
 $\lambda_2$ & 0.52424200142763 & 0.524 & 0.57081699412402 & 0.571 \\
 $\lambda_3$ & 1.18270198665242 & 1.183 & 1.12953414359678 & 1.130 \\
 $\lambda_4$ & 1.38370805250322 & 1.384 & 1.30930577399346 & 1.309 \\
 $\lambda_5$ & 1.72113574153495 & 1.721 & 1.74564067269481 & 1.746 \\
 $\lambda_6$ & 2.01779563230560 & 2.018 & 1.82146960379857 & 1.821 \\
 $\lambda_7$ & 2.20083979220023 & 2.201 & 2.29287781627365 & 2.293 \\
 $\lambda_8$ & 2.70613635836981 & 2.706 & 2.44997484632459 & 2.450 \\
 $\lambda_9$ & 2.78466524903348 & 2.785 & 2.90350756743372 & 2.903 \\
\noalign{\smallskip}\hline
\end{tabular}
\end{table}

\begin{figure}[ht] %example_new_ext_err_E.m, example_new_ext_U_err_E.m
	\centering{
	\hfill{\includegraphics[scale=0.32]{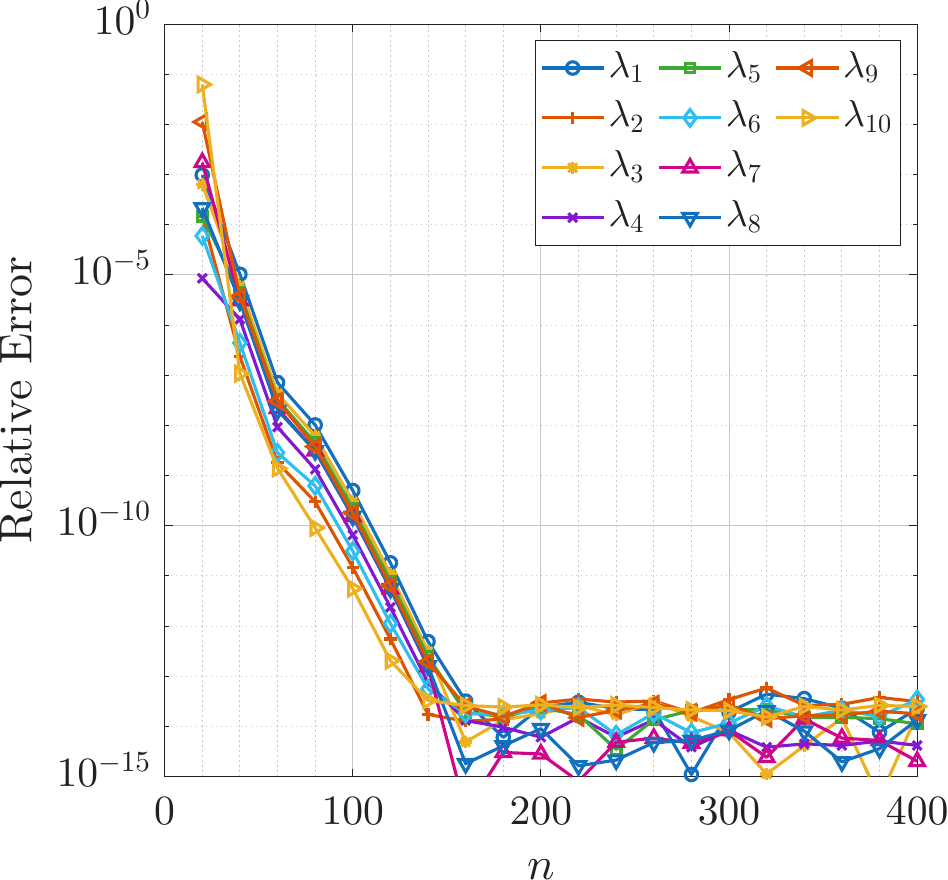}}
    \hfill{\includegraphics[scale=0.32]{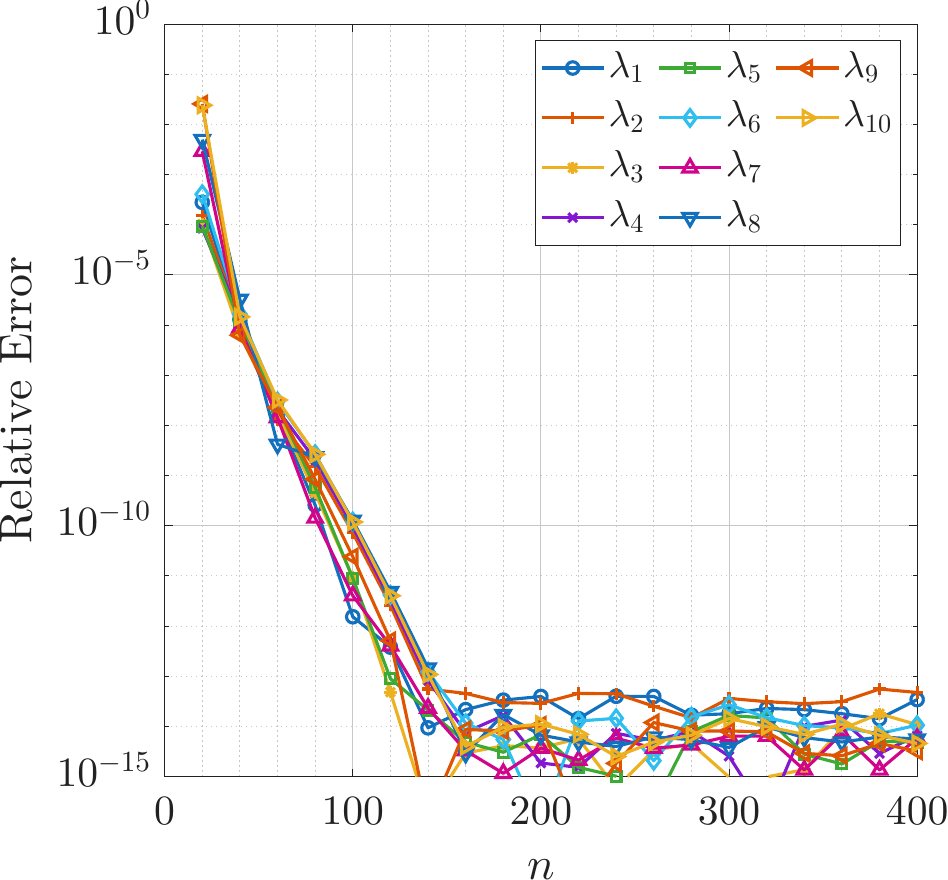}}
    \hfill}
\caption{The relative error for the first 10 nonzero eigenvalues for bounded domain $G_1$ (left) and unbounded domain $G_2$ (right) in Example~\ref{ex:2}.}
	\label{fig:dom3_bou_error}
\end{figure}

\begin{figure}[ht] 
	\centering{
    \hfill\scalebox{0.25}{\includegraphics[trim=0 0cm 0 0,clip]{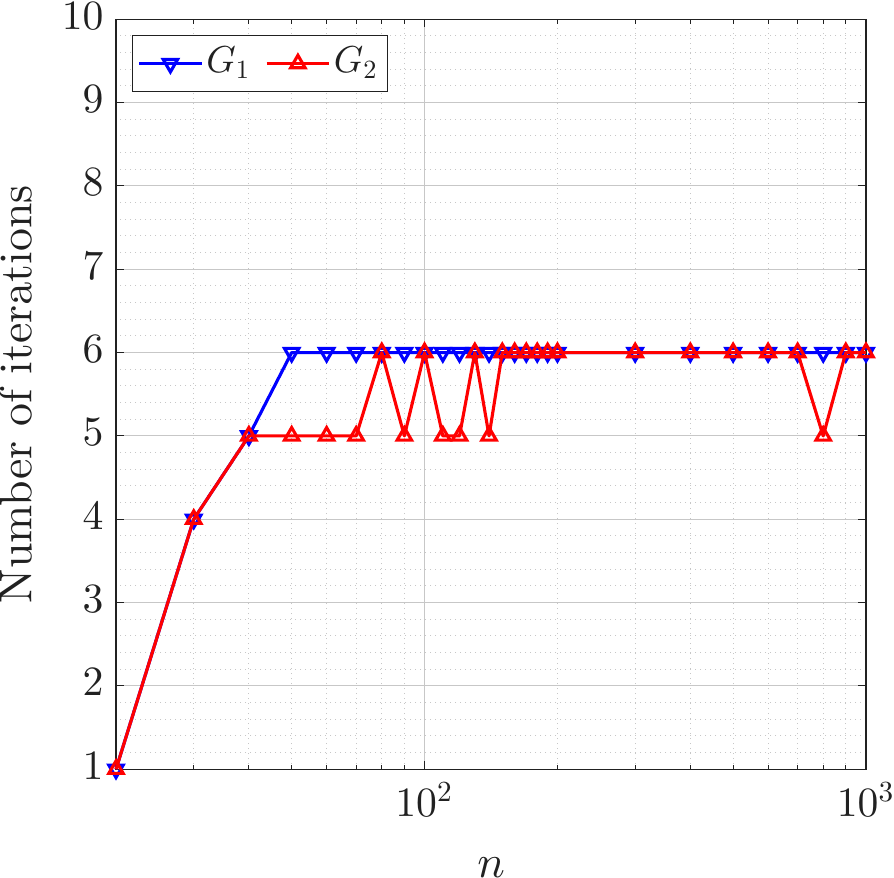}}
		\hfill\scalebox{0.25}{\includegraphics[trim=0 0cm 0 0,clip]{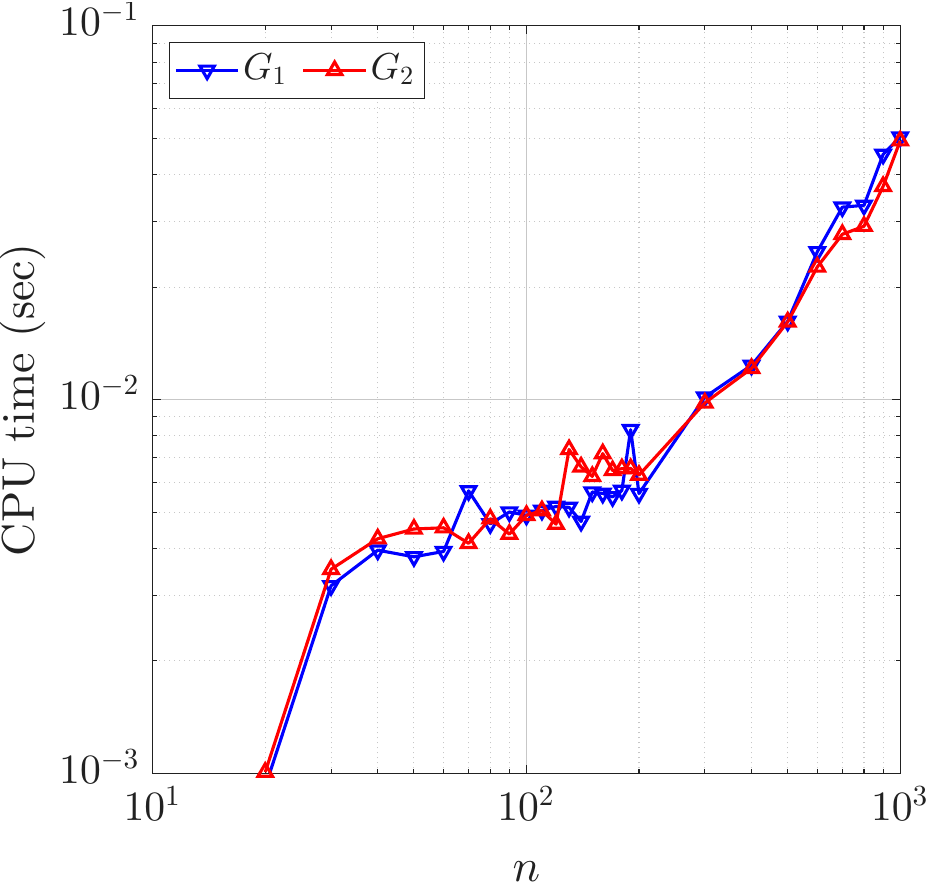}}
		\hfill\scalebox{0.25}{\includegraphics[trim=0 0cm 0 0,clip]{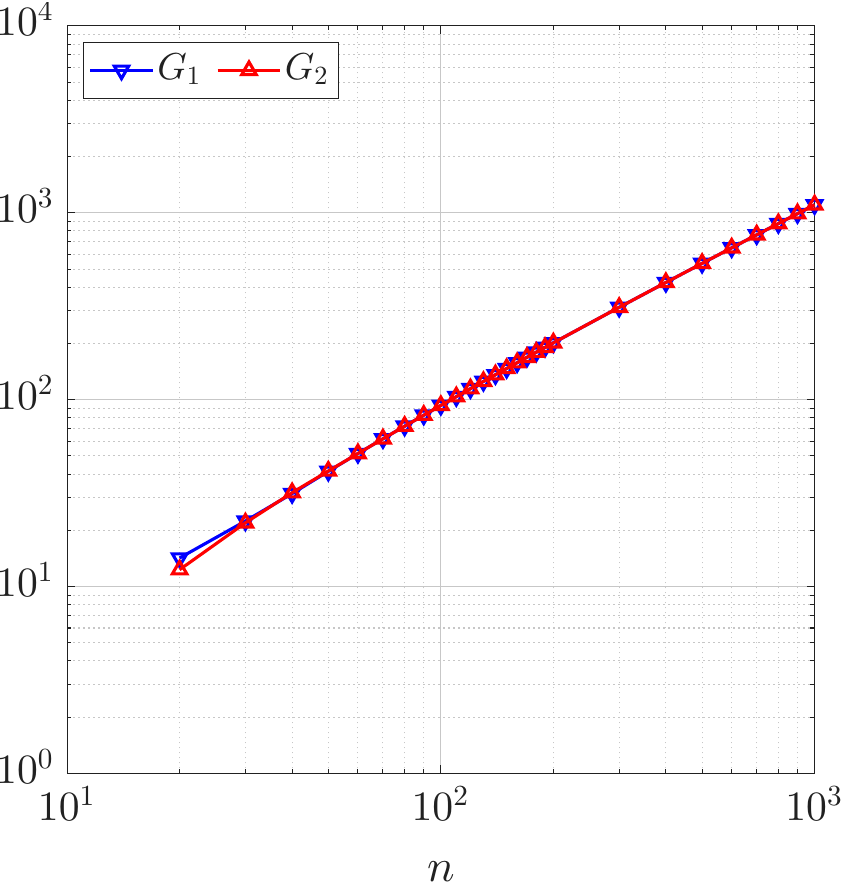}}
    \hfill}
\caption{The number of iterations and CPU time (sec) required for the convergence of the MATLAB function {\tt eigs} for the first $10$ nonzero eigenvalues and the condition number of the matrix $Q+I$ for the bounded domain $G_1$ and the unbounded domain $G_2$ in Example~\ref{ex:2}.}
	\label{fig:kite_time}
\end{figure}

\begin{figure}[ht] 
	\centering{
	\scalebox{0.25}{\includegraphics{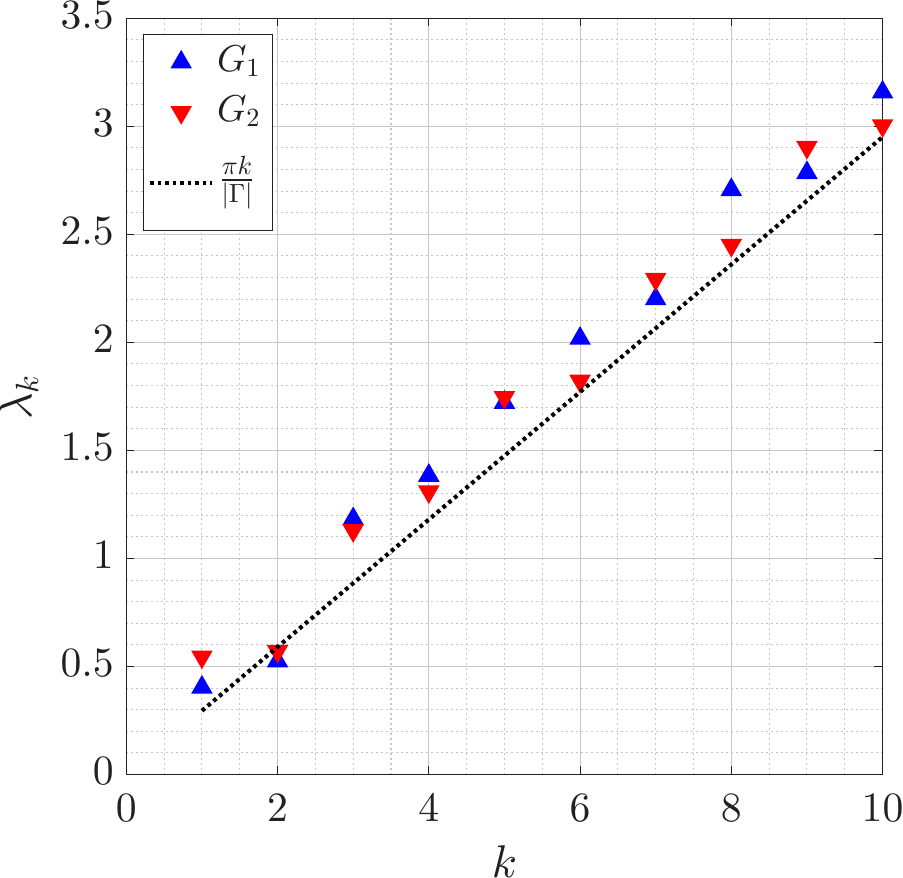}}
	\hfill\scalebox{0.25}{\includegraphics{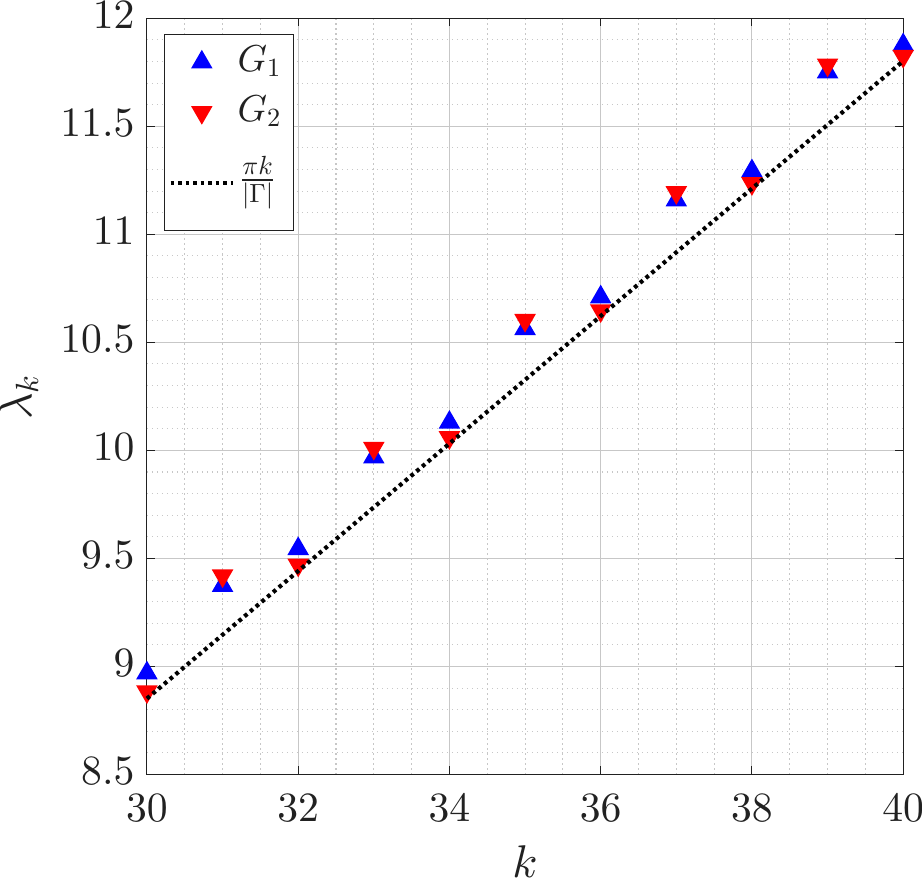}}
	\hfill\scalebox{0.25}{\includegraphics{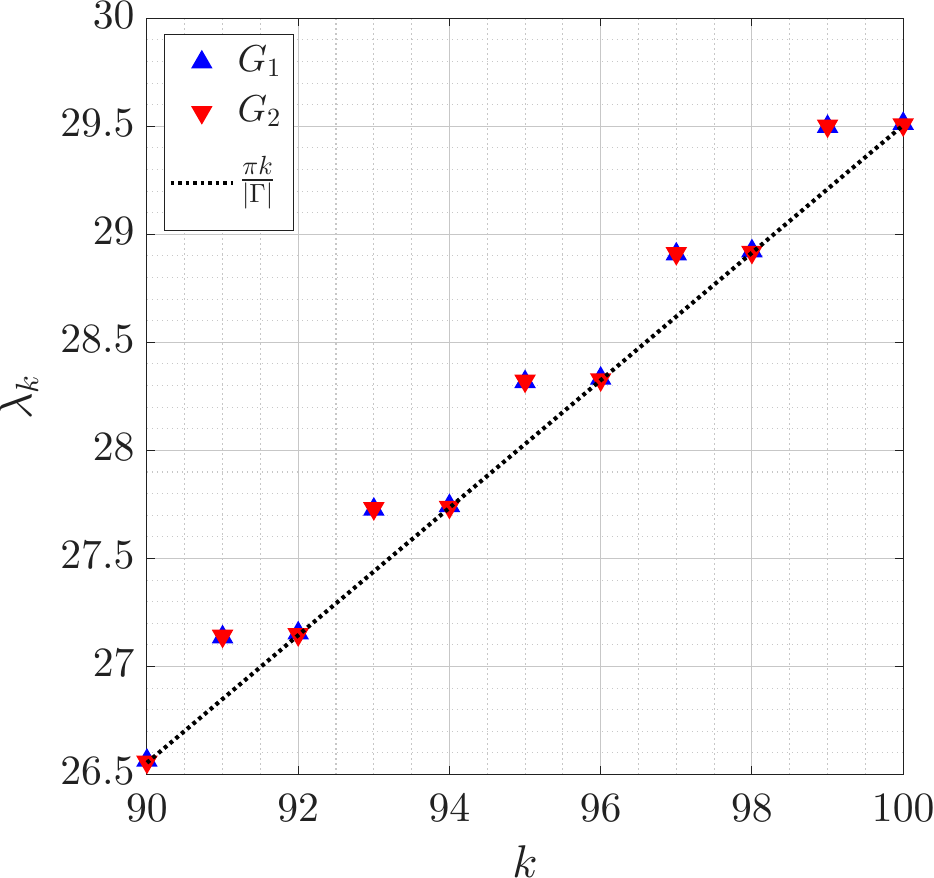}}}
\caption{The eigenvalues for $1\le k \le 10$ (left), $30 \le k \le 40$ (center), $90 \le k \le 100$ (right) for both domains $G_1$ and $G_2$ in Example~\ref{ex:2}.}
	\label{fig:kite_eig}
\end{figure}

\begin{figure}[htb] %
	\centerline{
		\scalebox{0.25}{\includegraphics[trim=0.0cm 0.0cm 0.0cm 0.0cm,clip]{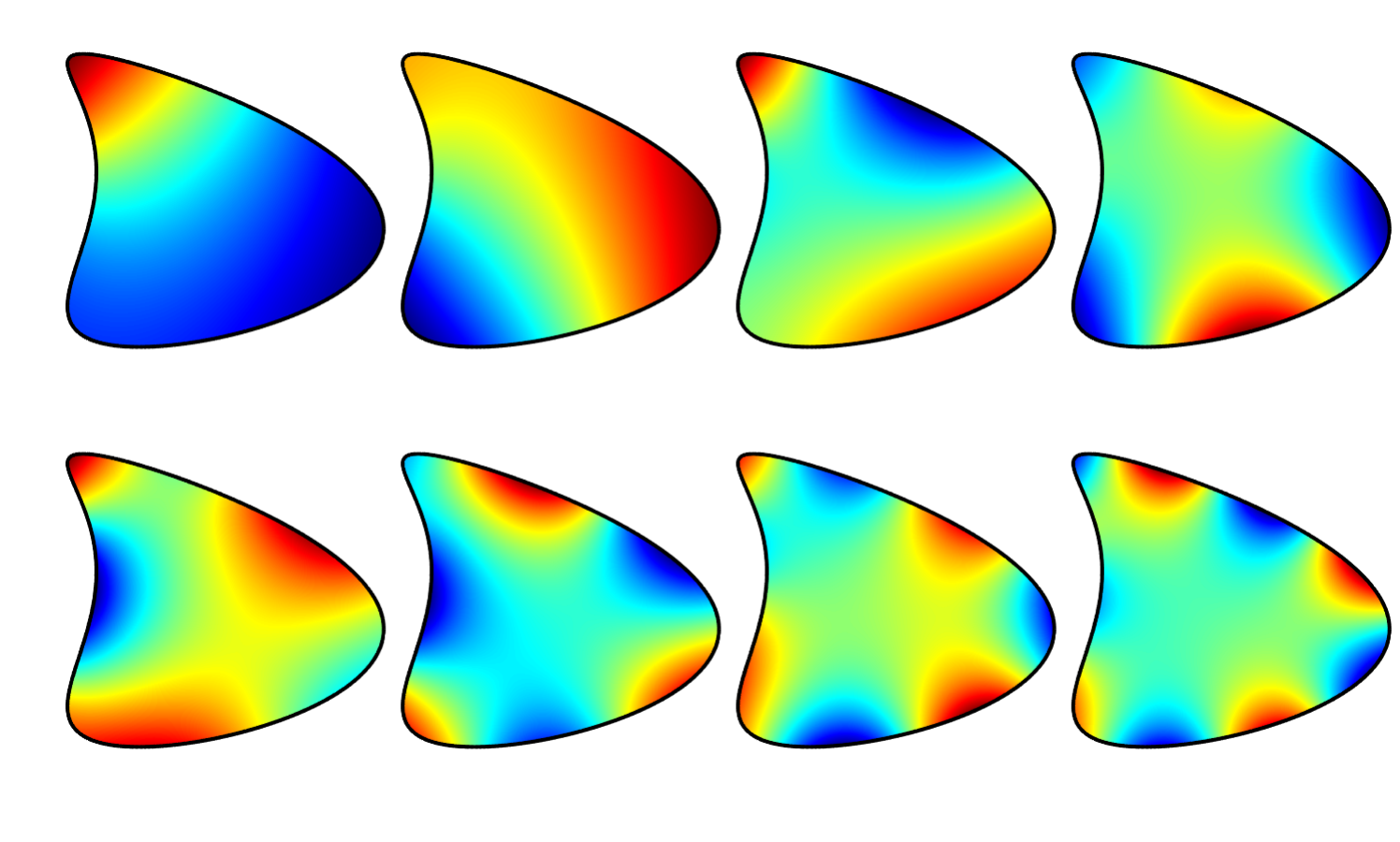}}
		\hfill
		\scalebox{0.25}{\includegraphics[trim=0.0cm 0.0cm 0.0cm 0.0cm,clip]{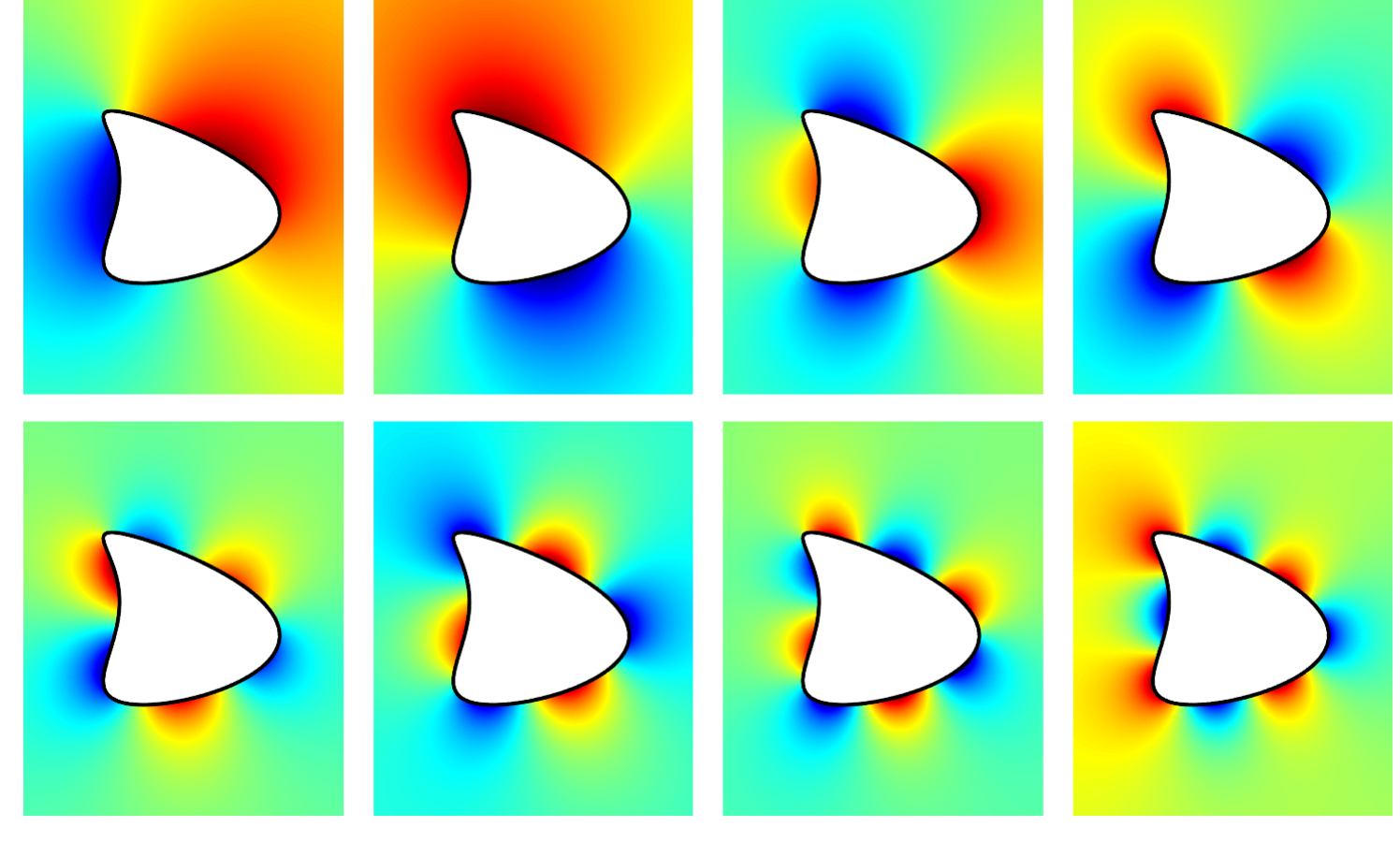}}
		}
\caption{Eigenmodes for the first $8$ nonzero eigenvalues for the bounded domain $G_1$ (left) and the unbounded domain $G_2$ (right) in Example~\ref{ex:2}. }
	\label{fig:kite_eigf}
\end{figure}

\begin{figure}[htb] %
	\centerline{
		\scalebox{0.28}{\includegraphics[trim=0.25cm 0.25cm 0.25cm 0.25cm,clip]{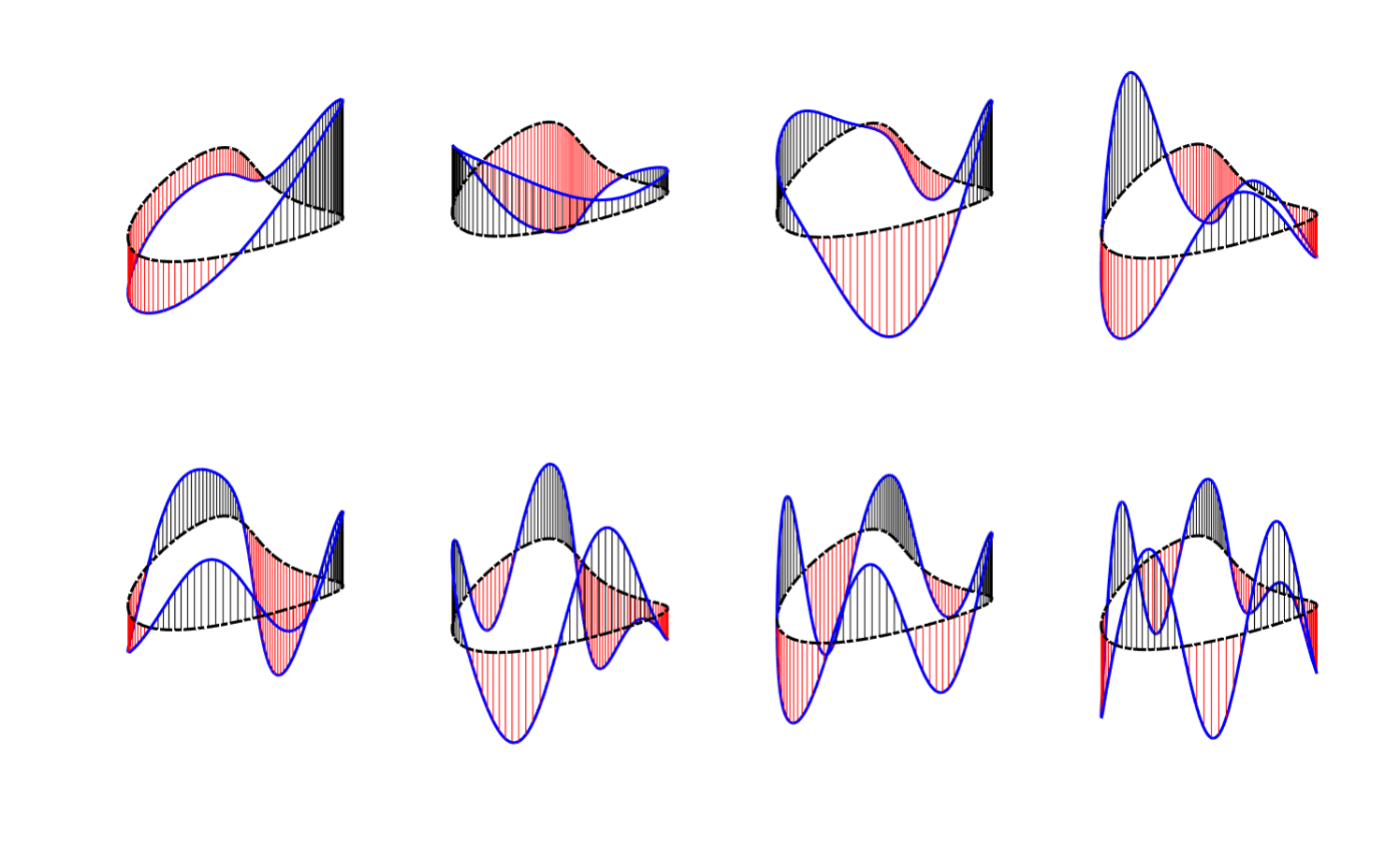}}
		\hfill
		\scalebox{0.28}{\includegraphics[trim=0.25cm 0.25cm 0.25cm 0.25cm,clip]{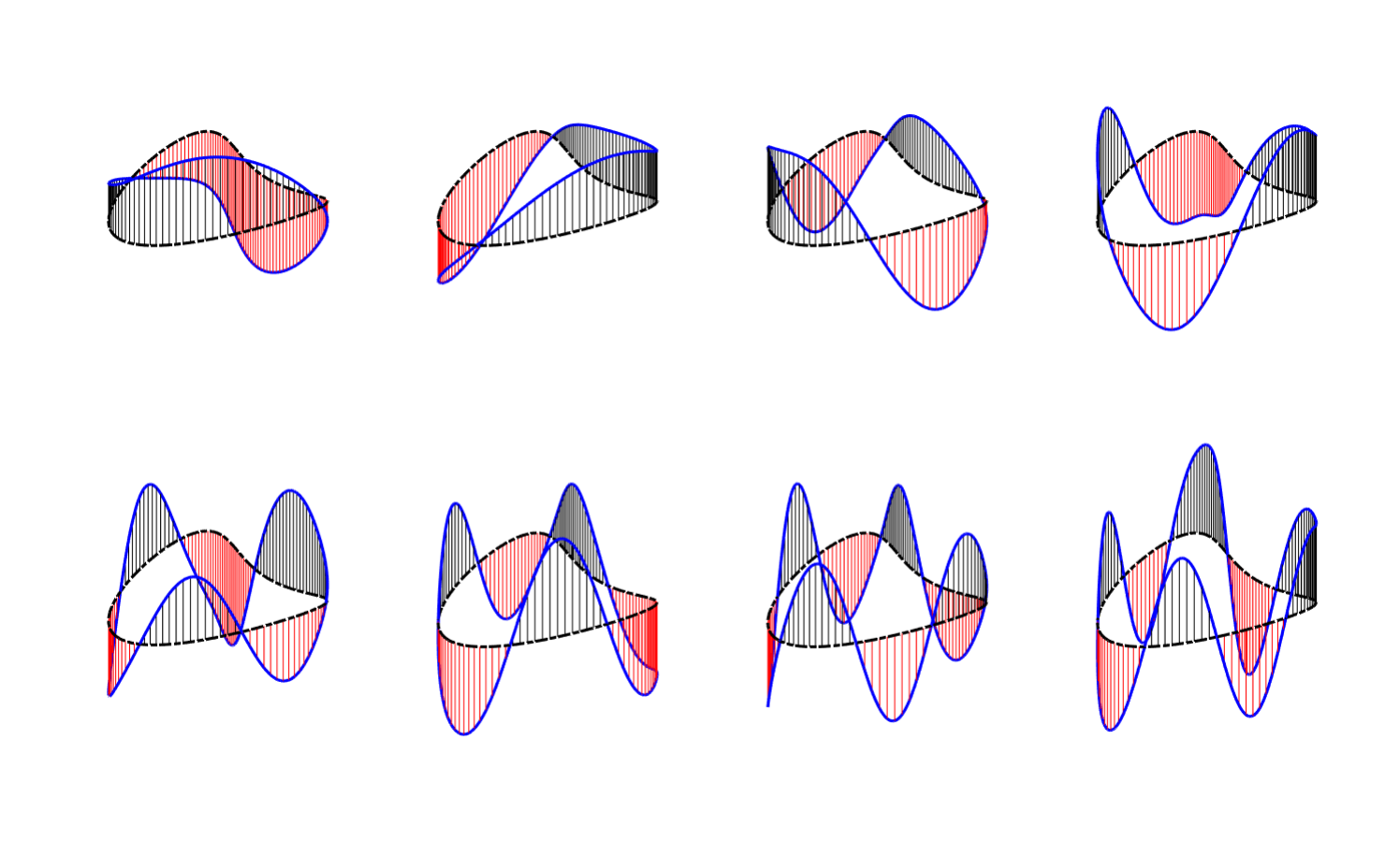}}
		}
\caption{Boundary traces of the eigenmodes corresponding to the first $8$ nonzero eigenvalues for the bounded domain $G_1$ (left) and the unbounded domain $G_2$ (right) in Example~\ref{ex:2}. }
	\label{fig:kite_eigf_b}
\end{figure}

\subsection{Parameter-dependent domains}

In this subsection, we consider two parameter-dependent families of smooth
domains. First, we investigate the eigenvalues and eigenfunctions for the
simply connected domains in the interior and the exterior of an ellipse. We
study the effect of the semiaxes ratio of the ellipse on the eigenvalues for
both the interior and the exterior problems. Next, we consider a symmetric
star-like family whose parametrization involves a parameter controlling the
geometry. We study the effect of this parameter on the eigenvalues for the
domains interior and exterior to this curve.

\begin{example}\label{ex:ell}
Let $\Gamma$ be the ellipse with the length of the minor axis $2a$ and the length of the major axis $2ar$ for $r\ge 1$. 
We consider the bounded simply connected domain $G_1$ and the unbounded simply connected domain $G_2$ in the interior and the exterior of $\Gamma$, respectively. 
The curve $\Gamma$ is parametrized for the bounded domain $G_1$ by 
\begin{equation}
\eta(t) = a(\cos t +\i r \sin t), \quad 0\le t\le 2\pi,
\end{equation}
and for the unbounded domain $G_2$ by 
\begin{equation}
\eta(t) = a(\cos t -\i r \sin t), \quad 0\le t\le 2\pi.
\end{equation}
Hence $\Gamma$ is oriented counterclockwise for the domain $G_1$ and clockwise for the domain $G_2$.
\end{example}

In this example, we fix the length of $\Gamma$ to be $2\pi$ and study the
effect of varying $r$ on the first $10$ nonzero eigenvalues and their corresponding
eigenmodes.
The length of the curve $\Gamma$ can be calculated by $|\Gamma| = \int_0^{2\pi} |\eta'(t)|dt = aI$ where $I=\int_0^{2\pi}\sqrt{\cos^2 t + r^2 \sin^2 t}dt$. Thus, for a given value of $r$, the value of $a$ is given by $a = 2\pi/I$ where $I$ can be approximated by $I \approx (2\pi/n) \sum_{j=1}^{n} \sqrt{\cos^2 t_j + r^2 \sin^2 t_j}$ with the equidistant grid points $t_j$ given by~\eqref{eq:tj}.
For large values of $r$, the ellipse $\Gamma$ will be thin, so that large values of $n$ need to be used to obtain accurate results using the proposed method. 
We compute the  first $10$ nonzero eigenvalues $0<\lambda_1\le\cdots\le\lambda_{10}$ as functions of $r$ with $1\le r\le 10$ for both domains $G_1$ and $G_2$.
We choose $n=2^{10}$ for $1 \le r \le 5$ and $n= 2^{11}$ for $5 < r \le 10$.  
The obtained results are shown in Figure~\ref{fig:ex_ell_eig10}.

\begin{figure}[ht] %example_ellipse_r10.m, example_ellipse_U_r10.m
	\centering{
	\scalebox{0.35}{\includegraphics{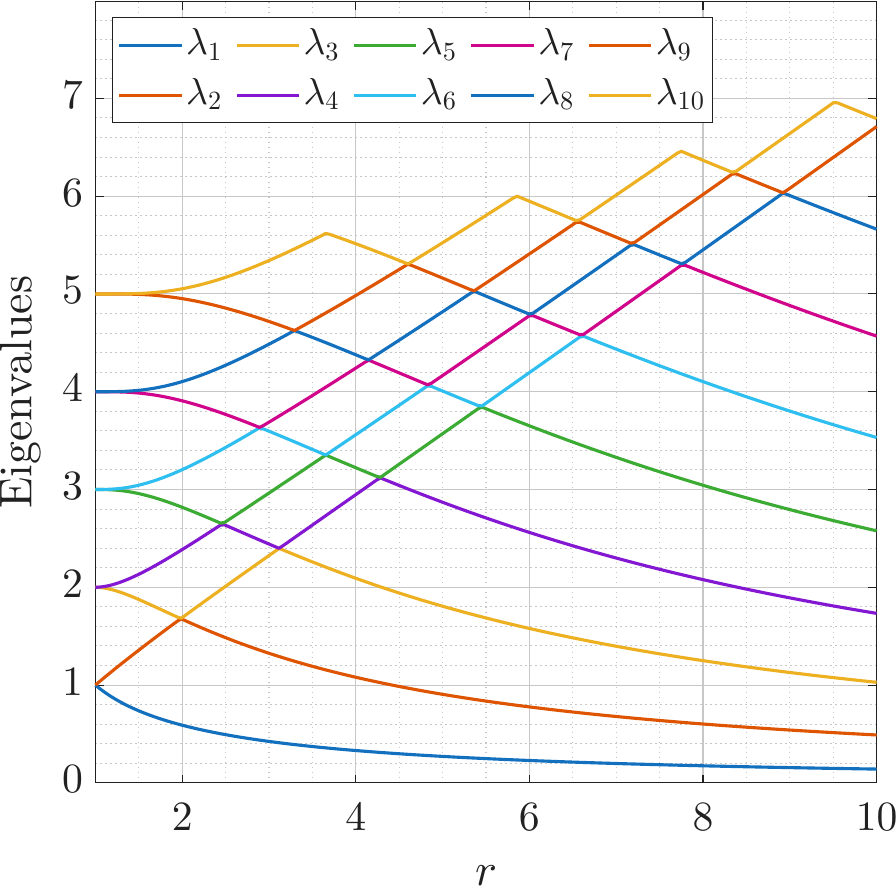}}
	\hfill\scalebox{0.35}{\includegraphics{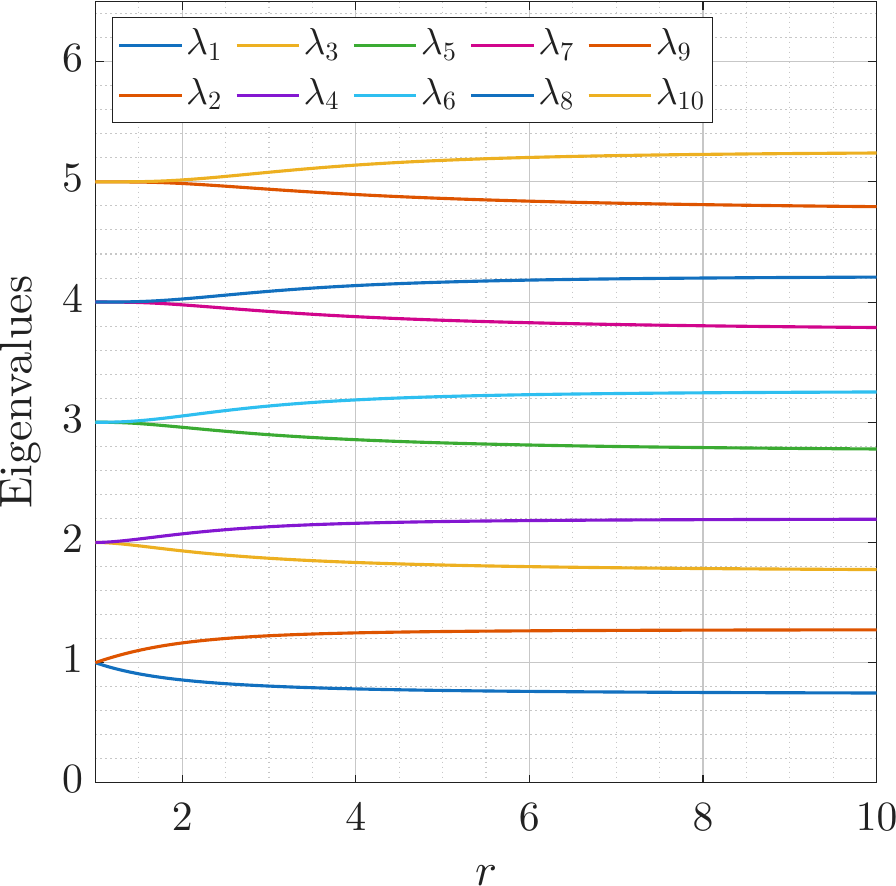}}
   }
\caption{The first 10 nonzero eigenvalues as functions of $r$ for the bounded domain $G_1$ (left) and the unbounded domain $G_2$ (right) in Example~\ref{ex:ell}}
	\label{fig:ex_ell_eig10}
\end{figure}

The relative error for each of the approximate values $\lambda_{k,n}$ of the first $10$ nonzero Steklov eigenvalues for $20\le n\le 400$ and $r=1,2,5,10$ is presented in Figure~\ref{fig:ell_err} where the approximate eigenvalues $\lambda_k$ calculated with $n=2^{11}$ grid points are considered as the exact values.
It is clear that the error increases as $r$ increases and the error is approximately $10^{-14}$ when $n \ge 150$ for both domains for the considered values of $r$. 
Figure~\ref{fig:ell_time} presents the number of iterations and the CPU time (sec) required for the convergence of the MATLAB function {\tt eigs} for the first $10$ nonzero eigenvalues for $r=1,2,5,10$ as well as the condition number of the matrix $Q+I$ for both domains $G_1$ and $G_2$. It is clear that the number of iterations in {\tt eigs} is almost independent of $n$ and $r$.

\begin{figure}[ht] 
	\centering
	\subfloat[$r=1$]{
		\scalebox{0.18}{\includegraphics[trim=0cm 0.0cm 0cm 0.0cm,clip]{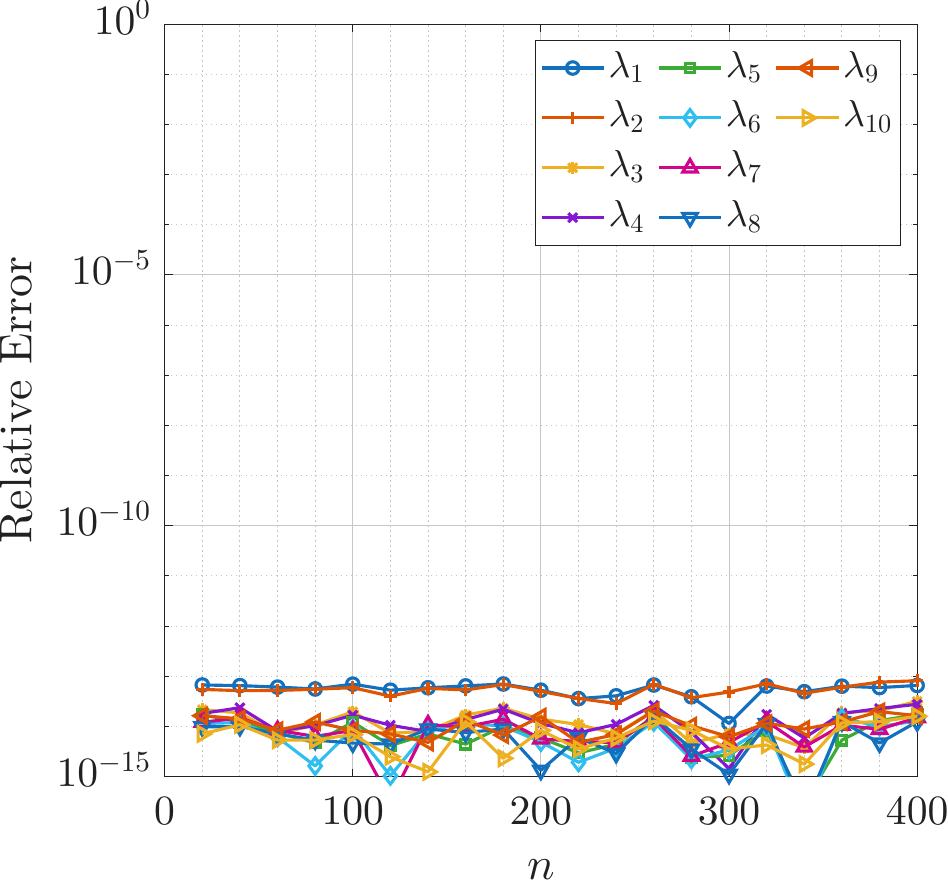}}
	}
	\subfloat[$r=2$]{
		\scalebox{0.18}{\includegraphics[trim=0cm 0.0cm 0cm 0.0cm,clip]{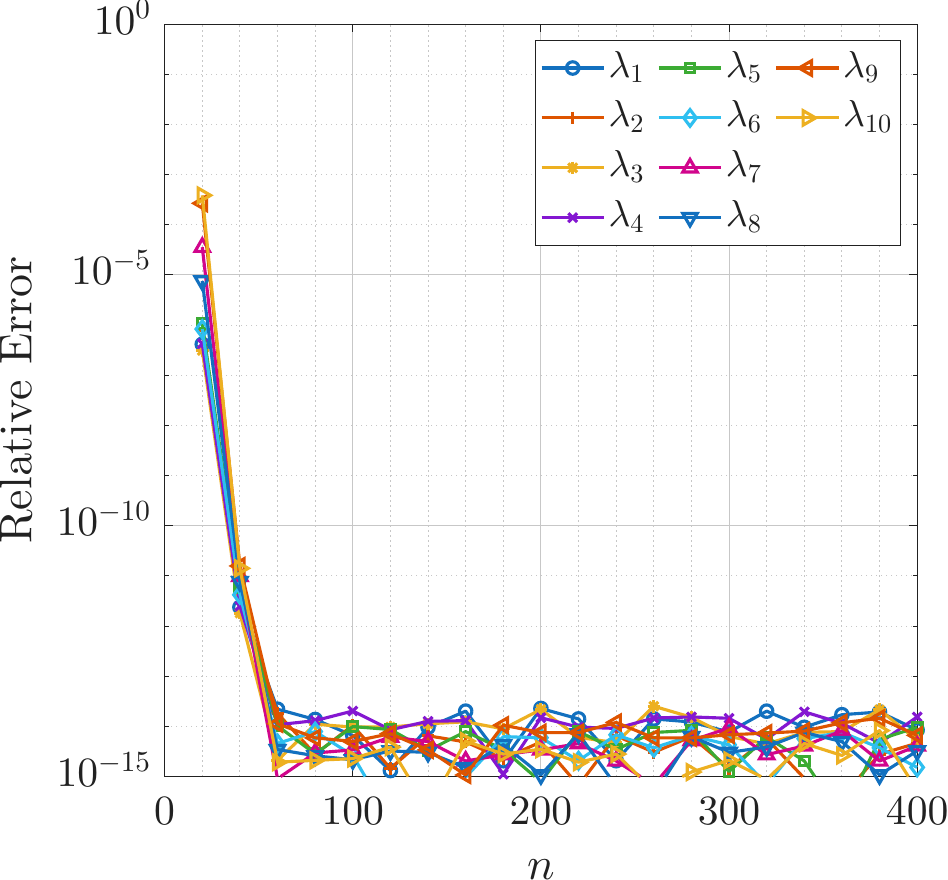}}
	}
	\subfloat[$r=5$]{
		\scalebox{0.18}{\includegraphics[trim=0cm 0.0cm 0cm 0.0cm,clip]{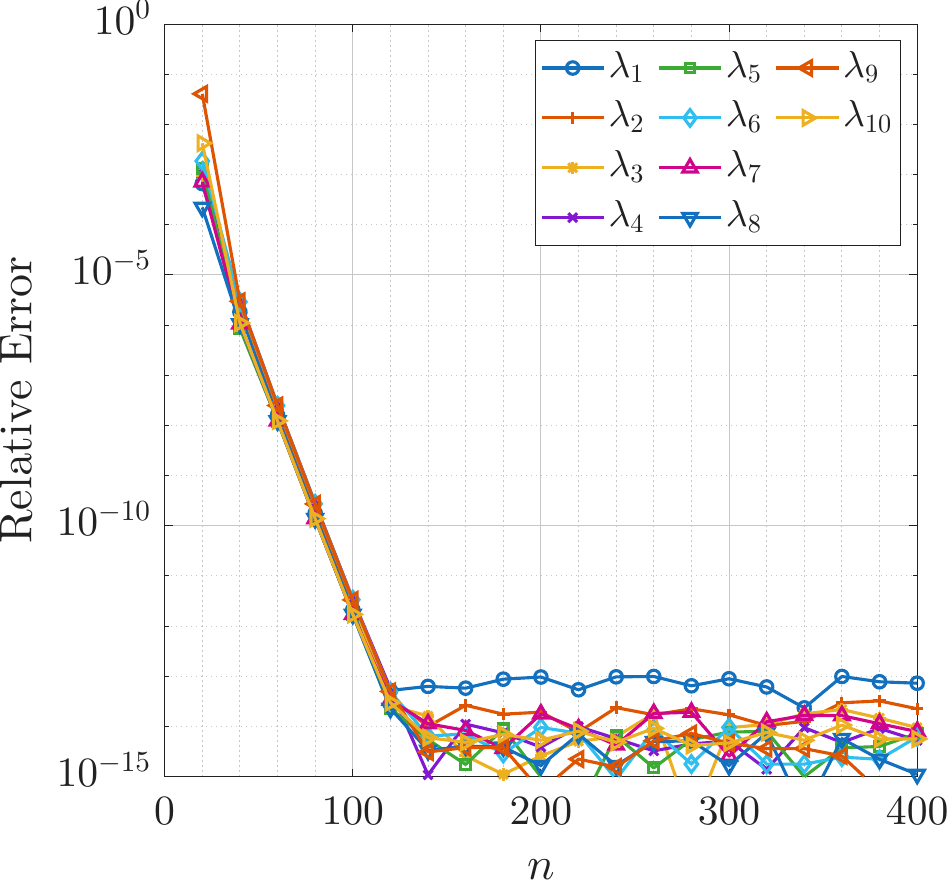}}
\textbf{}	}
	\subfloat[$r=10$]{
		\scalebox{0.18}{\includegraphics[trim=0cm 0.0cm 0cm 0.00cm,clip]{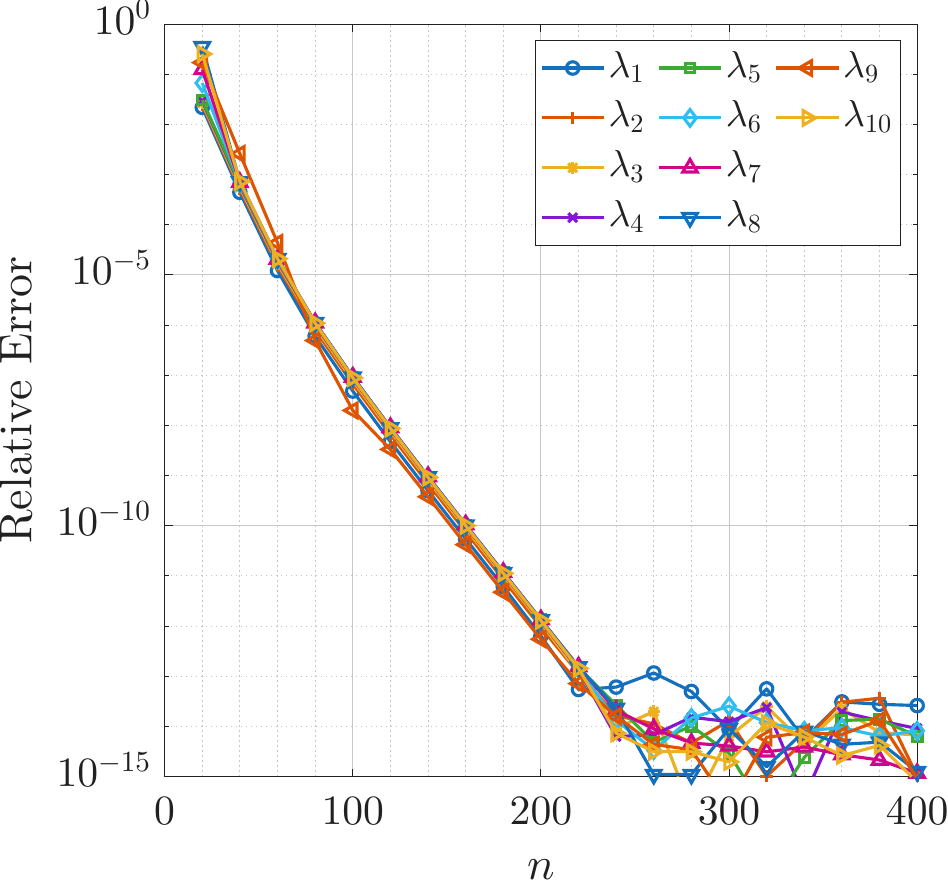}}
	}

	\centering
	\subfloat[$r=1$]{
		\scalebox{0.18}{\includegraphics[trim=0cm 0.0cm 0cm 0.0cm,clip]{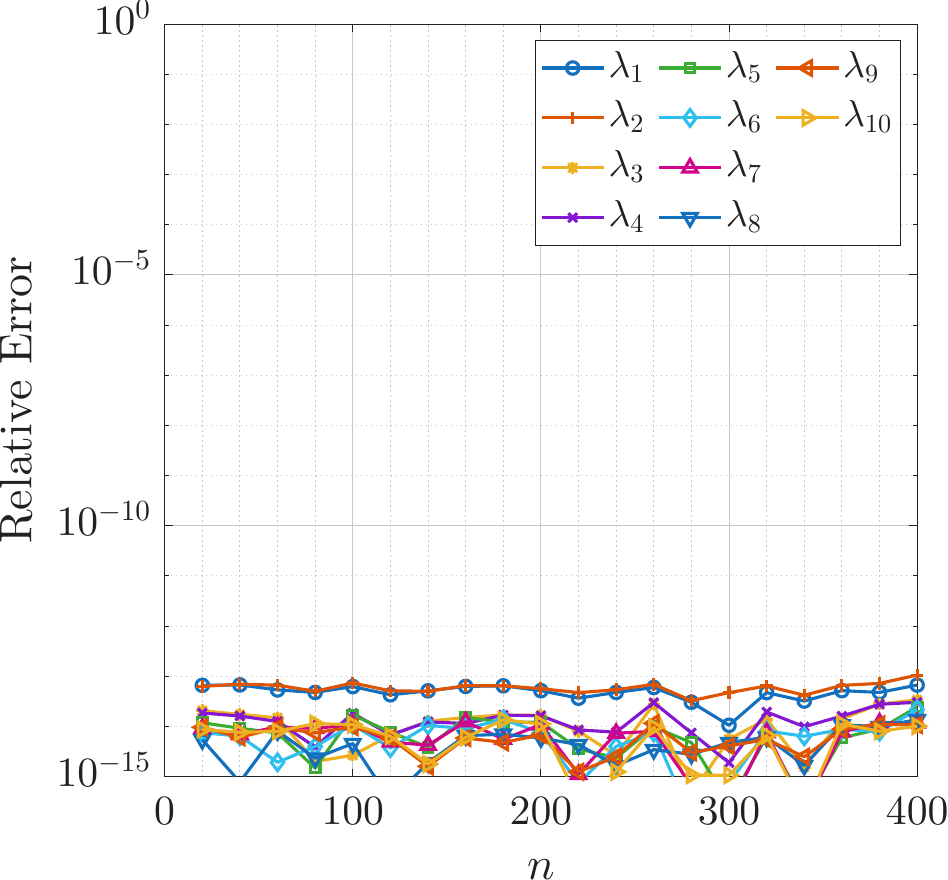}}
	}
	\subfloat[$r=2$]{
		\scalebox{0.18}{\includegraphics[trim=0cm 0.0cm 0cm 0.0cm,clip]{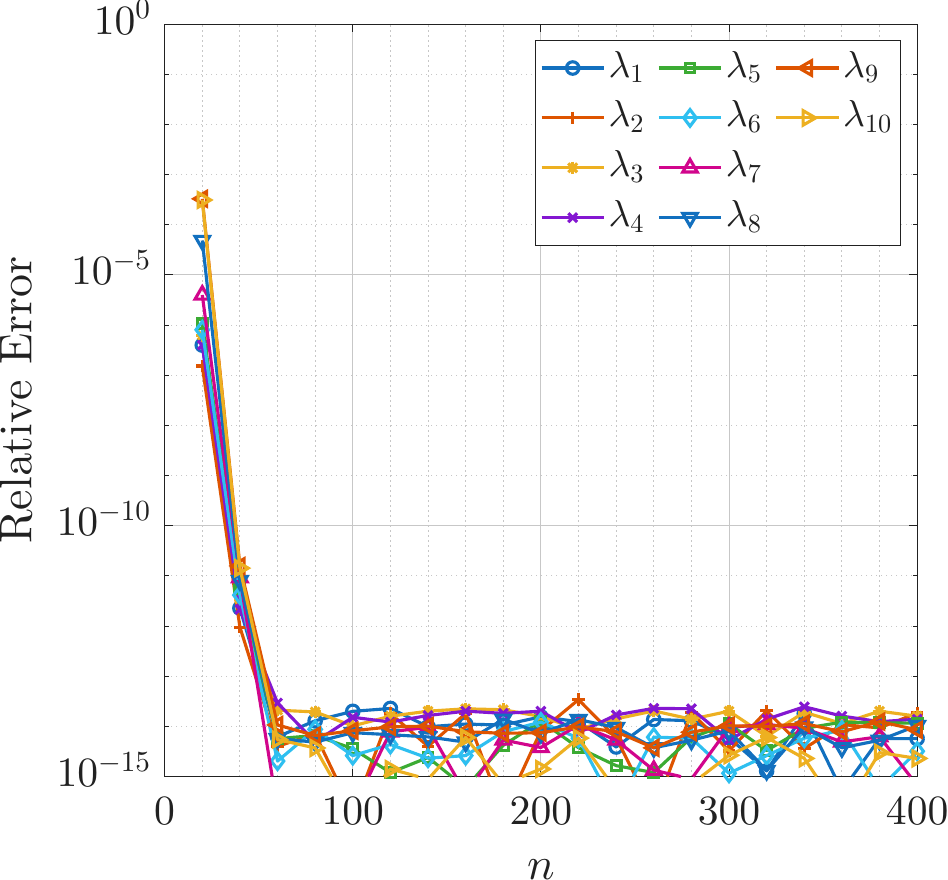}}
	}
	\subfloat[$r=5$]{
		\scalebox{0.18}{\includegraphics[trim=0cm 0.0cm 0cm 0.0cm,clip]{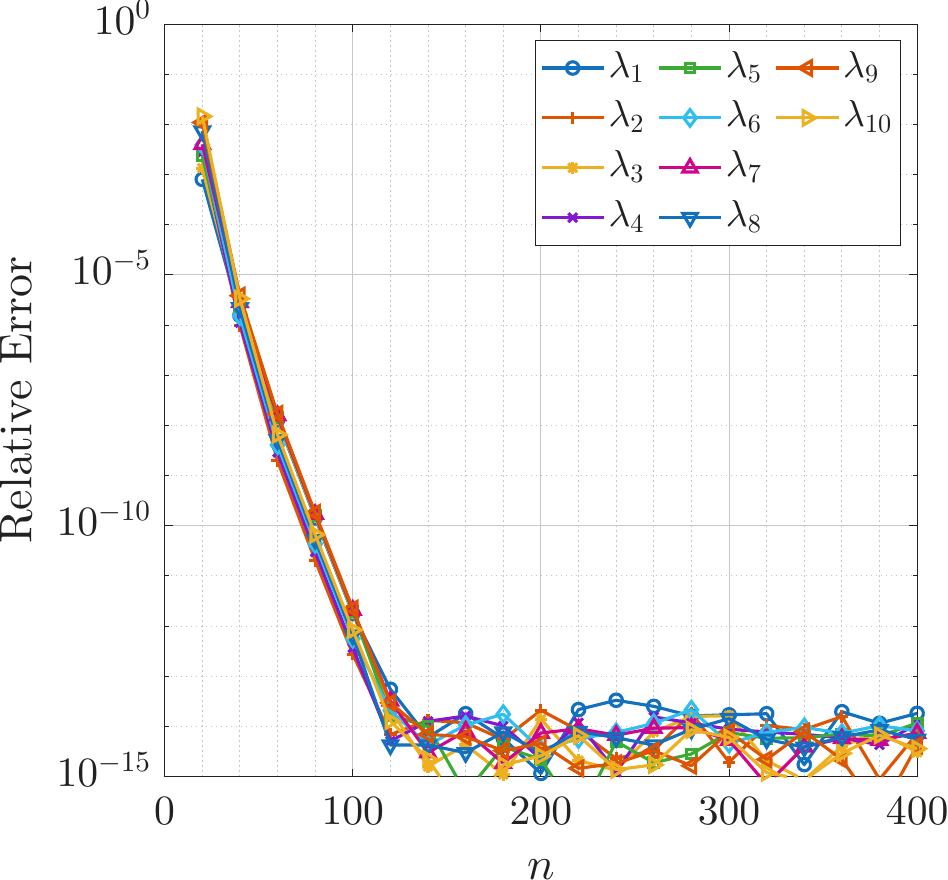}}
\textbf{}	}
	\subfloat[$r=10$]{
		\scalebox{0.18}{\includegraphics[trim=0cm 0.0cm 0cm 0.00cm,clip]{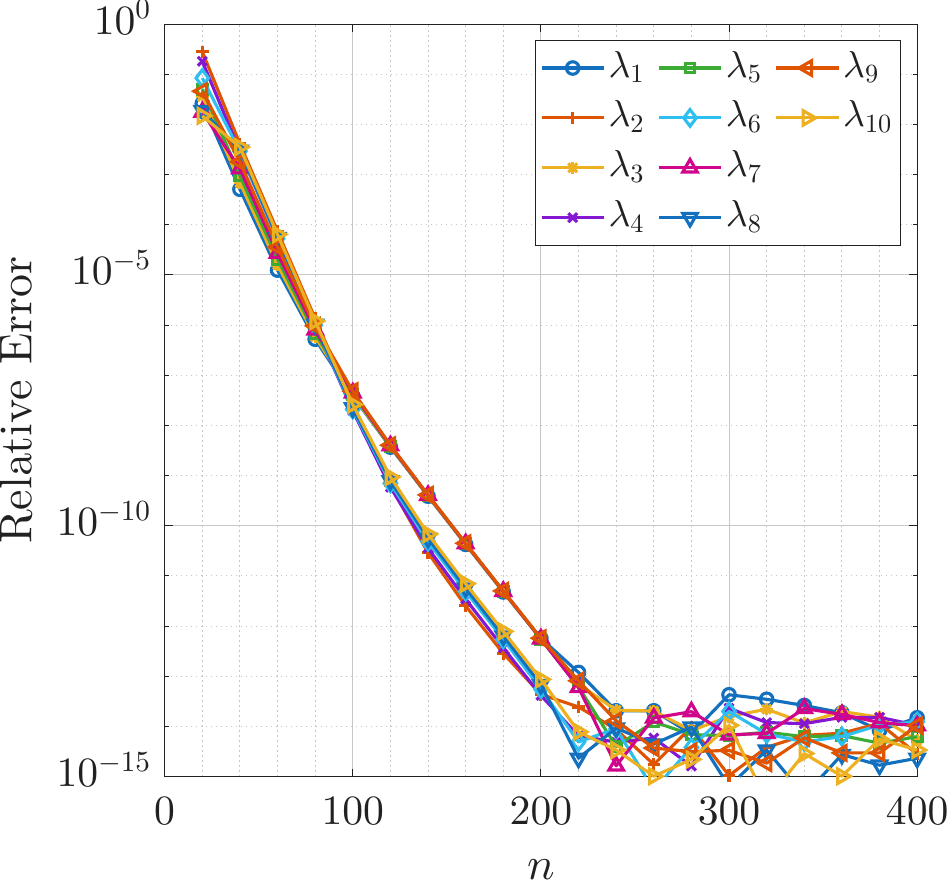}}
	}
\caption{The relative error for the first $10$ nonzero eigenvalues for the bounded domain $G_1$ (first row) and the unbounded domain $G_2$ (second row) in Example~\ref{ex:ell}.}
	\label{fig:ell_err}
\end{figure}

\begin{figure}[ht] 
	\centering{
    \scalebox{0.25}{\includegraphics[trim=0cm 0cm 0.0cm 0cm,clip]{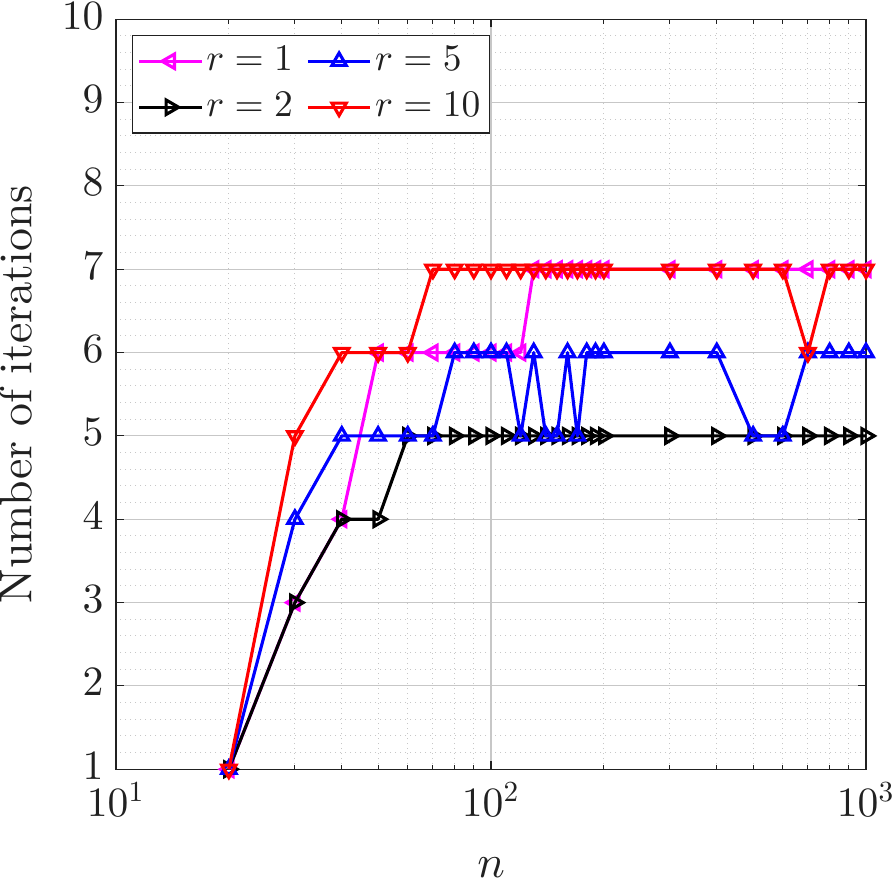}}
		\hfill\scalebox{0.25}{\includegraphics[trim=0cm 0cm 0.0cm 0cm,clip]{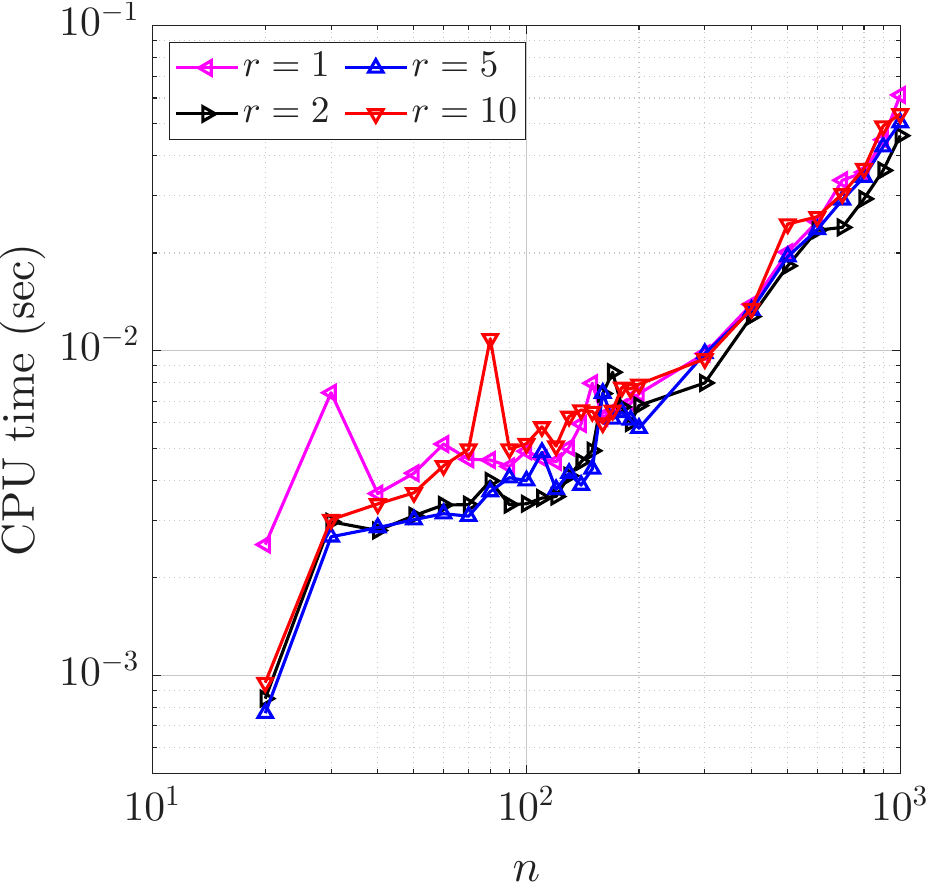}}
		\hfill\scalebox{0.25}{\includegraphics[trim=0cm 0cm 0.0cm 0cm,clip]{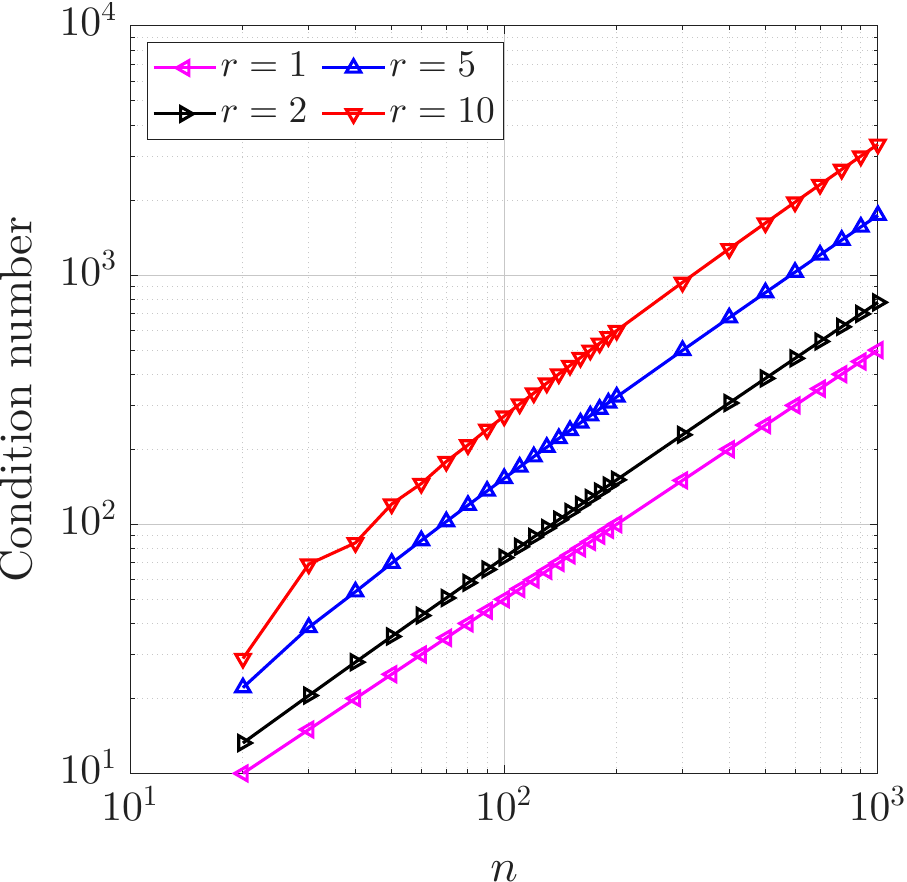}}
    }
		
	\centering{
    \scalebox{0.25}{\includegraphics[trim=0cm 0cm 0.0cm 0cm,clip]{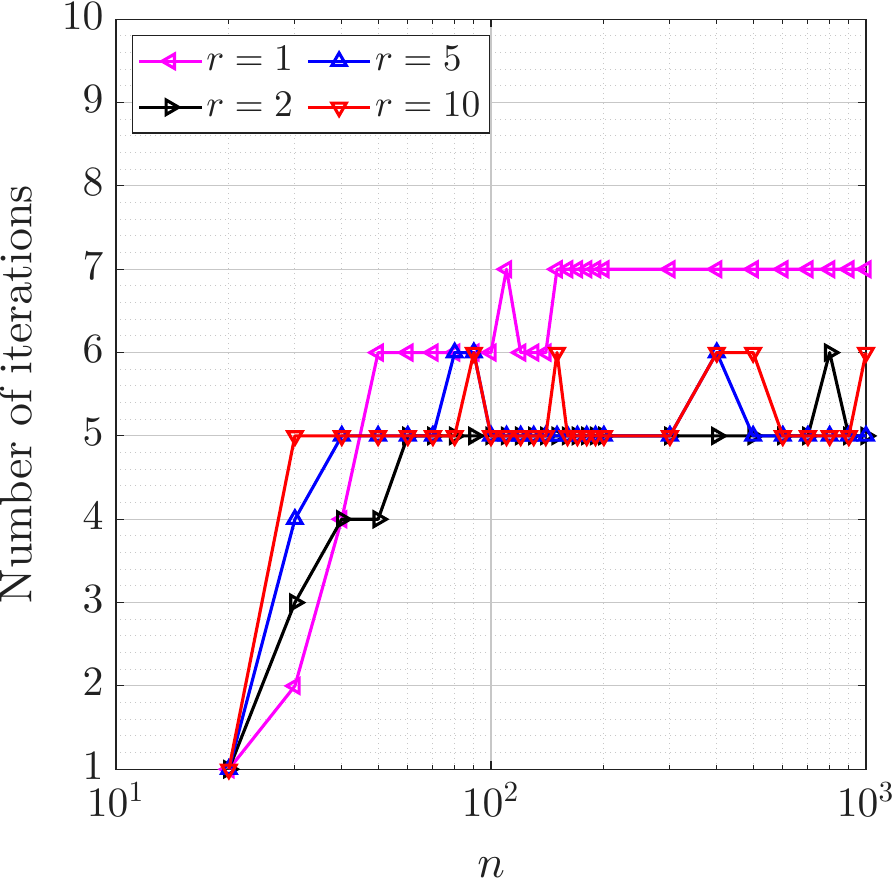}}
		\hfill\scalebox{0.25}{\includegraphics[trim=0cm 0cm 0.0cm 0cm,clip]{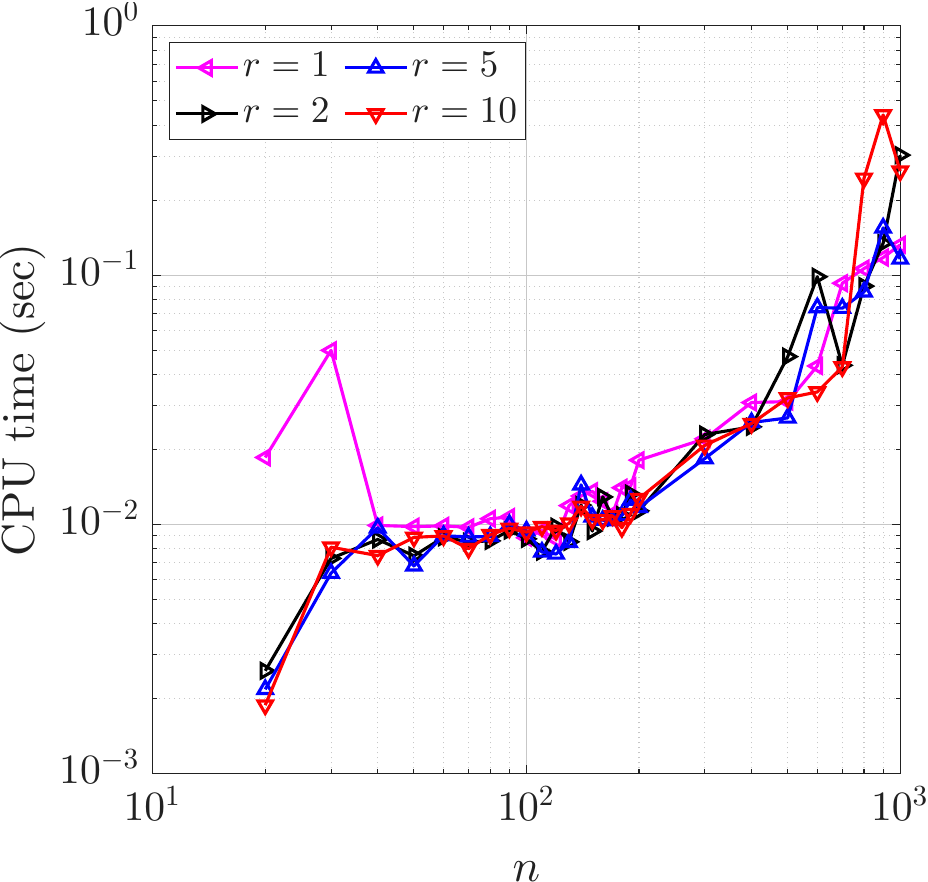}}
		\hfill\scalebox{0.25}{\includegraphics[trim=0cm 0cm 0.0cm 0cm,clip]{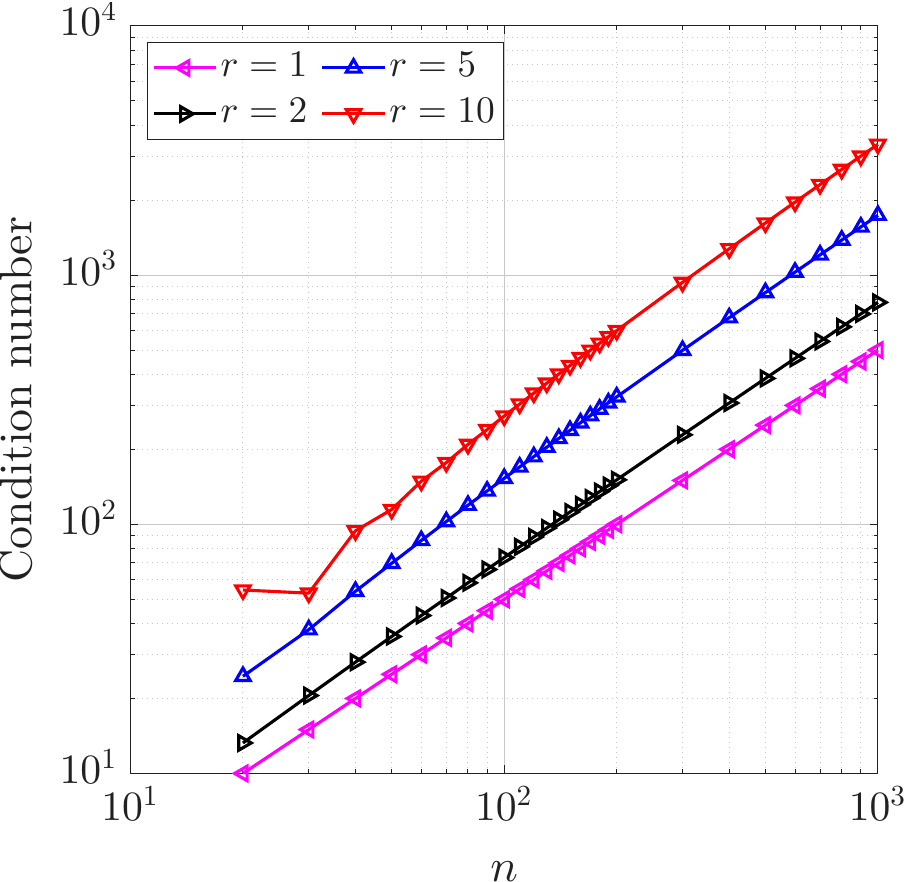}}
    }
\caption{The number of iterations and CPU time (sec) required for the convergence of the MATLAB function {\tt eigs} for the first $10$ nonzero eigenvalues and the condition number of the matrix $Q+I$ for the bounded domain $G_1$ (first row) and the unbounded domain $G_2$ (second row) in Example~\ref{ex:ell}.}
	\label{fig:ell_time}
\end{figure}

For the bounded domain $G_1$, Figure~\ref{fig:ex_ell_eig10} (left) shows that
none of the eigenvalues $\lambda_k(r)$ is monotone for $2\le k\le 10$. The same
plot displays a repeated pattern of near-equalities between consecutive ordered
eigenvalues. At these near-equality points, the sorted branches change local
monotonicity. It is known that $\lambda_2=\lambda_1=1$ for $r=1$ ($\Gamma$ is
the unit circle). The first $10$ nonzero eigenvalues for the bounded domain
$G_1$ are presented in Table~\ref{tab:ell} for several values of $r>1$ at which
two consecutive eigenvalues are approximately equal. These values of $r$ are
obtained using the MATLAB function {\tt fminsearch}. That is, for a given
integer $k$, $1\le k\le 9$, we use {\tt fminsearch} to find the smallest value
of $r$ such that $\lambda_k(r)\approx\lambda_{k+1}(r)$. For example,
Table~\ref{tab:ell} shows that $\lambda_2\approx\lambda_3$ for
$r=1.9838737083599$ and $\lambda_3\approx\lambda_4$ for
$r=3.11781174187898$. Numerically, Table~\ref{tab:ell} suggests that the
smallest value of $r$ for which $\lambda_k(r)\approx\lambda_{k+1}(r)$ is a
candidate extremum of the ordered branch $\lambda_k(r)$ within the ellipse
family considered here.

Because the eigenvalues are sorted independently for each value of $r$, the
label $\lambda_k(r)$ does not necessarily follow a single mode family through a
near-crossing. Accordingly, Figure~\ref{fig:ex_ell_B} should be interpreted as
displaying a reordering of eigenpairs in the sorted spectrum rather than the
creation or disappearance of modes. The computed eigenmodes for
$r=1,2,2.6,3$ suggest that when two consecutive eigenvalues become nearly equal,
the associated mode families exchange their order in the sorted list.
Near a degeneracy, the natural object to track is the two-dimensional
invariant subspace spanned by the two eigenpairs, rather than the
individual eigenvalue branches. This tracking could be based on the
boundary traces, 
for example (see Figure~\ref{fig:kite_eigf_b}).

\begin{table}[ht]
\caption{The first $10$ nonzero eigenvalues $\lambda_k$ for the bounded domain $G_1$ in Example~\ref{ex:ell} obtained with $n=2^{10}$ for the values of $r$ where two eigenvalues are approximately equal}\label{tab:ell}
\centering
\begin{tabular}{lllll}
\hline\noalign{\smallskip}
  & $r=1.983873708359900$ & $r=3.117811741879000$ & $r=4.278336858589900$ & $r=5.442032493985020$ \\
\noalign{\smallskip}\hline\noalign{\smallskip}
$\lambda_1$    & 0.593266465889889     & 0.409978130744910     & 0.311183312003288     & 0.250165499055797 \\
$\lambda_2$    &{\bf1.679239176823338} & 1.291478177842050     & 1.025328241153468     & 0.843631750753444 \\
$\lambda_3$    &{\bf1.679239176835823} &{\bf2.397950880456617} & 2.006048996879334     & 1.699333353247237 \\
$\lambda_4$    & 2.376468343983926     &{\bf2.397950880459712} &{\bf3.120717747646763} & 2.727167519936677 \\
$\lambda_5$    & 2.823647075109909     & 3.033405492077505     &{\bf3.120717747647146} &{\bf3.846560542514552} \\
$\lambda_6$    & 3.196199653195032     & 3.552794534587696     & 3.727869417156279     &{\bf3.846560542533858} \\
$\lambda_7$    & 3.910548250630405     & 3.751799170367676     & 4.274966135900190     & 4.435413954635979 \\
$\lambda_8$    & 4.099500756310629     & 4.541254463038616     & 4.400200548856085     & 4.999147477717563 \\
$\lambda_9$    & 4.954275068888483     & 4.682238868628503     & 5.131245651835014     & 5.077862758644241 \\
$\lambda_{10}$ & 5.050493629797804     & 5.388160175669312     & 5.419827437992173     & 5.770427540965376 \\
\noalign{\smallskip}\hline\noalign{\smallskip}
  & $r=6.604561045328200$ & $r=7.765356504288500$ & $r=8.924566918140600$ & $r=10.082445969980300$ \\
\noalign{\smallskip}\hline\noalign{\smallskip}
$\lambda_1$    & 0.208930000416793     & 0.179263245830117     & 0.156922373331826     & 0.139504578198453 \\
$\lambda_2$    & 0.714184133772993     & 0.618063248878927     & 0.544179146623814     & 0.485759419484110 \\
$\lambda_3$    & 1.464076868706651     & 1.281462674354500     & 1.136998623125178     & 1.020497339361440 \\
$\lambda_4$    & 2.397284856072567     & 2.126678579478409     & 1.904666194092126     & 1.721059452223921 \\
$\lambda_5$    & 3.451787535101981     & 3.107096610614738     & 2.812052169861008     & 2.560651597525257 \\
$\lambda_6$    &{\bf4.574370154312591} & 4.178482178784736     & 3.823566202959877     & 3.510876455379496 \\
$\lambda_7$    &{\bf4.574370154348116} &{\bf5.303394572822056} & 4.906492271273063     & 4.544114910616505 \\
$\lambda_8$    & 5.150736926621857     &{\bf5.303394572840356} &{\bf6.033189909928315} & 5.635380478219814 \\
$\lambda_9$    & 5.725459115522663     & 5.870874261032724     &{\bf6.033189909942186} &{\bf6.763492313333715} \\
$\lambda_{10}$ & 5.772244964721325     & 6.453264041075331     & 6.594086280062486     &{\bf6.763492313344550} \\
\noalign{\smallskip}\hline
\end{tabular}
\end{table}

For the unbounded domain $G_2$, Figure~\ref{fig:ex_ell_eig10}
(right) indicates that all eigenvalues $\lambda_k(r)$ are monotonic functions of $r\ge1$
for $k\ge1$, with $\lambda_k(r)$ decreasing for odd $k$ and increasing for even
$k$. Accordingly, we do not observe the same reordering pattern of eigenmodes as
in the bounded case. Figure~\ref{fig:ex_ell_U} presents the eigenmodes
corresponding to the first $8$ nonzero eigenvalues for the same sample values
$r=1,2,2.6,3$.

For both domains $G_1$ and $G_2$, Figure~\ref{fig:ex_ell_eig10} shows that
$\lambda_1(r)$ is decreasing for $1\le r\le 10$, which is consistent with the
well-known result that the first non-trivial Steklov eigenvalue is maximized on
the disk for simply connected planar domains with fixed perimeter~\cite{Alhe}.
For the bounded domain $G_1$ and for $2\le k\le 10$, Table~\ref{tab:ell}
records the values of $r$ at which consecutive ordered eigenvalues are
approximately equal, and hence provides candidate extremal locations for the
ordered branches within the ellipse family. For the unbounded simply connected
domain exterior to the ellipse with fixed perimeter $2\pi$ and major-to-minor
axis ratio $r$, Figure~\ref{fig:ex_ell_eig10} (right) indicates that
$\lambda_k(r)$ attains its maximum at $r=1$ for odd $k$, while the even
branches increase over the interval $1\le r\le 10$ considered here.

\begin{figure} %
	\centering
	\subfloat[$r=1$]{
		\scalebox{0.27}{\includegraphics[trim=1cm 0.75cm 1cm 0.75cm,clip]{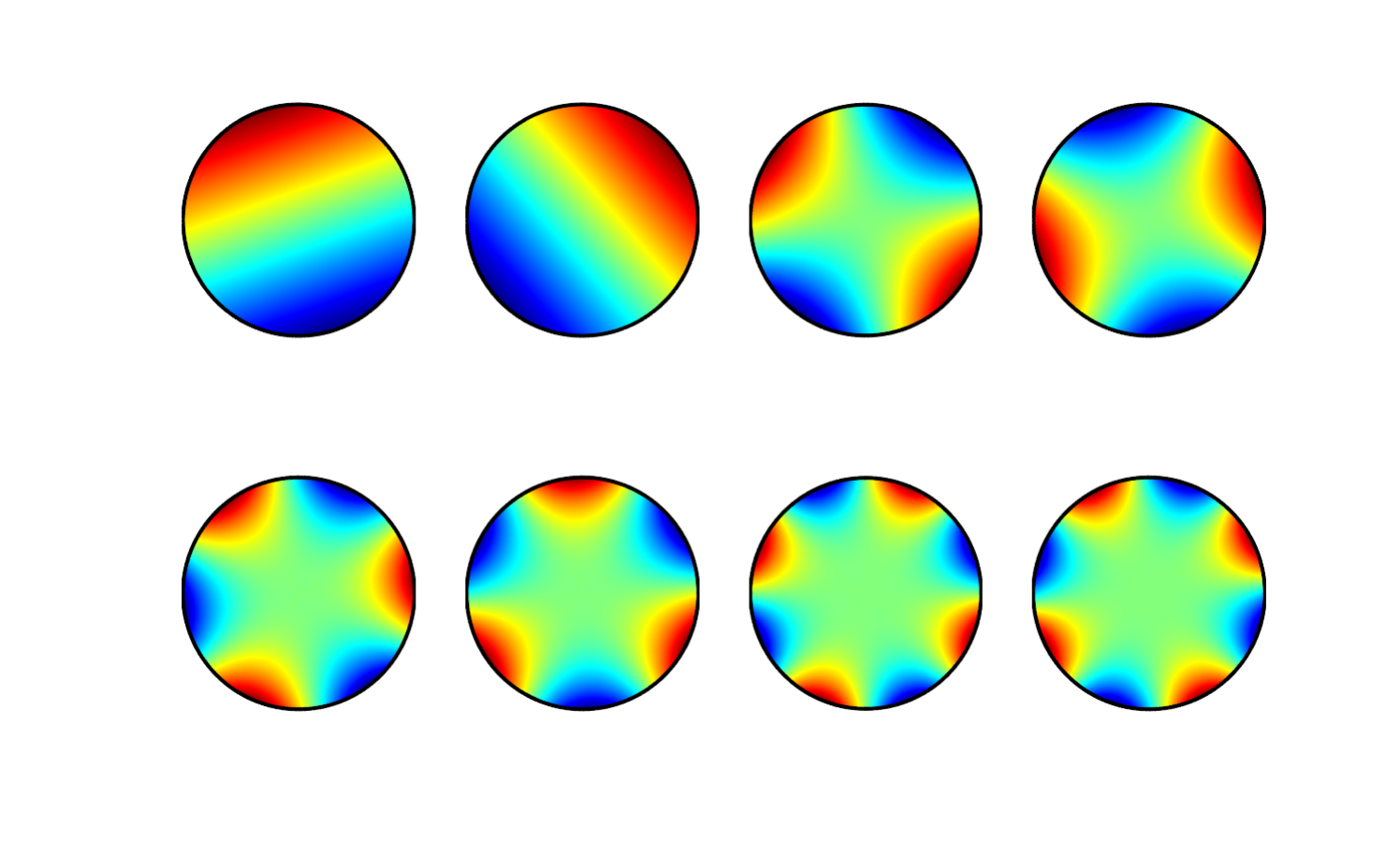}}
	}
	\subfloat[$r=2$]{
		\scalebox{0.27}{\includegraphics[trim=1cm 0.75cm 1cm 0.75cm,clip]{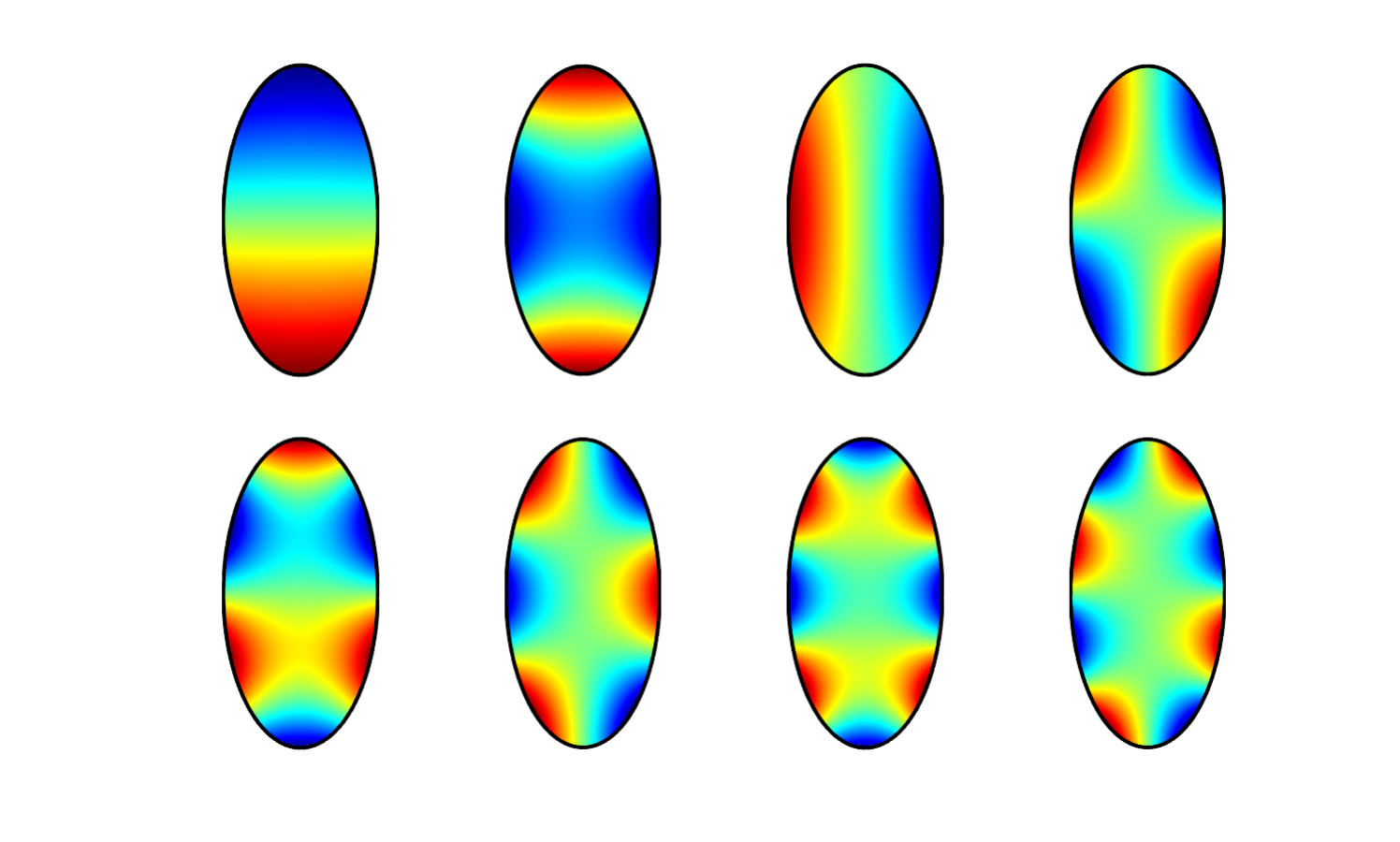}}
	}\\
	\subfloat[$r=2.6$]{
		\scalebox{0.27}{\includegraphics[trim=1cm 0.75cm 1cm 0.75cm,clip]{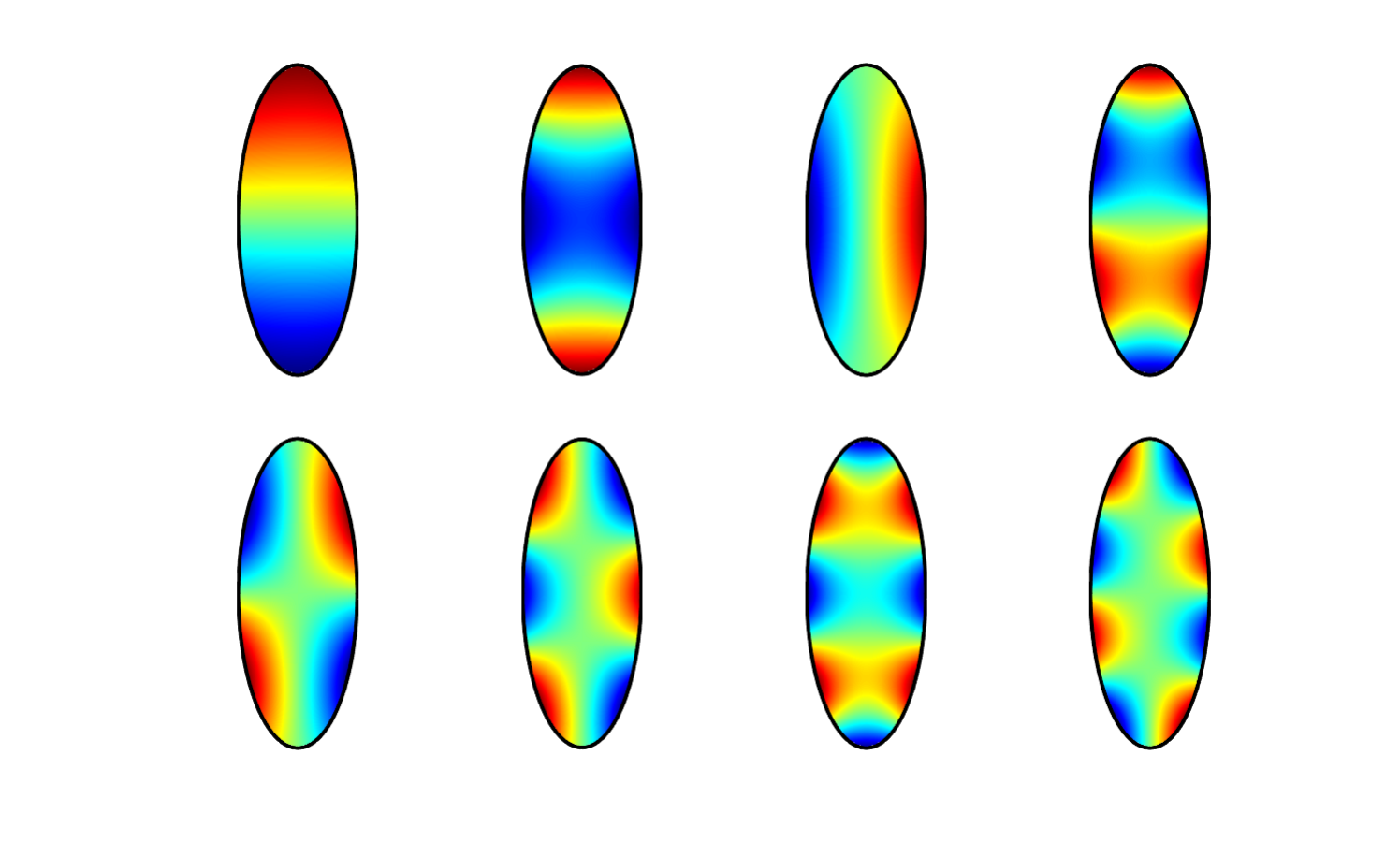}}
\textbf{}	}
	\subfloat[$r=3$]{
		\scalebox{0.27}{\includegraphics[trim=1cm 0.75cm 1cm 0.75cm,clip]{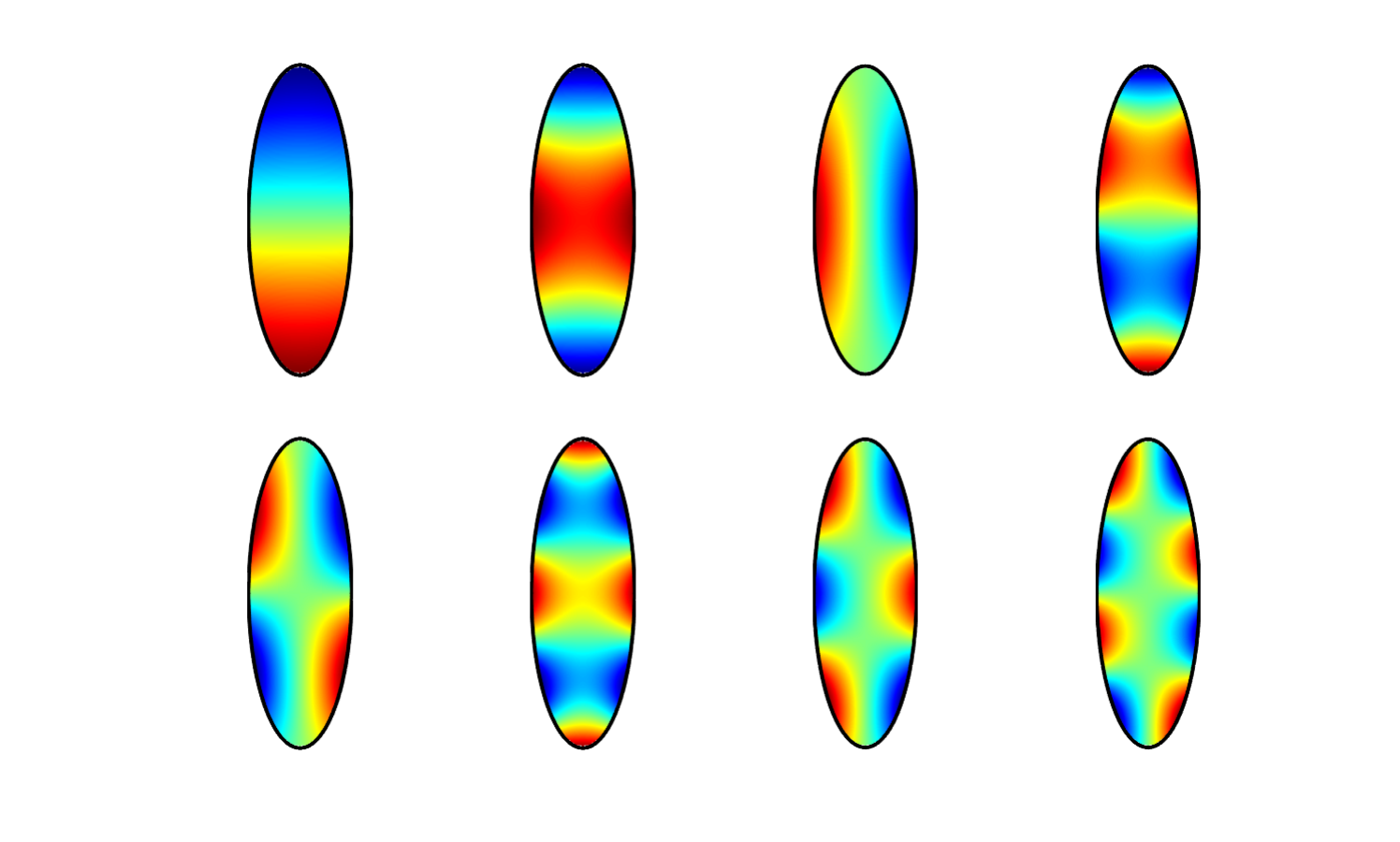}}
	}
\caption{The eigenmodes of the first $8$ nonzero eigenvalues of the bounded domain $G_1$ in Example~\ref{ex:ell}.}
	\label{fig:ex_ell_B}
\end{figure}

\begin{figure} %
	\centering
	\subfloat[$r=1$]{
		\scalebox{0.25}{\includegraphics[trim=0cm 0.0cm 0cm 0.0cm,clip]{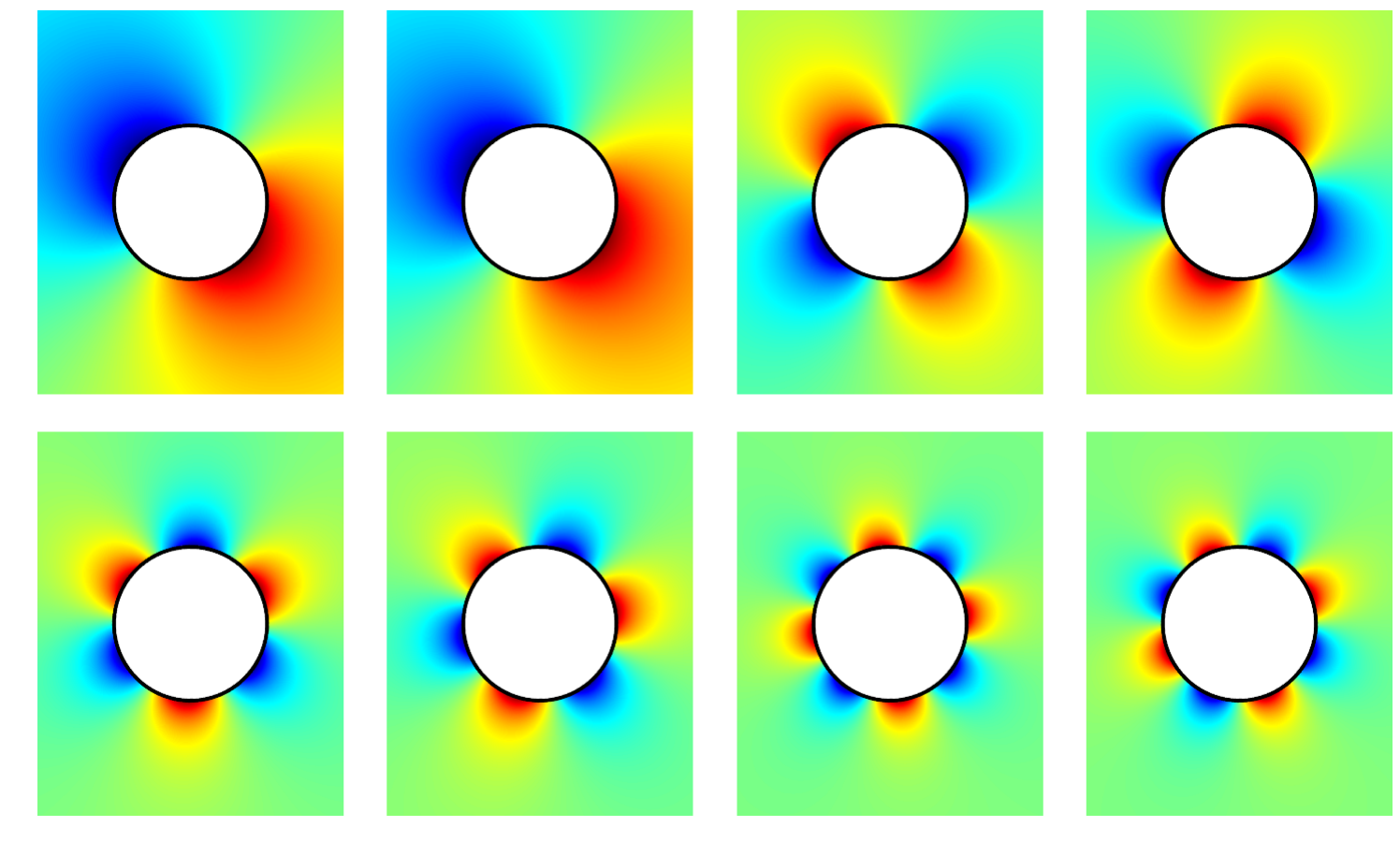}}
	}
	\subfloat[$r=2$]{
		\scalebox{0.25}{\includegraphics[trim=0cm 0.0cm 0cm 0.0cm,clip]{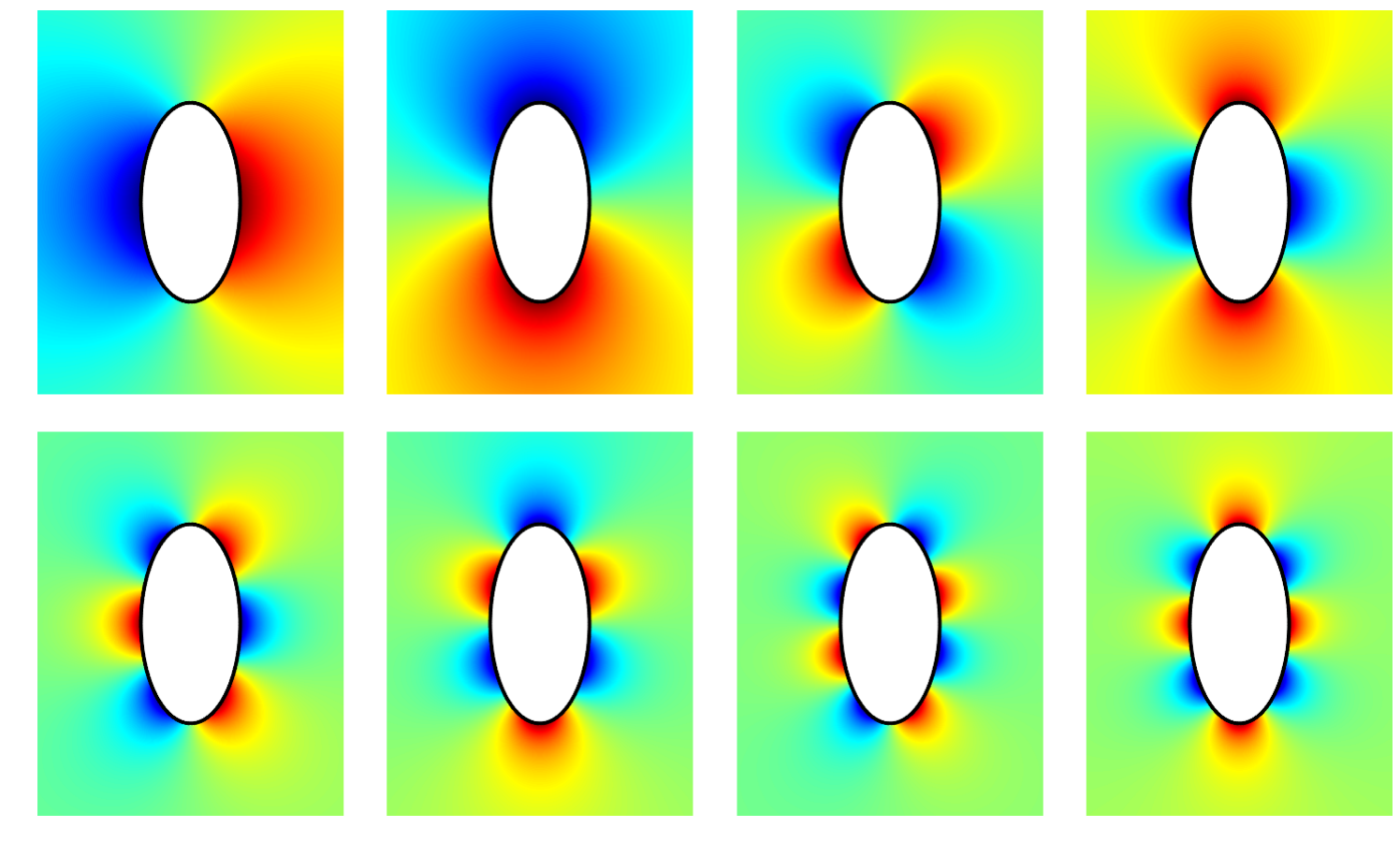}}
	}\\
	\subfloat[$r=2.6$]{
		\scalebox{0.25}{\includegraphics[trim=0cm 0.0cm 0cm 0.0cm,clip]{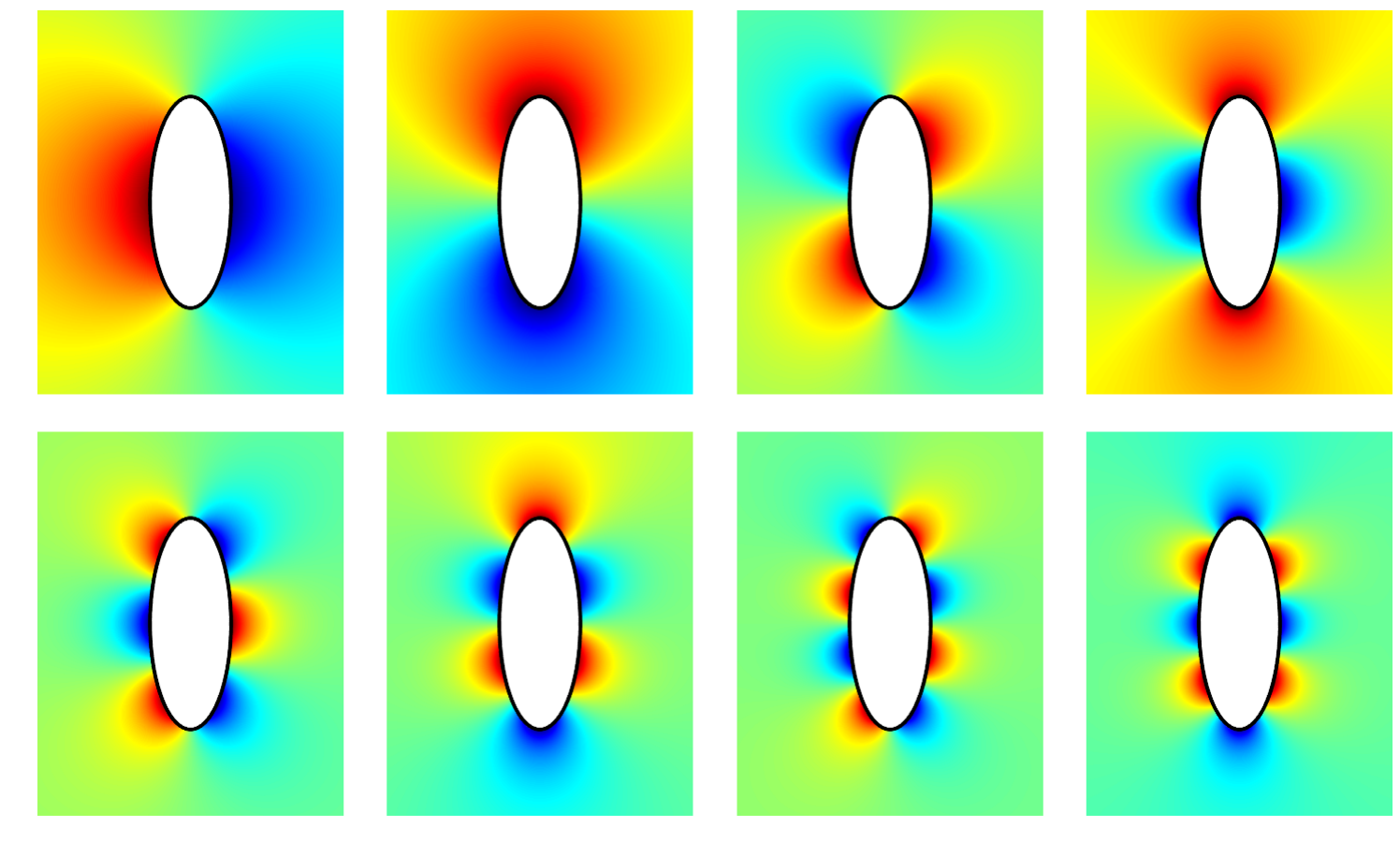}}
\textbf{}	}
	\subfloat[$r=3$]{
		\scalebox{0.25}{\includegraphics[trim=0cm 0.0cm 0cm 0.00cm,clip]{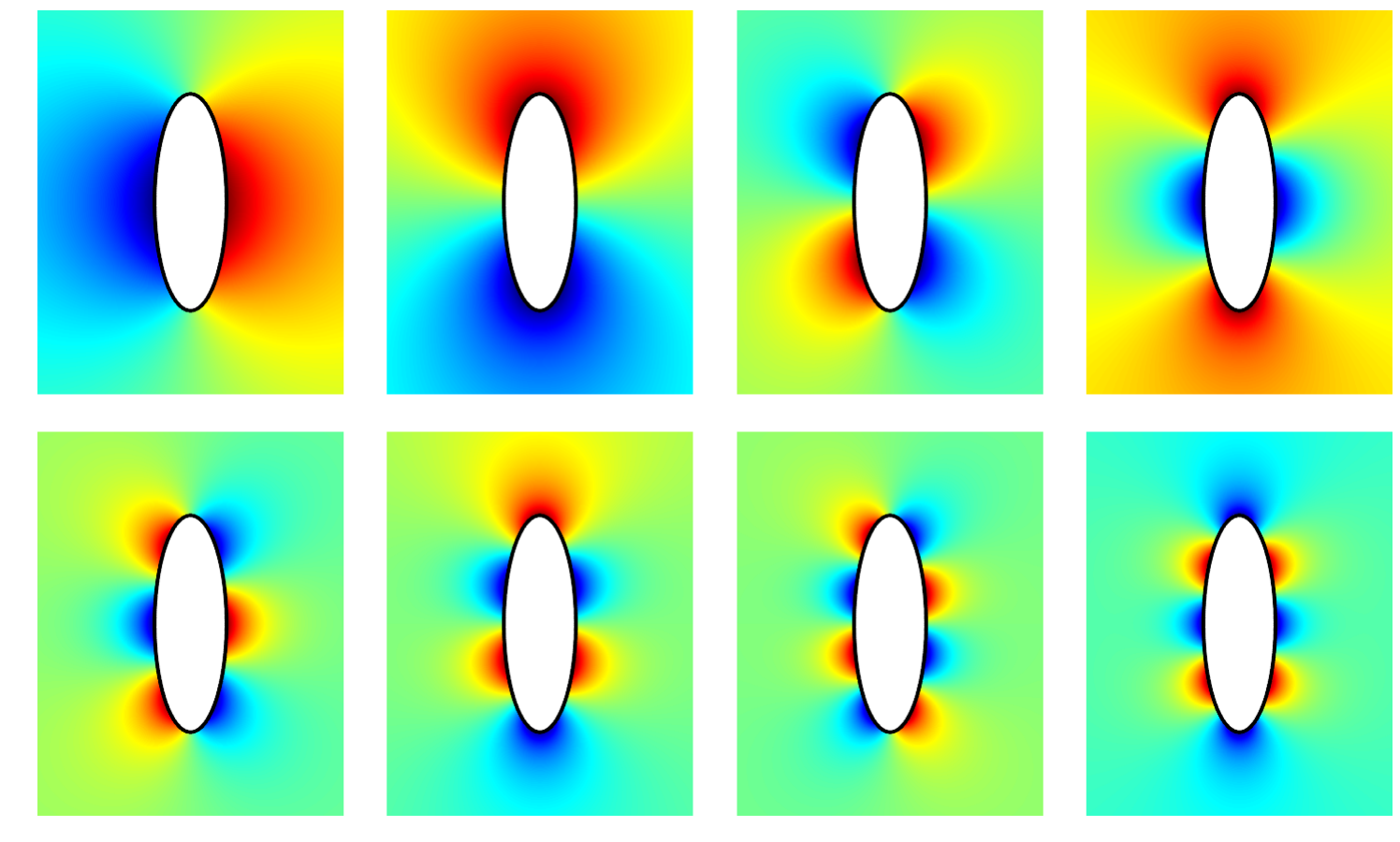}}
	}
\caption{The eigenmodes of the first $8$ nonzero eigenvalues of the unbounded domain $G_2$ in Example~\ref{ex:ell}.}
	\label{fig:ex_ell_U}
\end{figure}

The presented numerical method can be used to validate some of the well-known inequalities related to Steklov eigenvalues. Recall that the boundary $\Gamma$ has a constant length $|\Gamma|=2\pi$ and the area of the bounded domain $G_1$ is $|G_1|=ra^2\pi$. For both the bounded domain $G_1$ and the unbounded domain $G_2$, the asymptotic behavior~\eqref{eq:asym_b} of eigenvalues is validated numerically for the first $100$ nonzero eigenvalues for various values of $r$ and the obtained results are presented in Figure~\ref{fig:ex_ell_eig100}.
For the bounded domain $G_1$, the first two nonzero eigenvalues $\lambda_1(r)$ and $\lambda_2(r)$ satisfy the following inequalities~\cite{Dit}
\begin{equation}\label{eq:ineq-1}
\frac{1}{\lambda_1(r)} + \frac{1}{\lambda_2(r)}  \geq 2, \quad 
\lambda_1(r)\lambda_2(r) \leq 1.
\end{equation}
These two inequalities are validated with the numerically calculated eigenvalues $\lambda_1(r)$ and $\lambda_2(r)$ for $1\le r\le 10$ in Figure~\ref{fig:ex_ell_eig1} (left).
For the unbounded domain $G_2$, the first nonzero eigenvalue $\lambda_1(r)$ satisfies the inequality~\cite{Bun}
\begin{equation}\label{eq:ineq-2}
\lambda_1(r) \le \sqrt{\frac{\pi}{|G_1|}}=\frac{1}{a\sqrt{r}}.
\end{equation}
The equality holds if and only if $G_1$ is a disk~\cite{Bun}. Figure~\ref{fig:ex_ell_eig1} (right) shows the upper bound in~\eqref{eq:ineq-2} for $1\le r \le 10$.

\begin{figure}[htb] %example_ellipse_r100.m, 
	\centering{
	\scalebox{0.25}{\includegraphics{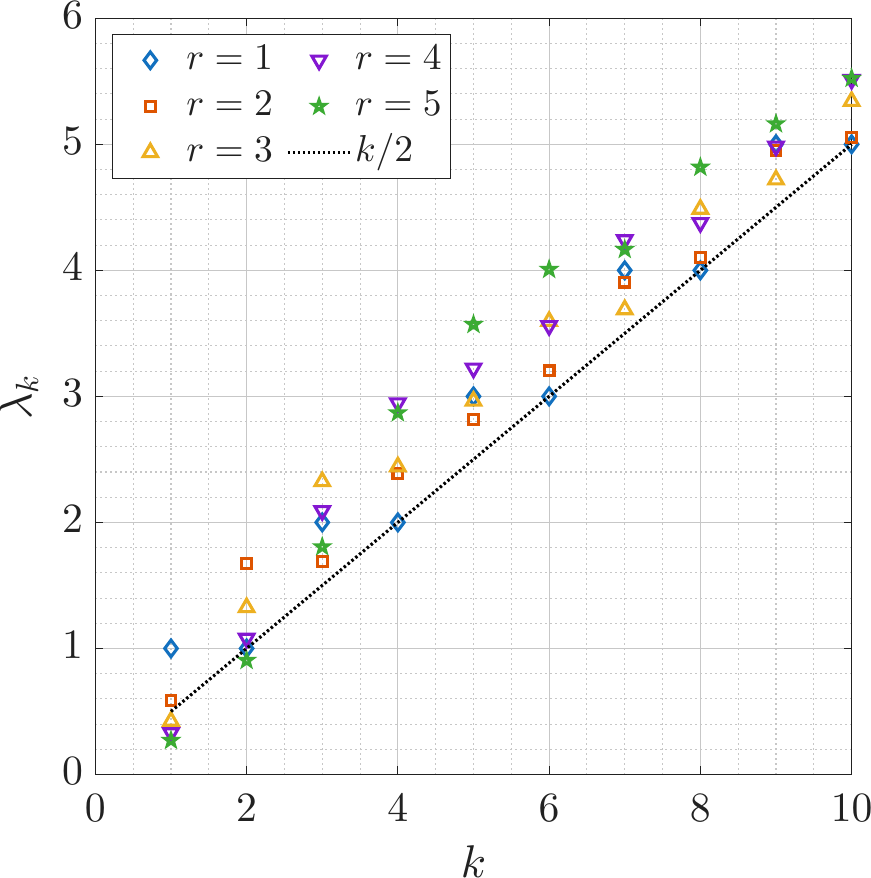}}
	\hfill\scalebox{0.25}{\includegraphics{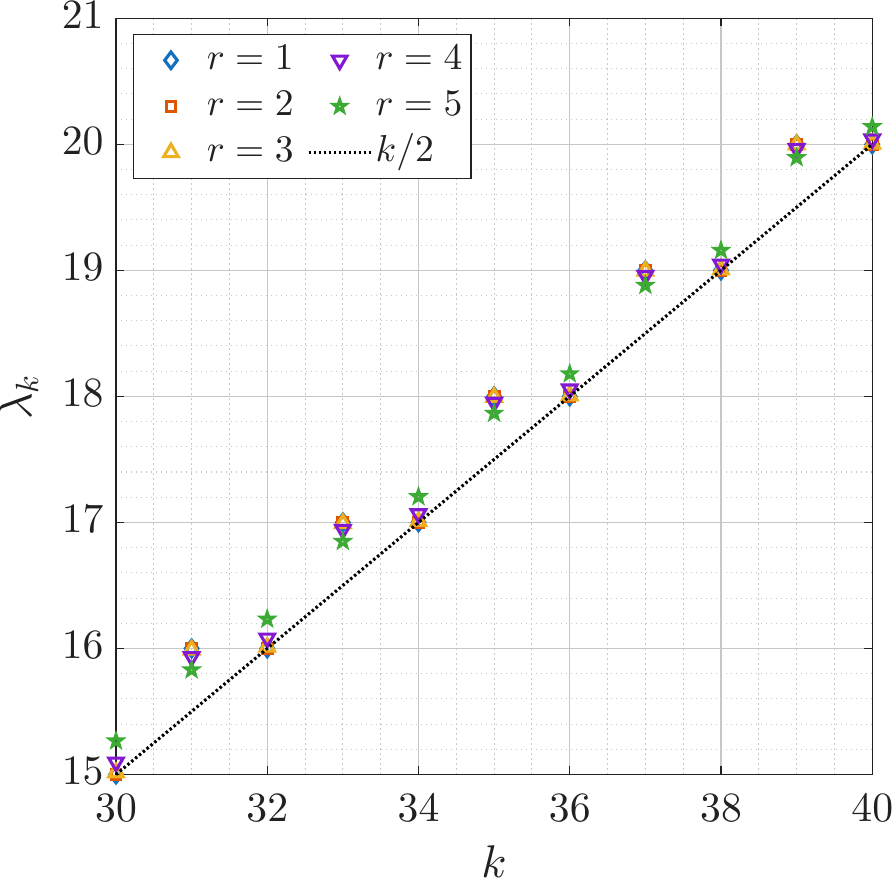}}
	\hfill\scalebox{0.25}{\includegraphics{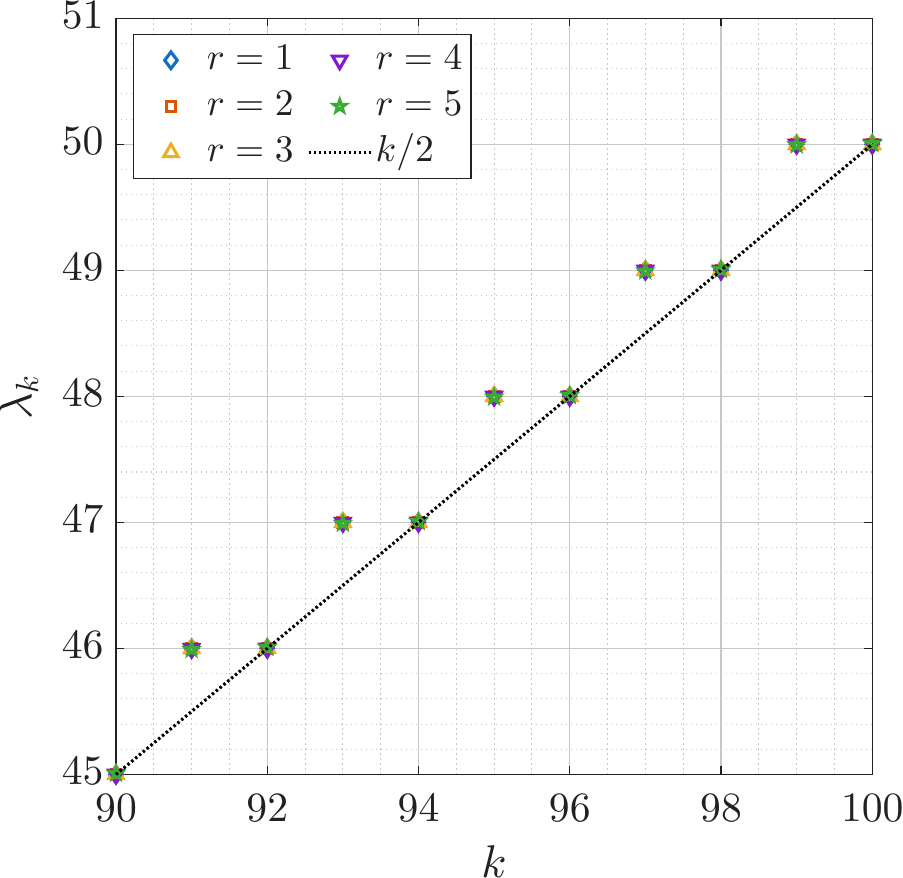}}
    \\
	\scalebox{0.25}{\includegraphics{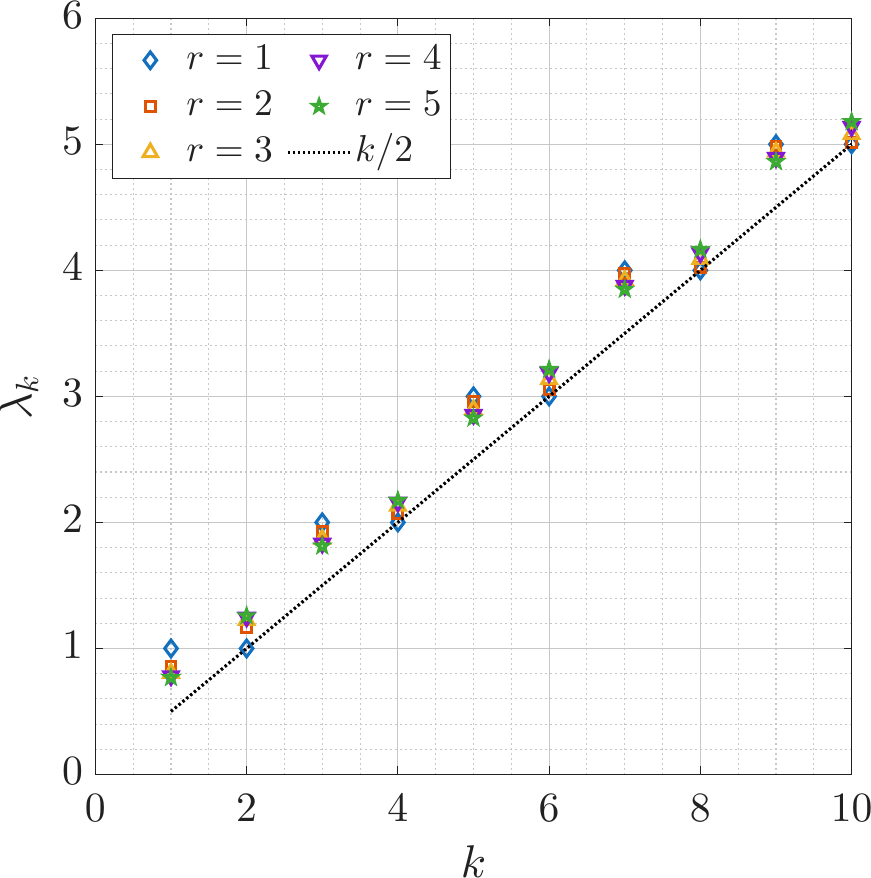}}
	\hfill\scalebox{0.25}{\includegraphics{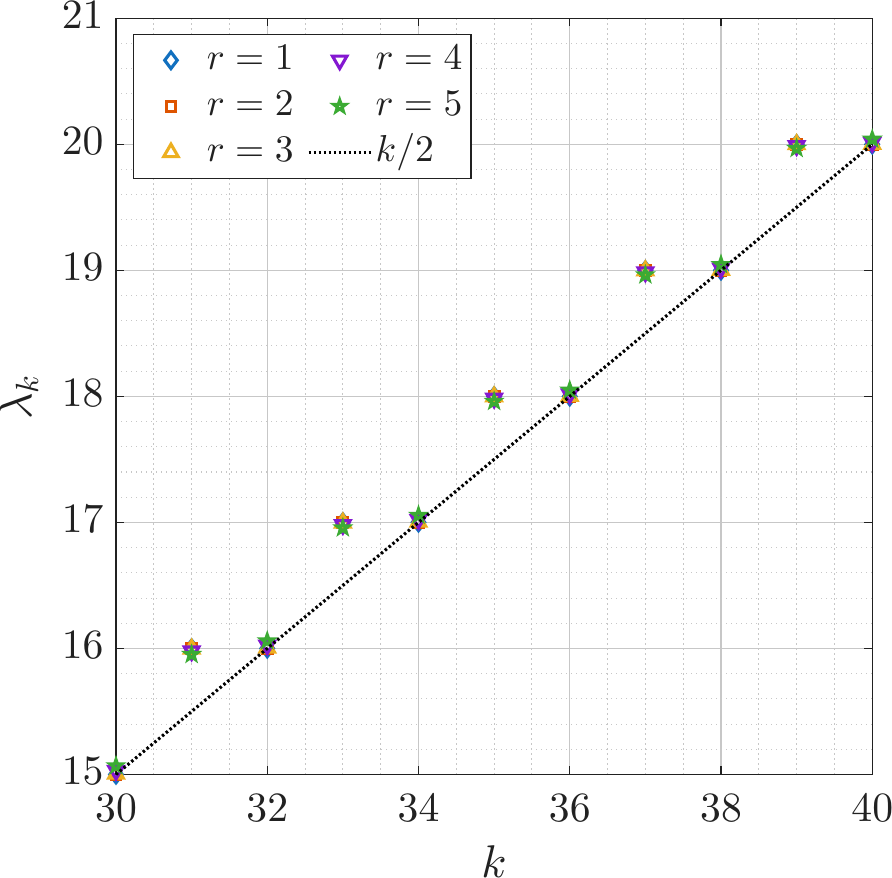}}
	\hfill\scalebox{0.25}{\includegraphics{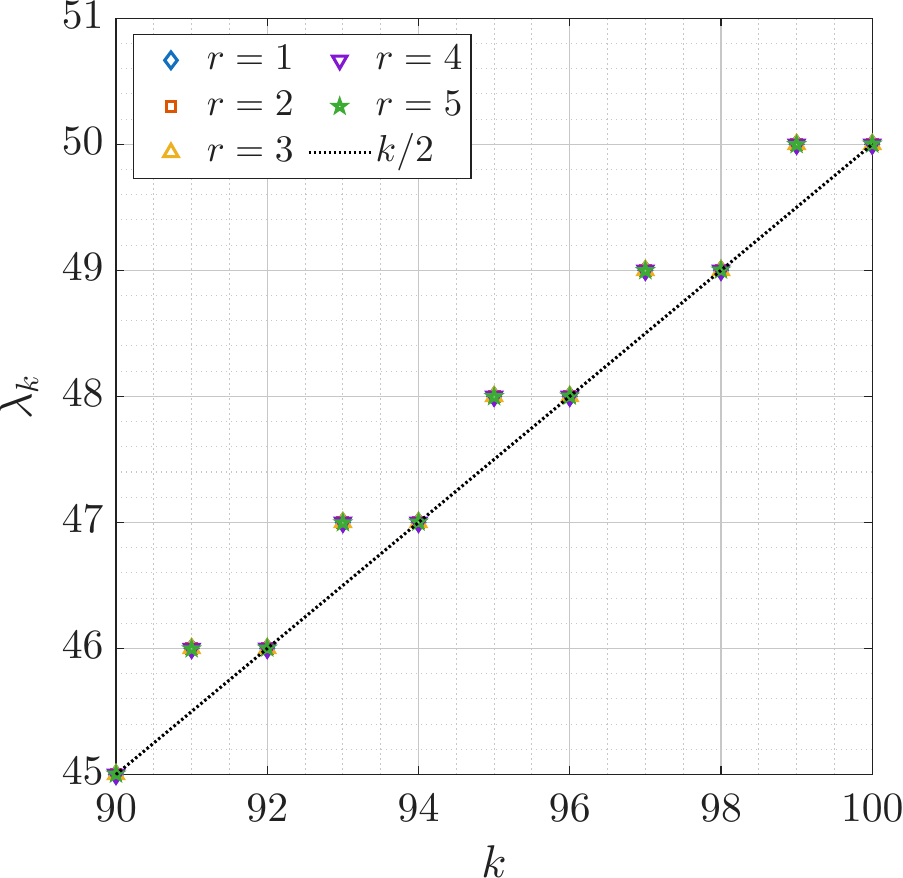}}}
\caption{The eigenvalues $\lambda_k$ for several values of $k$ and $r$ for the bounded domain $G_1$ (first row) and the unbounded domain $G_2$ (second row) in Example~\ref{ex:ell}.}
	\label{fig:ex_ell_eig100}
\end{figure}

\begin{figure}[htb] % example_ellipse_r10.m
	\centering{
    \hfill
		{\includegraphics[scale=0.35]{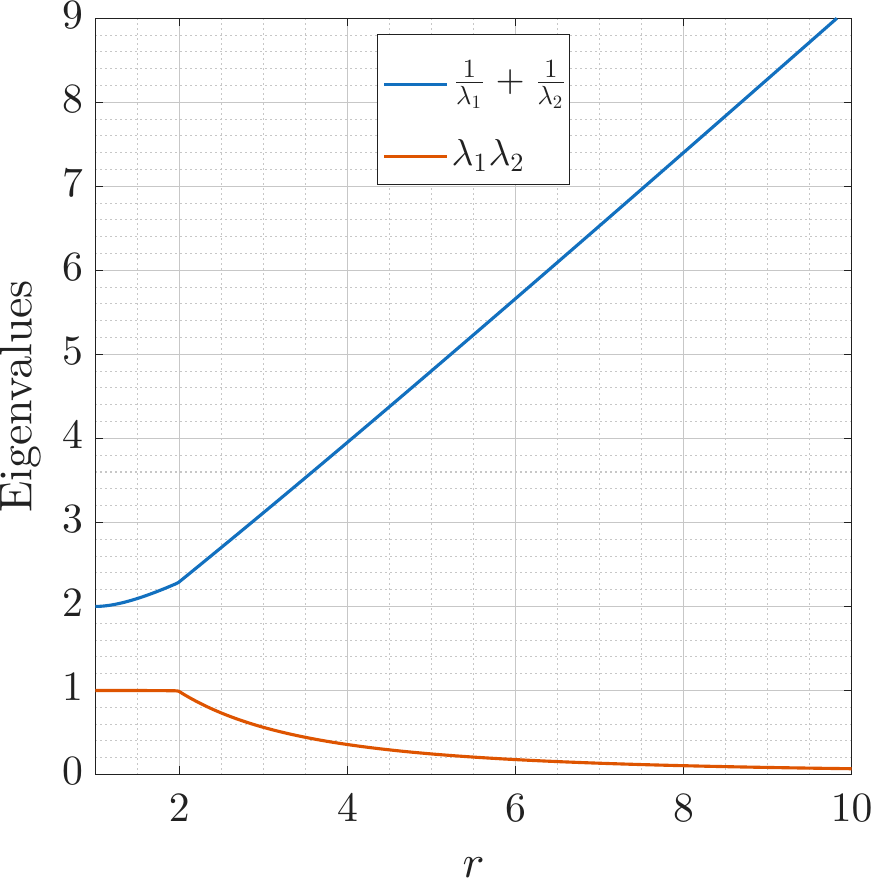}}
		\hfill{\includegraphics[scale=0.35]{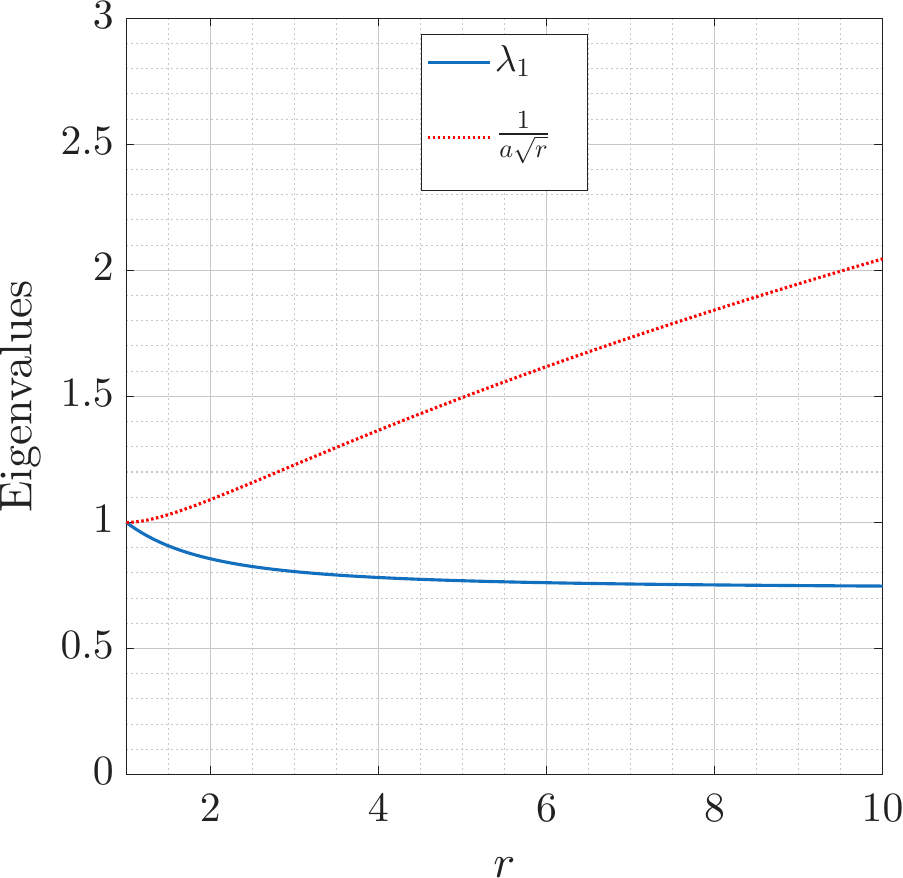}}
    \hfill}
\caption{Verifying the inequalities~\eqref{eq:ineq-1} (left) and~\eqref{eq:ineq-2} (right) numerically for several values of $r$.}
	\label{fig:ex_ell_eig1}
\end{figure}

\begin{example}\label{ex:star2}
Let $\Gamma$ be the curve that is parametrized for the bounded domain $G_1$ by 
\begin{equation}
\eta(t) = a(1 + r \cos 2t)e^{\i t}, \quad 0\le t\le 2\pi, 
\end{equation}
and for the unbounded domain $G_2$ by 
\begin{equation}
\eta(t) = a(1 + r \cos 2t)e^{-\i t}, \quad 0\le t\le 2\pi, 
\end{equation}
with $a>0$ and $0\le r<1$.
As $r$ approaches $1$, the middle of the domain becomes pinched, and at $r=1$
the bounded domain $G_1$ splits into two subdomains.
\end{example}

We again choose $a$ so the length of $\Gamma$ is fixed, i.e., $2\pi=|\Gamma|=aI$ with 
\[
I= \int_0^{2\pi} \sqrt{4r^2 \sin^2 (2t) + (1+r\cos (2t))^2}dt.
\]
Therefore, for a given value of $r$, we choose $a=2\pi/I$, where $I$ is
approximated by the trapezoidal rule.
We study the effect of increasing $r$ on the Steklov eigenvalues and the
corresponding eigenfunctions for $0\le r <1$. We compute the first nonzero
eigenvalues, $0<\lambda_1 \leq \cdots \le \lambda_{10}$, with $n=2^{10}$ when
$0\leq r \leq 0.6$ and with $n=2^{11}$ when $0.6 < r < 1$ for both the bounded
and unbounded domains. These results are presented in
Figure~\ref{fig:ex_star2_eig10}. For selected values of $r$, we show the
eigenmodes of the first $9$ nonzero eigenvalues in Figure~\ref{fig:ex_star2_B}
for the bounded domain $G_1$ and in Figure~\ref{fig:ex_star2_U} for the
unbounded domain $G_2$.

\begin{figure}[ht] %example_star_r10.m, example_ellipse_U_r10.m
	\centering{
	\hfill\scalebox{0.3}{\includegraphics{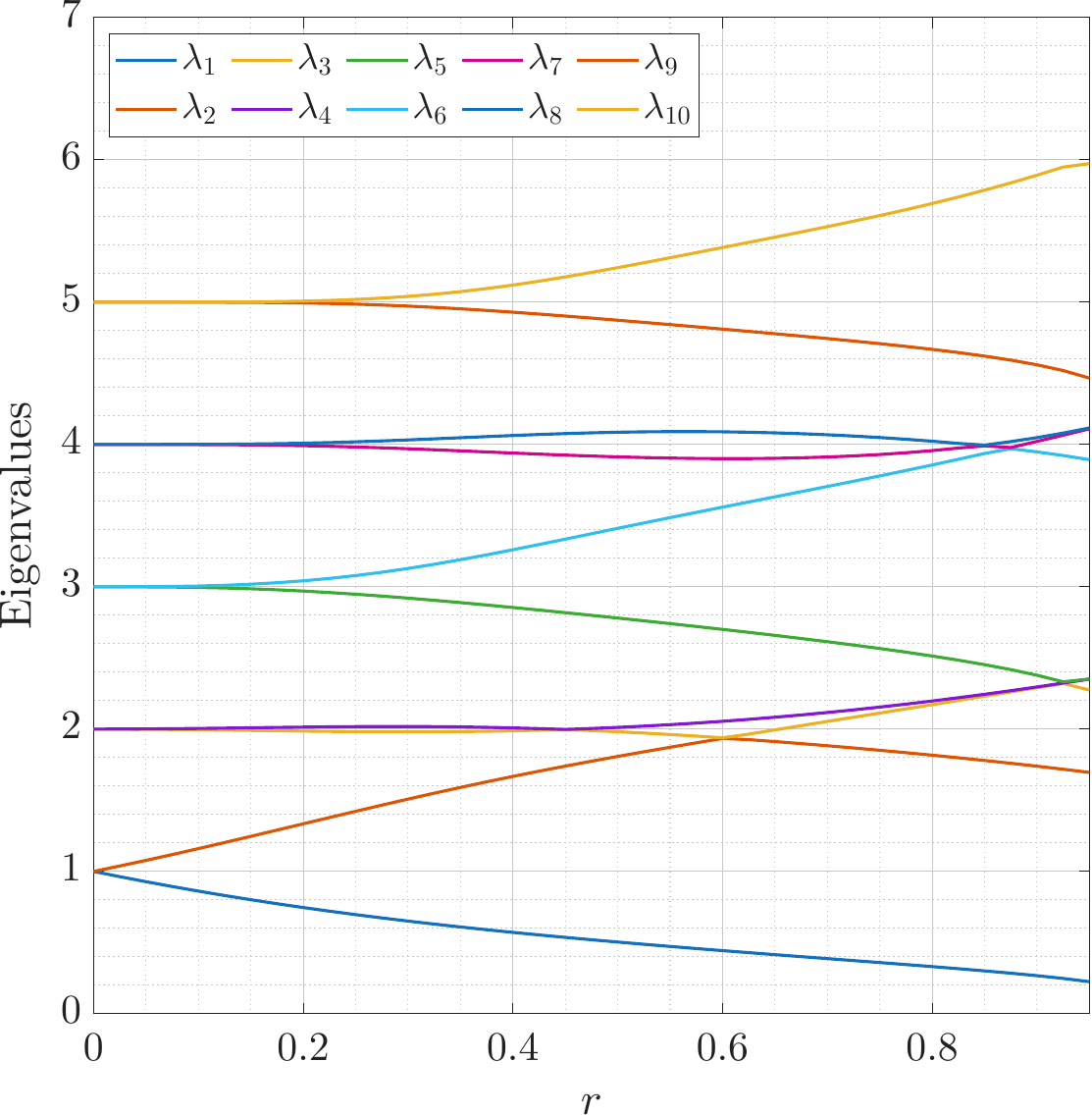}}
	\hfill\scalebox{0.3}{\includegraphics{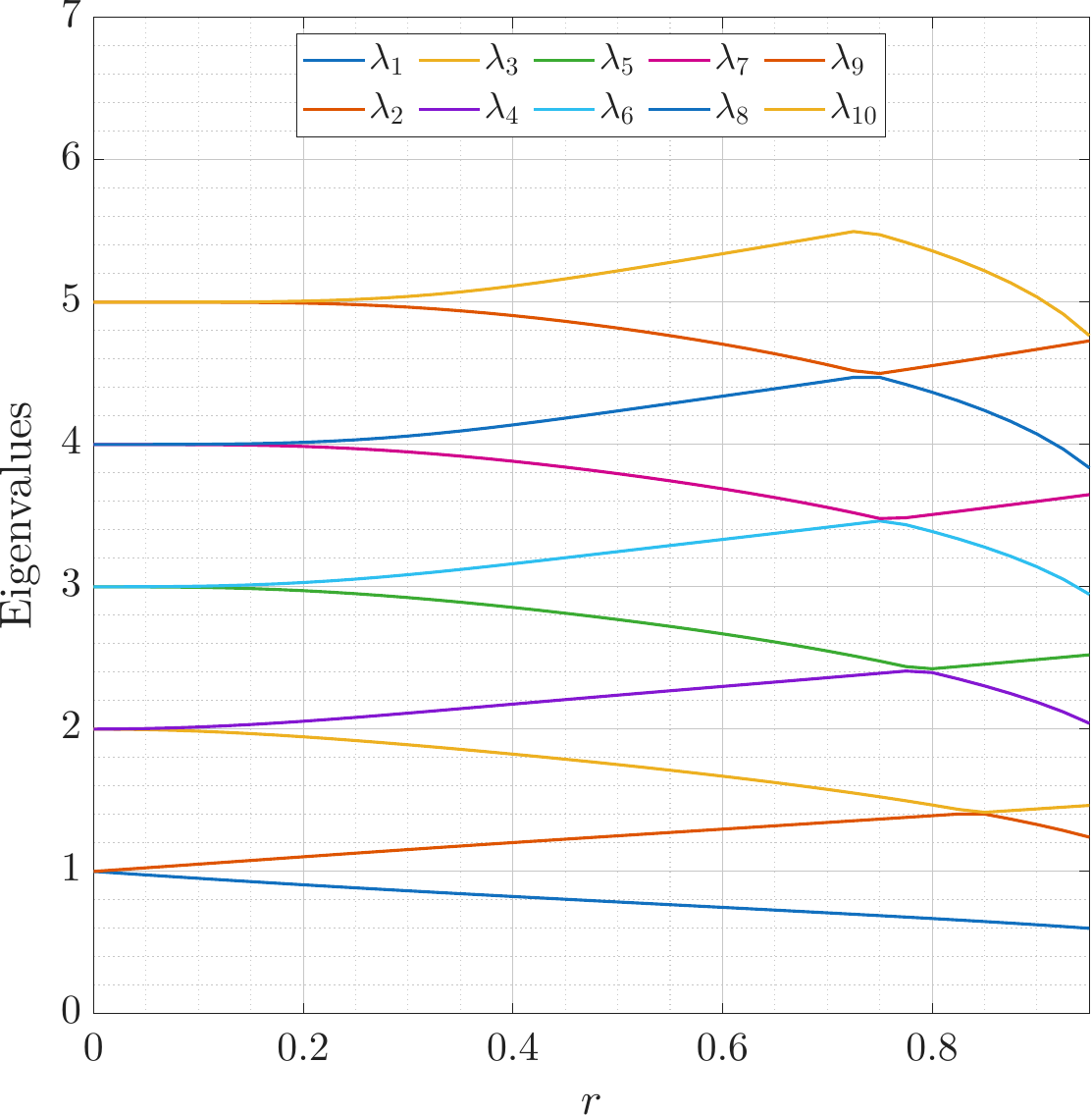}}
    \hfill}
\caption{The first 10 nonzero eigenvalues as functions of $r$ for the bounded domain $G_1$ (left) and the unbounded domain $G_2$ (right) for Example~\ref{ex:star2}}
	\label{fig:ex_star2_eig10}
\end{figure}

\begin{figure} %
	\centering
	\subfloat[$r=0$]{
		\includegraphics[width=0.5\textwidth]{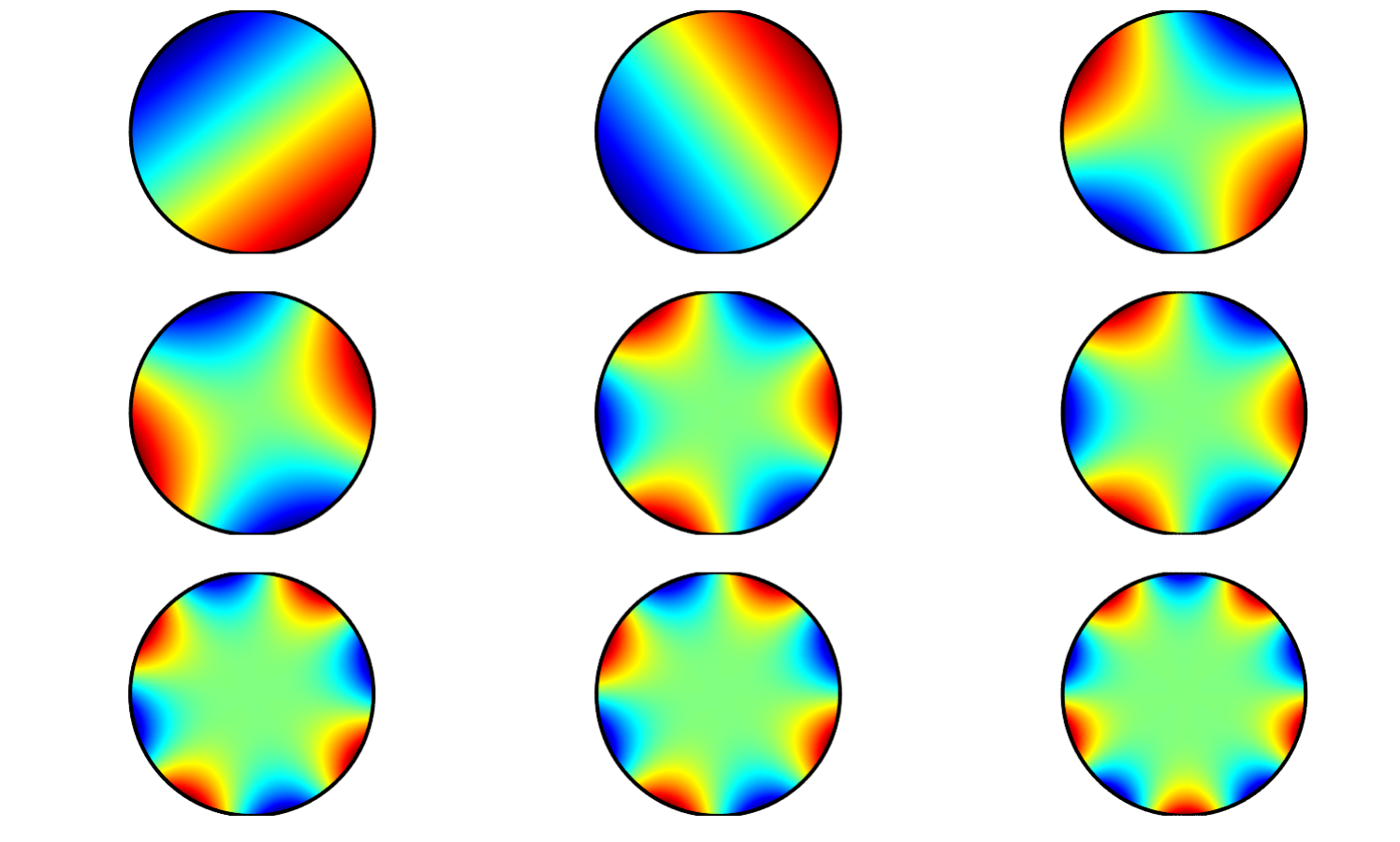}
	}
	\subfloat[$r=0.5$]{
		\includegraphics[width=0.5\textwidth]{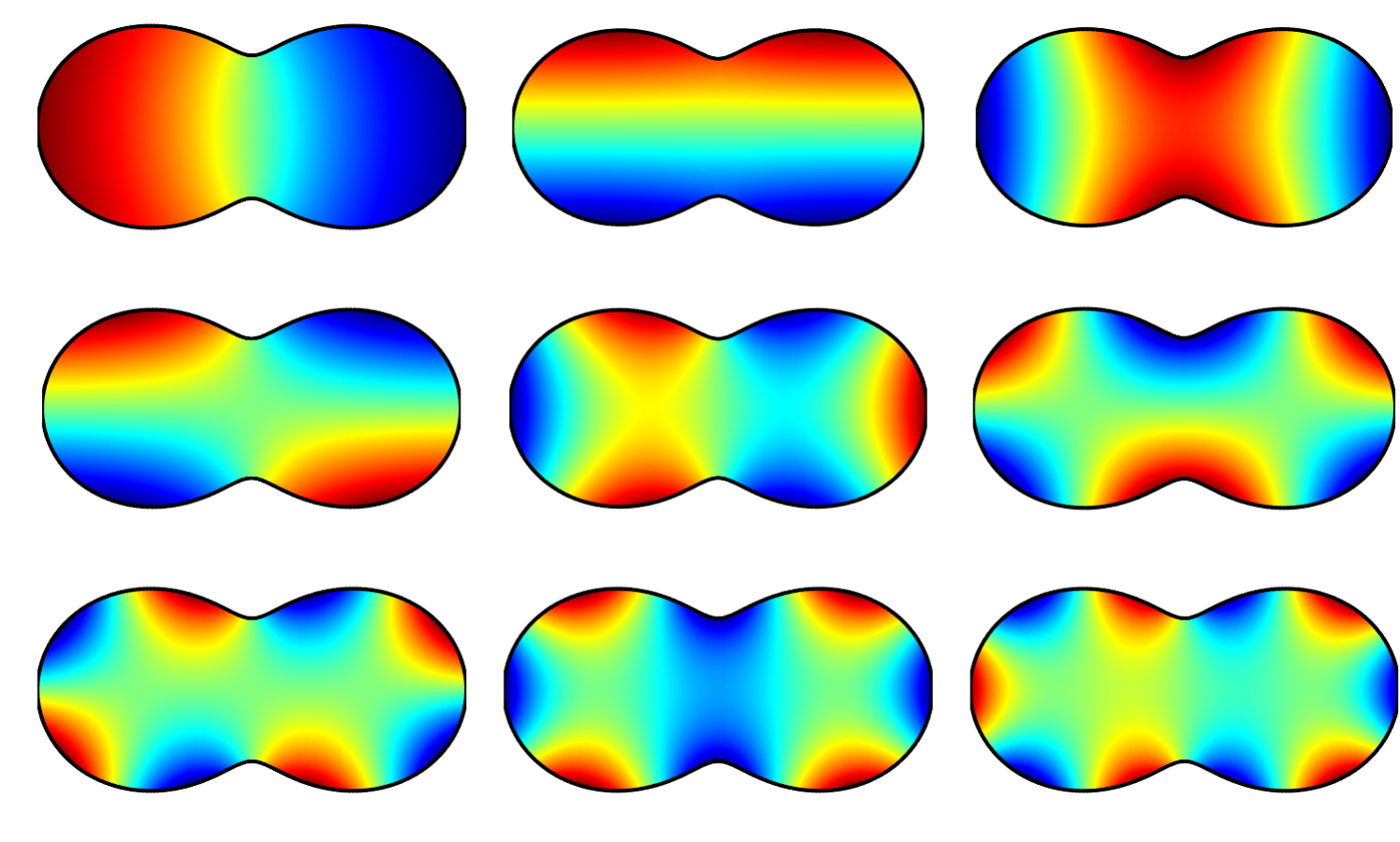}
	}\\
	\subfloat[$r=0.75$]{
		\includegraphics[width=0.5\textwidth]{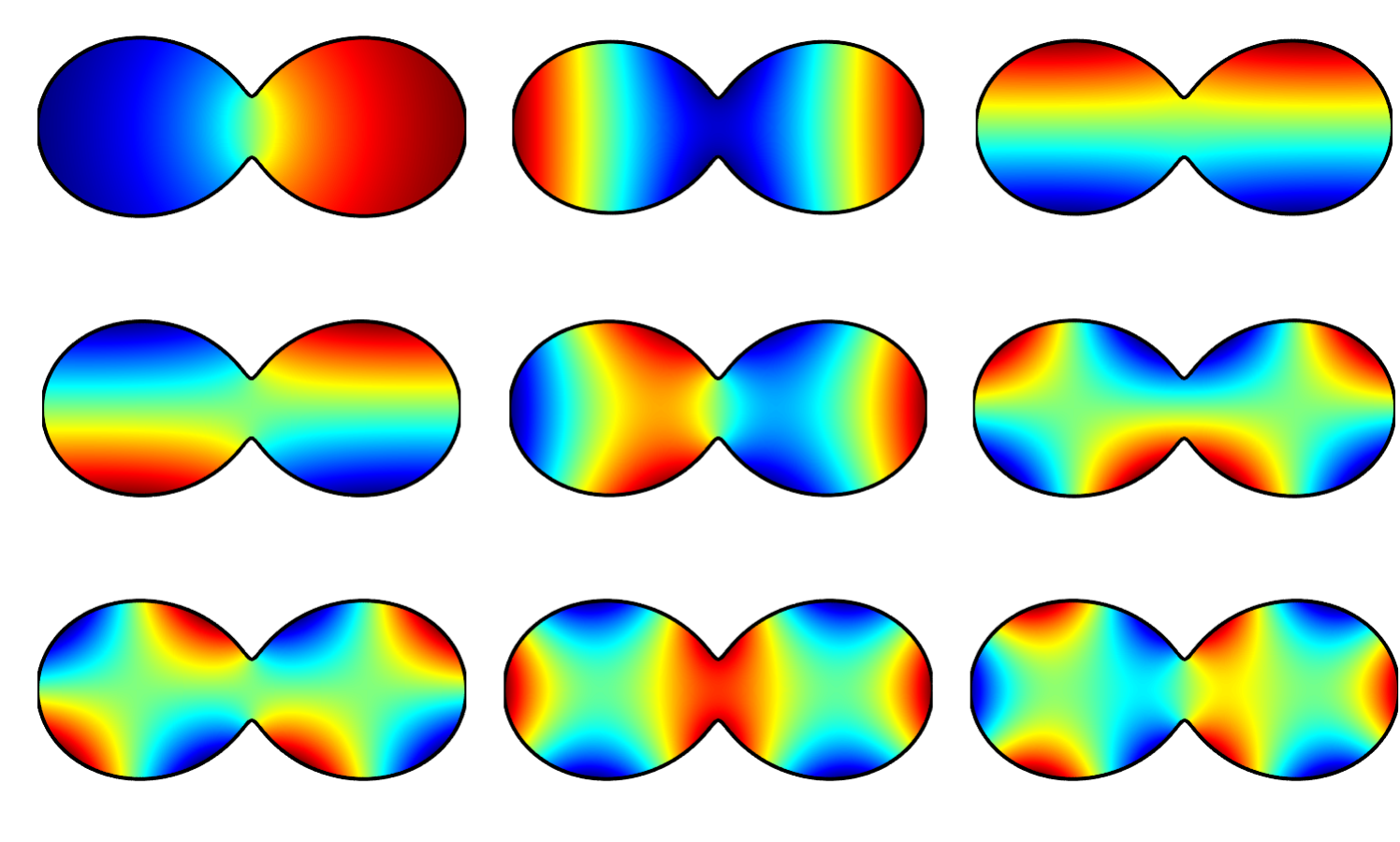}
	}
	\subfloat[$r=0.95$]{
		\includegraphics[width=0.5\textwidth]{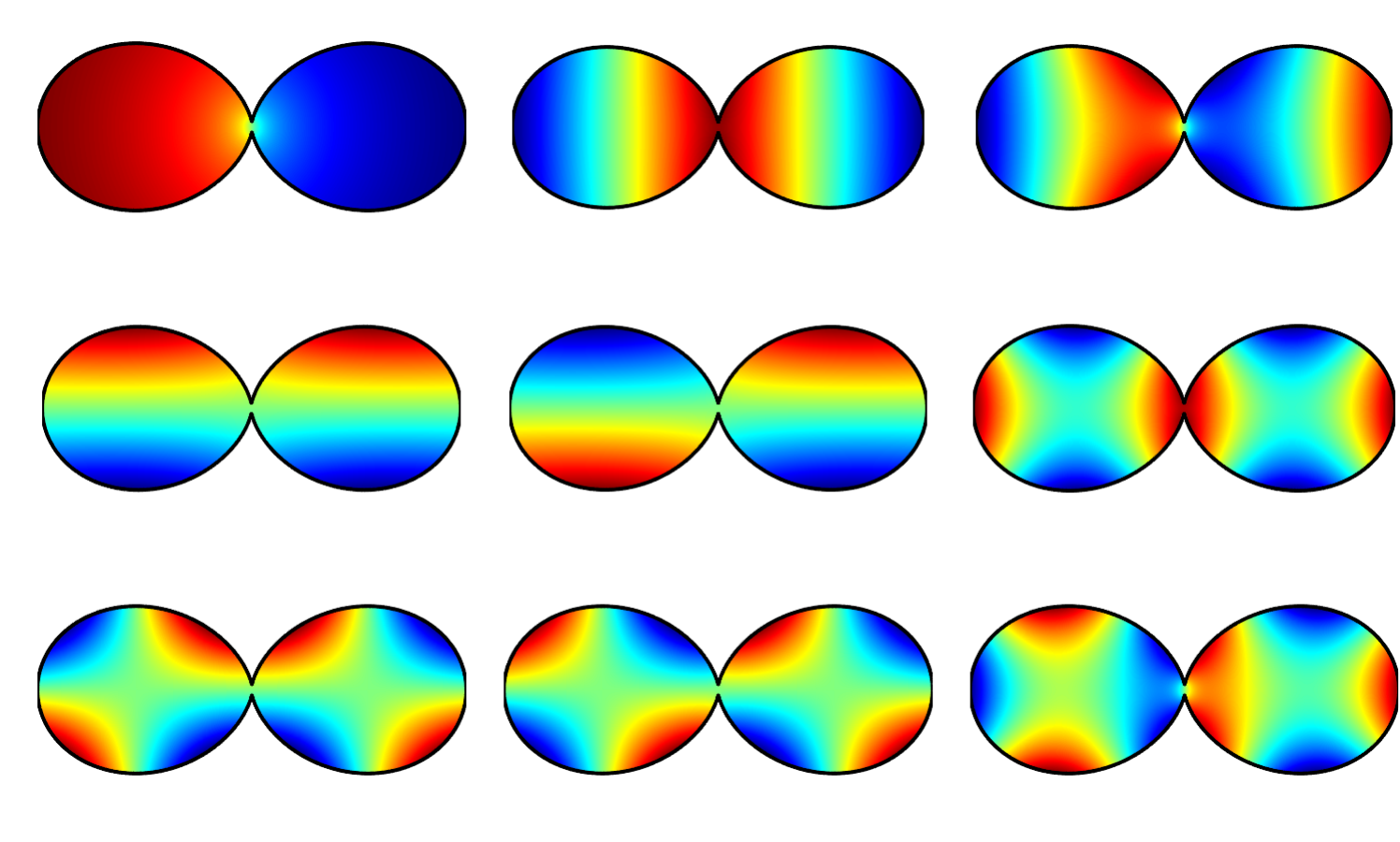}
	}
\caption{The eigenmodes of the first $9$ nonzero eigenvalues of the bounded domain $G_1$ for several values of $r$.}
	\label{fig:ex_star2_B}
\end{figure}

\begin{figure} %
	\centering
	\subfloat[$r=0$]{
		\includegraphics[width=0.5\textwidth]{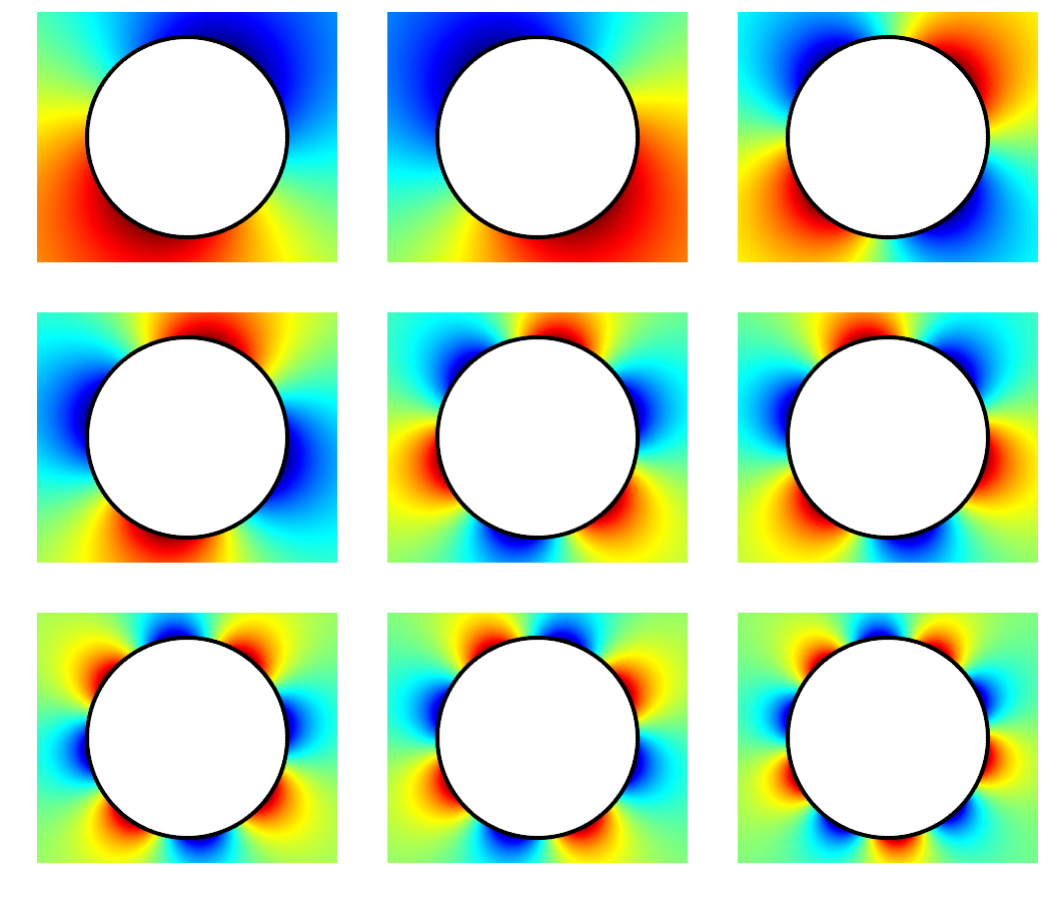}
	}
	\subfloat[$r=0.5$]{
		\includegraphics[width=0.5\textwidth]{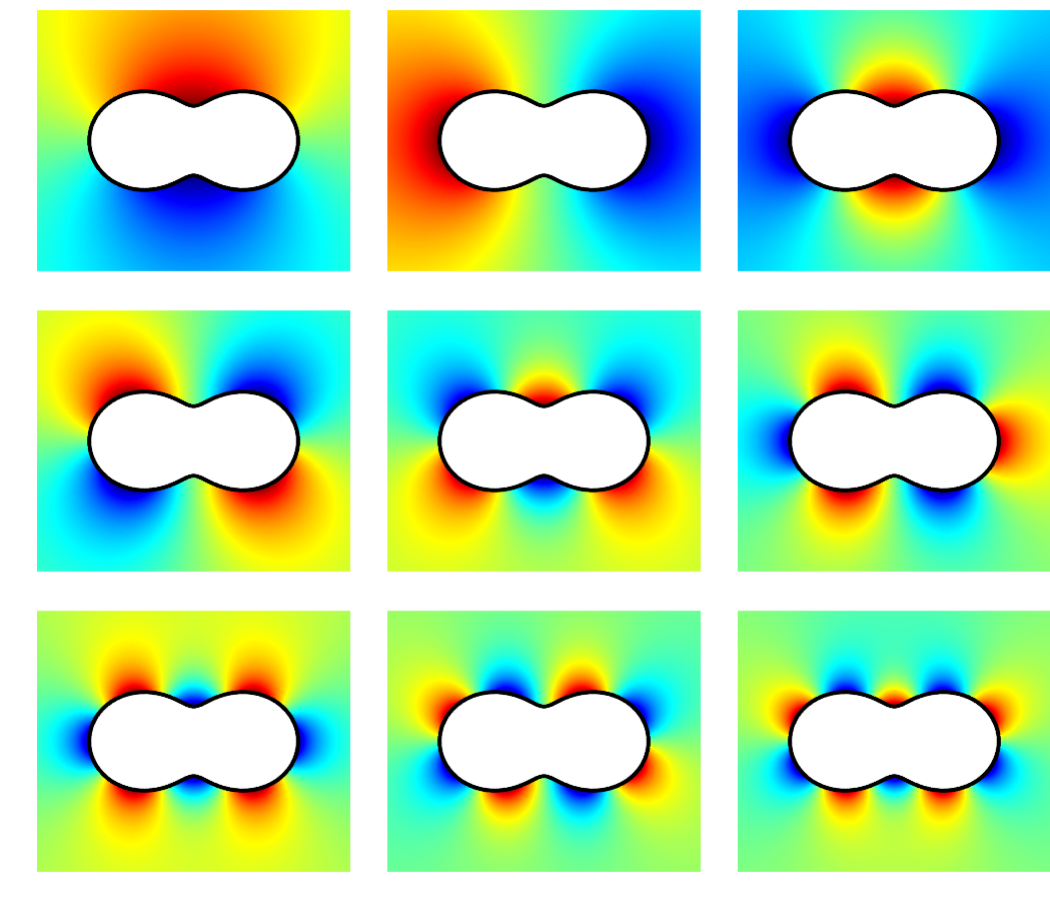}
	}\\
	\subfloat[$r=0.75$]{
		\includegraphics[width=0.5\textwidth]{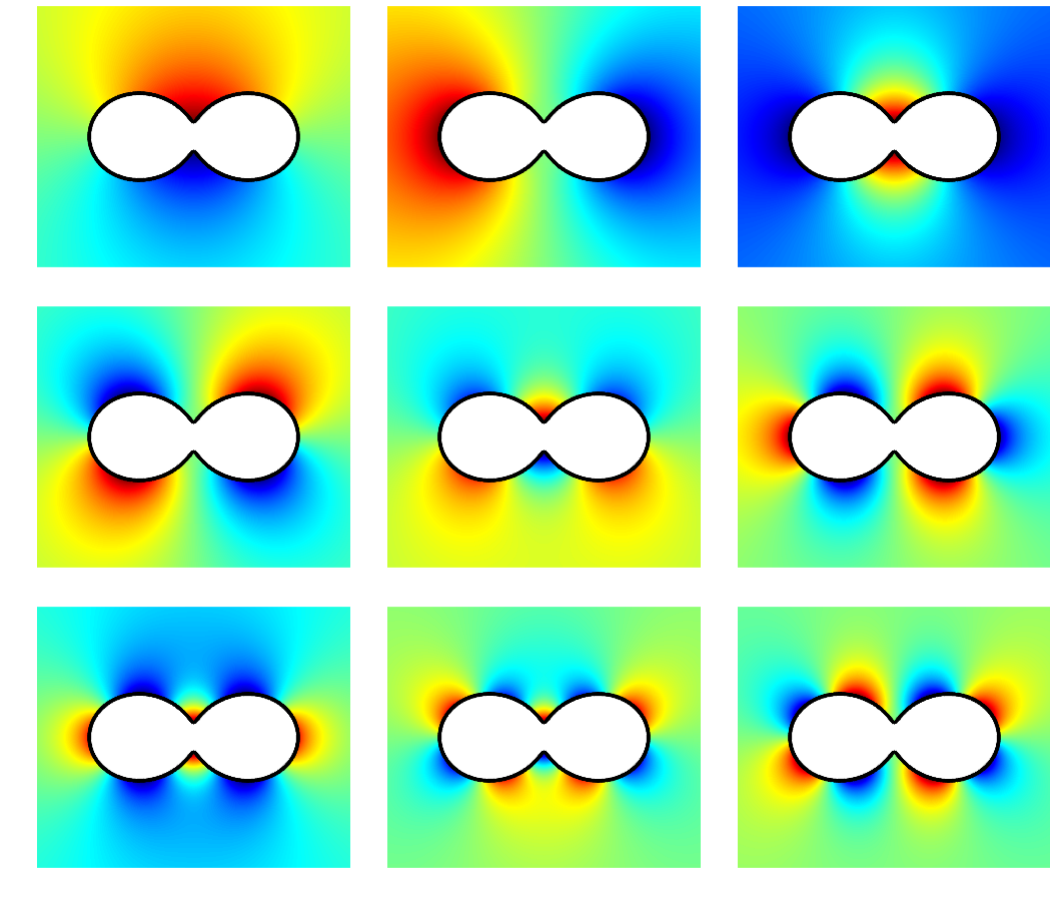}
	}
	\subfloat[$r=0.95$]{
		\includegraphics[width=0.5\textwidth]{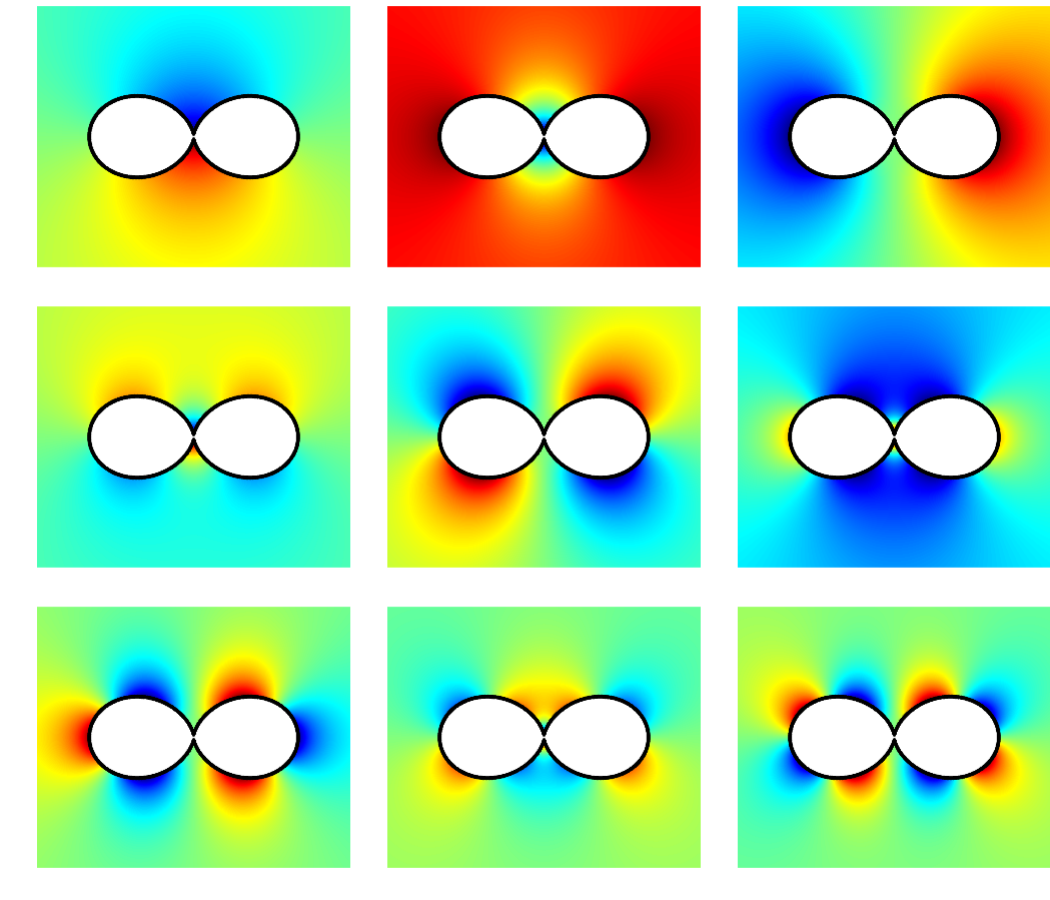}
	}
\caption{The eigenmodes of the first $9$ nonzero eigenvalues of the unbounded domain $G_2$ for several values of $r$.}
	\label{fig:ex_star2_U}
\end{figure}

\subsection{On Computational Complexity}

The dominant computational costs arise from assembling the dense boundary
integral operators, solving the linear systems associated with
$(\bI-\bN)\mu=-\bM\gamma$, and repeatedly applying the resulting operator inside
the eigensolver. If the matrices are formed explicitly on an $n$-point grid,
then assembling the Nystr\"om matrices for $\bN$ and $\bM$ requires $O(n^2)$
work and storage. A direct factorization of the dense matrix $(I-B)$ has
$O(n^3)$ setup cost and $O(n^2)$ cost per solve, while the multiplication by the
Fourier differentiation operator can be implemented either through dense Fourier
matrices or, more efficiently, through FFT-based routines.

In the present MATLAB implementation, the numerical experiments suggest that the
number of {\tt eigs} iterations is relatively stable across the smooth examples
considered here; see Figures~\ref{fig:dom1_time}, \ref{fig:kite_time}, and
\ref{fig:ell_time}. Consequently, the increase in runtime is driven mainly by
the size of the dense boundary system rather than by a deterioration in the
iteration count. Geometry still enters indirectly through the required
resolution: elongated ellipses and nearly pinched star-like curves demand larger
values of $n$, and the conditioning plots show that the matrix $Q+I$ becomes
less favorable as the geometry becomes more challenging.

These observations indicate two natural directions for improvement. First, the
explicit use of dense Fourier matrices can be replaced by FFT-based
implementations. Second, the repeated solution of systems involving $(I-B)$
could be accelerated by fast direct solvers, preconditioning, or matrix
compression. It is also possible to use the fast multipole method (FMM) in solving linear systems involving the matrix $(I-B)$~\cite{Nas-E2,Nas-ETNA}. Such optimizations are not pursued here, since the goal of the
present work is to establish the boundary formulation and document its behavior
on smooth model geometries.

\subsection{Benchmark comparison with an $hp$-FEM solver}\label{sec:hpfemcomp}

As a final independent cross-validation of the proposed BIE method
results, we solve the Steklov eigenvalue problem on the benchmark
domain~$G_1$ from Example~\ref{ex:1} (see Figure~\ref{fig:domains} (left)) with
a high-order $hp$-finite element method (FEM). The standard weak formulation
seeks nontrivial pairs $(u,\lambda)\in H^1(G)\times\mathbb{R}$ satisfying
\begin{equation}\label{eq:hpfem}
\int_{G}\nabla u\cdot\nabla v\,dx
= \lambda\int_{\Gamma}u\,v\,ds
\qquad\forall\,v\in H^1(G).
\end{equation}
To obtain a positive-definite pencil suitable for the Arnoldi iteration we
reformulate~\eqref{eq:hpfem} as the generalized eigenvalue problem
$(K+M_{\partial})\mathbf{v}=\nu\,M_{\partial}\mathbf{v}$,
where $K$ is the stiffness matrix and $M_{\partial}$ is the boundary mass
matrix; the Steklov eigenvalue is then $\lambda=\nu-1$.

The boundary~$\partial G_1$ is discretized into $48$ curved segments that
resolve the exact geometry, and the interior mesh is generated by constrained
Delaunay triangulation. Keeping this mesh fixed, we vary the polynomial degree
$p$ of the element shape functions from~$2$ to~$12$ in order to study
$p$-convergence. The stiffness and boundary mass matrices are assembled in a
standard $hp$-FEM framework~\cite{SzaboBook}, and the resulting generalized
eigenvalue problems are solved with ARPACK.

Table~\ref{tab:hpfem} compares the first ten nonzero area-scaled
eigenvalues $\tilde\lambda_k=\lambda_k\sqrt{|G_1|}$ obtained by the
BIE method (Table~\ref{tab:ex1}, $n=2^{10}$) with those
produced by the $hp$-FEM solver at polynomial degree $p=8$ ($5\,185$
degrees of freedom). All ten eigenvalues agree to at least eight
significant digits, confirming that both discretizations resolve the same
spectral data.

\begin{table}[ht]
\caption{Area-scaled Steklov eigenvalues $\tilde\lambda_k$ for $G_1$:
BIE method ($n=2^{10}$) versus $hp$-FEM ($p=8$,
$48$~boundary segments, $5\,185$~DOFs).}\label{tab:hpfem}
\centering
\begin{tabular}{cccc}
\hline\noalign{\smallskip}
  & BIE method & $hp$-FEM & Rel.\ error \\
\noalign{\smallskip}\hline\noalign{\smallskip}
 $\tilde\lambda_1$    & 1.61465185265077 & 1.61465185392749 & $7.9\times10^{-10}$ \\
 $\tilde\lambda_2$    & 1.61465185265086 & 1.61465185477199 & $1.3\times10^{-9\phantom{0}}$ \\
 $\tilde\lambda_3$    & 2.97737736702950 & 2.97737737349512 & $2.2\times10^{-9\phantom{0}}$ \\
 $\tilde\lambda_4$    & 2.97737736702974 & 2.97737737390843 & $2.3\times10^{-9\phantom{0}}$ \\
 $\tilde\lambda_5$    & 5.48337898612383 & 5.48337898917263 & $5.6\times10^{-10}$ \\
 $\tilde\lambda_6$    & 5.48337898612393 & 5.48337898942869 & $6.0\times10^{-10}$ \\
 $\tilde\lambda_7$    & 6.70773879741621 & 6.70773880665508 & $1.4\times10^{-9\phantom{0}}$ \\
 $\tilde\lambda_8$    & 6.70773879741642 & 6.70773881210991 & $2.2\times10^{-9\phantom{0}}$ \\
 $\tilde\lambda_9$    & 7.65773980917837 & 7.65773983329138 & $3.1\times10^{-9\phantom{0}}$ \\
 $\tilde\lambda_{10}$ & 9.01958292273808 & 9.01958292298226 & $2.7\times10^{-11}$ \\
\noalign{\smallskip}\hline
\end{tabular}
\end{table}

Table~\ref{tab:pconv} records the $p$-convergence of
$\tilde\lambda_1$ on the fixed 48-segment mesh as the polynomial degree
increases from~$2$ to~$12$. The relative error decreases from
$2.3\times10^{-4}$ at $p=2$ to $6.8\times10^{-13}$ at $p=12$,
exhibiting the exponential (spectral) convergence rate expected for
$hp$-FEM on smooth domains~\cite{SzaboBook}. At $p=12$ the $hp$-FEM
eigenvalue carries roughly thirteen correct digits, providing an
additional reference value against which the BIE approximation can be assessed.

The comparison in this subsection is intended as an independent
benchmark validation on the geometry $G_1$, rather than as a full efficiency
study between boundary-only and volumetric discretizations. A broader comparison
would include accuracy-versus-cost curves for both methods and, for near-multiple
eigenvalues, a comparison of the associated eigenspaces rather than only the
ordered eigenvalues.

\begin{table}[ht]
\caption{$p$-convergence of the first area-scaled Steklov eigenvalue
$\tilde\lambda_1$ for $G_1$ on a fixed mesh of $48$~boundary
segments.}\label{tab:pconv}
\centering
\begin{tabular}{cccc}
\hline\noalign{\smallskip}
 $p$ & DOFs & $\tilde\lambda_1$ & Rel.\ error \\
\noalign{\smallskip}\hline\noalign{\smallskip}
  2 &    361  & 1.61502095635610 & $2.3\times10^{-4}$  \\
  4 &  1\,345 & 1.61465498408775 & $1.9\times10^{-6}$  \\
  6 &  2\,953 & 1.61465190864775 & $3.5\times10^{-8}$  \\
  8 &  5\,185 & 1.61465185392747 & $7.9\times10^{-10}$ \\
 10 &  8\,041 & 1.61465185268465 & $2.1\times10^{-11}$ \\
 12 & 11\,521 & 1.61465185265191 & $6.8\times10^{-13}$ \\
\noalign{\smallskip}\hline
\end{tabular}
\end{table}

The close agreement of two fundamentally different
discretizations---boundary-only versus volume-based---on the same test
domain provides an independent validation of the BIE results on this benchmark
geometry and supports the high accuracy of the reported eigenvalues.
In this smooth-boundary setting, the additional complexity of volumetric meshing tilts the balance in favor of the boundary-only BIE formulation.

\section{Conclusion}\label{sec:con}

This paper asked whether the Steklov spectrum of smooth simply connected planar
domains can be computed accurately by a boundary-only formulation built from
harmonic conjugation. The answer provided by the present analysis and
computations is affirmative. For the unit disk, the method recovers the
classical Steklov structure through the conjugation operator $\bK$. For general
bounded and unbounded simply connected domains, the generalized conjugation
operator $\bE$ leads to a concrete algebraic eigenvalue problem for the boundary
traces of Steklov eigenfunctions.

The numerical examples show that the method performs well on a range of smooth
interior and exterior geometries and captures both benchmark spectra and
parameter-dependent spectral behavior. In particular, the examples illustrate
how the Steklov spectrum changes under smooth deformations of the boundary, and
they show that the same framework can be used for bounded and unbounded
problems.

Several natural directions remain open. On the analytical side, it would be
useful to complement the present formulation with a sharper convergence analysis
for the eigenvalue discretization. On the computational side, faster linear
algebra and compression strategies would make the method more effective for
larger systems and more demanding geometries. 
These questions provide a natural next step for extending the present study to multiply connected domains. For multiply connected domains of high connectivity, the size of the matrices in the algebraic eigenvalue problem will be too large to be handled on a standard computer. Hence, using fast methods is unavoidable. In such a case, using the FFT and FMM will reduce the computational cost of the method. It will also reduce the memory requirement as the explicit forms of these matrices and saving them in the memory will not be required.

\medskip
\noindent{\bf Data availability.} The MATLAB codes for the numerical experiments presented in this paper are available at ~\url{https://github.com/mmsnasser/steklov}.

\appendix

\section{The matrix $D$}

For a given even integer $n$, we define the equidistant points $t_j = (j-1)\frac{2\pi}{n}$ for $j=1,2,\ldots,n$. Let $\gamma(t)$ be a given real Hölder continuous $2\pi$-periodic function. We approximate $\gamma(t)$ by a trigonometric interpolating polynomial
\begin{equation}\label{eq:gamma}
    \gamma (t) \approx a_0 + \sum_{k=1}^{\frac{n}{2}} a_k e^{\i kt} + \sum_{k=-\frac{n}{2}+1}^{-1} a_ke^{\i kt}
\end{equation}
with unknown constants $a_{-\frac{n}{2}+1}, \ldots, a_{-1},a_0, a_1, .., a_{\frac{n}{2}}$.
These unknown constants are chosen such that
\begin{equation}\label{eq:gamma2}
\gamma (t_j) = a_0 + \sum_{k=1}^{\frac{n}{2}} a_k e^{\i kt_j} + \sum_{k=-\frac{n}{2}+1}^{-1} a_ke^{\i kt_j}, 
\quad j=1,\ldots,n.
\end{equation}
Since $e^{\i n t_j}=1$ for $j=1,...n$, the second sum in~\eqref{eq:gamma2} can be rewritten as
\[
    \sum^{-1}_{k=-\frac{n}{2}+1} a_ke^{\i kt_j} 
		= \sum^{n-1}_{k=\frac{n}{2}+1} a_{k-n}e^{\i (k-n)t_j} 
		= \sum^{n-1}_{k=\frac{n}{2}+1} a_{k-n}e^{\i kt_j},
\]
which implies that equation \eqref{eq:gamma2} can be rewritten as
\begin{equation}\label{eq:gamma2b}
\gamma (t_j) = a_0 + \sum_{k=1}^{\frac{n}{2}} a_k e^{\i kt_j} + \sum^{n-1}_{k=\frac{n}{2}+1} a_{k-n}e^{\i kt_j}, 
\quad j=1,\ldots,n.
\end{equation}

Define the $n$ constants $b_k$, $k=1,\ldots,n$, by $b_1=\overline{a_0}$, $b_{k} = \overline{a_{k-1}}$ for $k=2,\ldots,n/2+1$, and $b_{k} = \overline{a_{k-1-n}}$ for $k=n/2+2,\ldots,n$.
Since $\gamma(t)$ is a real function, then
\[
\gamma(t_j) = \sum_{k=1}^{n} \overline{b_k} e^{\i (k-1)t_j} 
= \sum_{k=1}^{n} b_{k} e^{-\i (k-1)t_j},
\]
which can be rewritten as
\begin{equation}\label{eq:gamma_C3}
    \gamma (t_j) =  \sum_{k=1}^{n} b_k e^{-\i (k-1)(j-1)\frac{2\pi}{n}} 
		= \sum_{k=1}^{n} b_k \omega_n^{(k-1)(j-1)}, \quad j=1,\ldots,n,
\end{equation}
where $\omega_n = e^{-\i \frac{2\pi}{n}}$.
Equation~\eqref{eq:gamma_C3} can be written in matrix form as $\gamma(\bt)=F\bvb$ where $F$ is the Fourier matrix with entries $(F)_{kj}=\omega_n^{(k-1)(j-1)}$, $k,j=1,\ldots,n$, and $\bvb=(b_1,\ldots,b_n)^T$. Hence,
\begin{equation}\label{eq: matrix_C}
\bvb = F^{-1}\gamma(\bt).
\end{equation}
By obtaining the vector $\bvb$, we obtain the unknown constants $a_{-\frac{n}{2}+1}, \ldots, a_{-1},a_0, a_1, .., a_{\frac{n}{2}}$.

For the function $\gamma(t)$ in~\eqref{eq:gamma}, the derivative $\bD\gamma(t)$ is approximated by the trigonometric interpolating polynomial
\begin{equation}\label{eq:gamma'}
    \bD\gamma(t) = \gamma' (t) \approx  \sum_{k=1}^{\frac{n}{2}-1} a_k \i ke^{\i kt} + \sum_{k=-\frac{n}{2}+1}^{-1} a_k \i ke^{\i kt}.
\end{equation}
For $t=t_j$, $j=1,\ldots,n$, we have
\begin{equation}\label{eq:gamma-p}
    \bD\gamma(t_j) = \sum_{k=1}^{\frac{n}{2}-1} a_k \i ke^{\i kt_j} 
		+ \sum_{k=-\frac{n}{2}+1}^{-1} a_k \i ke^{\i kt_j}.
\end{equation}
The second sum term of equation \eqref{eq:gamma-p} can be written as
\[
    \sum^{-1}_{k=-\frac{n}{2}+1} \i ka_ke^{\i kt_j} 
		= \sum^{n-1}_{k=\frac{n}{2}+1} \i (k-n)a_{k-n}e^{\i (k-n)t_j} 
		= \sum^{n-1}_{k=\frac{n}{2}+1} \i (k-n)a_{k-n}e^{\i kt_j}, 
\]
and then~\eqref{eq:gamma-p} become
\begin{equation}\label{eq: D_gamma1}
\bD \gamma (t_j) =  \sum_{k=1}^{\frac{n}{2}-1} \i ka_k e^{\i kt_j} + \sum^{n-1}_{k=\frac{n}{2}+1} \i(k-n)a_{k-n}e^{\i kt_j},
\quad j=1,\ldots,n.
\end{equation}

Define the $n$ constants $c_k$, $k=1,\ldots,n$, by $c_1=0$, $c_{k} = \overline{\i (k-1)a_{k-1}}$ for $k=2,\ldots,n/2$, $c_{n/2+1}=0$, and $c_{k} = \overline{\i(k-1-n)a_{k-1-n}}$ for $k=n/2+2,\ldots,n$.
Since $\bD\gamma(t)$ is a real function, then~\eqref{eq: D_gamma1} can be written as
\[
     \bD \gamma (t_j) =  \sum_{k=1}^{n} \overline{c_{k}}e^{\i (k-1)t_j} 
		=  \sum_{k=1}^{n} c_{k}e^{-\i (k-1)t_j}
		=  \sum_{k=1}^{n} c_{k}\omega_n^{(k-1)(j-1)},
		\quad j=1,\ldots,n,
\]
which can be written in matrix form as $\bD\gamma(\bt)=F\bc$ where $\bc=(c_1,\ldots,c_n)^T$.
Note that $c_1=0\cdot b_1$, $c_{k} = -\i (k-1)b_{k-1}$ for $k=2,\ldots,n/2$, $c_{n/2+1}=0\cdot b_{n/2+1}$, and $c_{k} = -\i(k-1-n)b_{k-1-n}$ for $k=n/2+2,\ldots,n$. 
Hence $\bc=\hat{W}\bvb$ with the $n\times n$ matrix 
\[
\hat W= -\i \left[\begin{array}{cccc}
	~~0~~   & ~~0~~    &  ~~0~~   & 0  \\
	 0  & R  & 0  & 0  \\
	0    &0 & 0 & 0 \\
    0  & 0 & 0 &  -JRJ    \\
\end{array}\right]
\]
with the $(n/2-1)\times(n/2-1)$ matrices 
\[
R=\left[\begin{array}{cccc}
	~~1~~   & ~~0~~    &\cdots &0     \\
	0   & 2    &\cdots &0     \\
	\vdots     &\vdots      &\ddots &\vdots   \\
	0          & 0   &\cdots &n/2-1   \\
\end{array}\right], \quad
J=\left[\begin{array}{cccc}
~0~      &\cdots &~0~  &~1~   \\
0      &\cdots &1  &0     \\
\vdots        &\iddots  &\vdots    &\vdots   \\
1             &\cdots &0 &0   \\
\end{array}\right].
\]
Thus, $\bD\gamma(\bt)=F\bc=F\hat W\bvb=F\hat WF^{-1}\gamma(\bt)$. 
Since $F^{-1} = (1/n)F^\ast$ where $F^\ast$ is the Hermitian transpose of $F$, we have $\bD\gamma(\bt)=FWF^\ast\gamma(\bt)$ where the matrix $W$ is defined by
\[
W = \frac{1}{n}\hat W.
\] 
Thus the differentiation operator $\bD$ is discretized by the matrix
\begin{equation}
D = F WF^\ast.
\end{equation}

\end{document}